\newtheorem{thm}{Theorem}[section]
\newtheorem{lem}[thm]{Lemma}
\newtheorem{cor}[thm]{Corollary}
\newtheorem{prop}[thm]{Proposition}
\newtheorem{exmp}[thm]{Example}
\newtheorem{algo}[thm]{Algorithm}
\newtheorem{rmk}[thm]{Remark}
\newtheorem{ques}[thm]{Question}
\numberwithin{equation}{section}
\theoremstyle{definition}
\newtheorem{defn}[thm]{Definition}
  \let\fiverm\fivrm
\def\@picture(#1,#2)(#3,#4){%
  \@picht #2\unitlength
  \setbox\@picbox\hbox to #1\unitlength\bgroup 
  \let\endpicture=\!latexendpicture
  \let\frame=\!latexframe
  \let\linethickness=\!latexlinethickness
  \let\multiput=\!latexmultiput
  \let\put=\!latexput
  \hskip -#3\unitlength \lower #4\unitlength \hbox\bgroup}
\font\fiverm=cmr5
\def\PiC{P\kern-.12em\lower.5ex\hbox{I}\kern-.075emC}
\def\PiCTeX{\PiC\kern-.11em\TeX}
\def\!ifnextchar#1#2#3{%
  \let\!testchar=#1%
  \def\!first{#2}%
  \def\!second{#3}%
  \futurelet\!nextchar\!testnext}
\def\!testnext{%
  \ifx \!nextchar \!spacetoken 
    \let\!next=\!skipspacetestagain
  \else
    \ifx \!nextchar \!testchar
      \let\!next=\!first
    \else 
      \let\!next=\!second 
    \fi 
  \fi
  \!next}
\def\\{\!skipspacetestagain} 
\def\\ {\futurelet\!nextchar\!testnext} 
\def\\{\let\!spacetoken= } \\  %  ** set \spacetoken to a space token
\def\!tfor#1:=#2\do#3{%
  \edef\!fortemp{#2}%
  \ifx\!fortemp\!empty 
    \else
    \!tforloop#2\!nil\!nil\!!#1{#3}%
  \fi}
\def\!tforloop#1#2\!!#3#4{%
  \def#3{#1}%
  \ifx #3\!nnil
    \let\!nextwhile=\!fornoop
  \else
    #4\relax
    \let\!nextwhile=\!tforloop
  \fi 
  \!nextwhile#2\!!#3{#4}}
\def\!etfor#1:=#2\do#3{%
  \def\!!tfor{\!tfor#1:=}%
  \edef\!!!tfor{#2}%
  \expandafter\!!tfor\!!!tfor\do{#3}}
\def\!cfor#1:=#2\do#3{%
  \edef\!fortemp{#2}%
  \ifx\!fortemp\!empty 
  \else
    \!cforloop#2,\!nil,\!nil\!!#1{#3}%
  \fi}
\def\!cforloop#1,#2\!!#3#4{%
  \def#3{#1}%
  \ifx #3\!nnil
    \let\!nextwhile=\!fornoop 
  \else
    #4\relax
    \let\!nextwhile=\!cforloop
  \fi
  \!nextwhile#2\!!#3{#4}}
\def\!ecfor#1:=#2\do#3{%
  \def\!!cfor{\!cfor#1:=}%
  \edef\!!!cfor{#2}%
  \expandafter\!!cfor\!!!cfor\do{#3}}
\def\!empty{}
\def\!nnil{\!nil}
\def\!fornoop#1\!!#2#3{}
\def\!ifempty#1#2#3{%
  \edef\!emptyarg{#1}%
  \ifx\!emptyarg\!empty
    #2%
  \else
    #3%
  \fi}
\def\!getnext#1\from#2{%
  \expandafter\!gnext#2\!#1#2}%
\def\!gnext\\#1#2\!#3#4{%
  \def#3{#1}%
  \def#4{#2\\{#1}}%
  \ignorespaces}
\def\!getnextvalueof#1\from#2{%
  \expandafter\!gnextv#2\!#1#2}%
\def\!gnextv\\#1#2\!#3#4{%
  #3=#1%
  \def#4{#2\\{#1}}%
  \ignorespaces}
\def\!copylist#1\to#2{%
  \expandafter\!!copylist#1\!#2}
\def\!!copylist#1\!#2{%
  \def#2{#1}\ignorespaces}
\def\!wlet#1=#2{%
  \let#1=#2 
  \wlog{\string#1=\string#2}}
\def\!listaddon#1#2{%
  \expandafter\!!listaddon#2\!{#1}#2}
\def\!!listaddon#1\!#2#3{%
  \def#3{#1\\#2}}
\def\!rightappend#1\withCS#2\to#3{\expandafter\!!rightappend#3\!#2{#1}#3}
\def\!!rightappend#1\!#2#3#4{\def#4{#1#2{#3}}}
\def\!leftappend#1\withCS#2\to#3{\expandafter\!!leftappend#3\!#2{#1}#3}
\def\!!leftappend#1\!#2#3#4{\def#4{#2{#3}#1}}
\def\!lop#1\to#2{\expandafter\!!lop#1\!#1#2}
\def\!!lop\\#1#2\!#3#4{\def#4{#1}\def#3{#2}}
\def\!loop#1\repeat{\def\!body{#1}\!iterate}
\def\!iterate{\!body\let\!next=\!iterate\else\let\!next=\relax\fi\!next}
\def\!!loop#1\repeat{\def\!!body{#1}\!!iterate}
\def\!!iterate{\!!body\let\!!next=\!!iterate\else\let\!!next=\relax\fi\!!next}
\def\!removept#1#2{\edef#2{\expandafter\!!removePT\the#1}}
{\catcode`p=12 \catcode`t=12 \gdef\!!removePT#1pt{#1}}
\def\placevalueinpts of <#1> in #2 {%
  \!removept{#1}{#2}}
\def\!mlap#1{\hbox to 0pt{\hss#1\hss}}
\def\!vmlap#1{\vbox to 0pt{\vss#1\vss}}
\def\!not#1{%
  #1\relax
    \!switchfalse
  \else
    \!switchtrue
  \fi
  \if!switch
  \ignorespaces}
\def\wlog#1{}    
\newdimen\headingtoplotskip     %.A.................
\newdimen\linethickness         %.A..X....U........T
\newdimen\longticklength        %.A................T
\newdimen\plotsymbolspacing     %......D...L....Q...
\newdimen\shortticklength       %.A................T
\newdimen\stackleading          %.A..........P......
\newdimen\tickstovaluesleading  %.A................T
\newdimen\totalarclength        %......D...L....Q...
\newdimen\valuestolabelleading  %.A.................
\newbox\!boxA                   %.AW...............T
\newbox\!boxB                   %..W................
\newbox\!picbox                 %............P......
\newbox\!plotsymbol             %..........L..O.....
\newbox\!putobject              %............PO...S.
\newbox\!shadesymbol            %.................S.
\newdimen\!Xleft                %............P......
\newdimen\!Xright               %............P......
\newdimen\!Xsave                %.A................T
\newdimen\!Ybot                 %............P......
\newdimen\!Ysave                %.A................T
\newdimen\!Ytop                 %............P......
\newdimen\!angle                %........E..........
\newdimen\!arclength            %..W......UL....Q...
\newdimen\!areabloc             %.A........L........
\newdimen\!arealloc             %.A........L........
\newdimen\!arearloc             %.A........L........
\newdimen\!areatloc             %.A........L........
\newdimen\!bshrinkage           %.................S.
\newdimen\!checkbot             %..........L........
\newdimen\!checkleft            %..........L........
\newdimen\!checkright           %..........L........
\newdimen\!checktop             %..........L........
\newdimen\!dimenA               %.AW.X.DVEUL..OYQRST
\newdimen\!dimenB               %....X.DVEU...O.QRS.
\newdimen\!dimenC               %..W.X.DVEU......RS.
\newdimen\!dimenD               %..W.X.DVEU....Y.RS.
\newdimen\!dimenE               %..W........G..YQ.S.
\newdimen\!dimenF               %...........G..YQ.S.
\newdimen\!dimenG               %...........G..YQ.S.
\newdimen\!dimenH               %...........G..Y..S.
\newdimen\!dimenI               %...BX.........Y....
\newdimen\!distacross           %..........L....Q...
\newdimen\!downlength           %..........L........
\newdimen\!dp                   %.A..X.......P....S.
\newdimen\!dshade               %.................S.
\newdimen\!dxpos                %..W......U..P....S.
\newdimen\!dxprime              %...............Q...
\newdimen\!dypos                %..WB.....U..P......
\newdimen\!dyprime              %...............Q...
\newdimen\!ht                   %.A..X.......P....S.
\newdimen\!leaderlength         %......D..U.........
\newdimen\!lshrinkage           %.................S.
\newdimen\!midarclength         %...............Q...
\newdimen\!offset               %.A................T
\newdimen\!plotheadingoffset    %.A.................
\newdimen\!plotsymbolxshift     %..........L..O.....
\newdimen\!plotsymbolyshift     %..........L..O.....
\newdimen\!plotxorigin          %..........L..O.....
\newdimen\!plotyorigin          %..........L..O.....
\newdimen\!rootten              %...........G.......
\newdimen\!rshrinkage           %.................S.
\newdimen\!shadesymbolxshift    %.................S.
\newdimen\!shadesymbolyshift    %.................S.
\newdimen\!tenAa                %...........G.......
\newdimen\!tenAc                %...........G.......
\newdimen\!tenAe                %...........G.......
\newdimen\!tshrinkage           %.................S.
\newdimen\!uplength             %..........L........
\newdimen\!wd                   %....X.......P....S.
\newdimen\!wmax                 %...............Q...
\newdimen\!wmin                 %...............Q...
\newdimen\!xB                   %...............Q...
\newdimen\!xC                   %...............Q...
\newdimen\!xE                   %..W.....E.L....Q.S.
\newdimen\!xM                   %..W.....E......Q.S.
\newdimen\!xS                   %..W.....E.L....Q.S.
\newdimen\!xaxislength          %.A................T
\newdimen\!xdiff                %..........L........
\newdimen\!xleft                %............P......
\newdimen\!xloc                 %..WB.....U.......S.
\newdimen\!xorigin              %.A........L.P....S.
\newdimen\!xpivot               %................R..
\newdimen\!xpos                 %..........L.P..Q.ST
\newdimen\!xprime               %...............Q...
\newdimen\!xright               %............P......
\newdimen\!xshade               %.................S.
\newdimen\!xshift               %..W.........PO...S.
\newdimen\!xtemp                %............P......
\newdimen\!xunit                %.AWBX...EUL.P..QRS.
\newdimen\!xxE                  %........E..........
\newdimen\!xxM                  %........E..........
\newdimen\!xxS                  %........E..........
\newdimen\!xxloc                %..WB....EU.........
\newdimen\!yB                   %...............Q...
\newdimen\!yC                   %...............Q...
\newdimen\!yE                   %..W.....E.L....Q...
\newdimen\!yM                   %..W.....E......Q...
\newdimen\!yS                   %..W.....E.L....Q...
\newdimen\!yaxislength          %.A................T
\newdimen\!ybot                 %............P......
\newdimen\!ydiff                %..........L........
\newdimen\!yloc                 %..WB.....U.......S.
\newdimen\!yorigin              %.A........L.P....S.
\newdimen\!ypivot               %................R..
\newdimen\!ypos                 %..........L.P..Q.ST
\newdimen\!yprime               %...............Q...
\newdimen\!yshade               %.................S.
\newdimen\!yshift               %..W.........PO...S.
\newdimen\!ytemp                %............P......
\newdimen\!ytop                 %............P......
\newdimen\!yunit                %.AWBX...EUL.P..QRS.
\newdimen\!yyE                  %........E..........
\newdimen\!yyM                  %........E..........
\newdimen\!yyS                  %........E..........
\newdimen\!yyloc                %..WB....EU.........
\newdimen\!zpt                  %.AWBX.DVEULGP.YQ.ST
\newif\if!axisvisible           %.A.................
\newif\if!gridlinestoo          %..................T
\newif\if!keepPO                %...................
\newif\if!placeaxislabel        %.A.................
\newif\if!switch                %H..................
\newif\if!xswitch               %.A................T
\newtoks\!axisLaBeL             %.A.................
\newtoks\!keywordtoks           %.A.................
\newwrite\!replotfile           %.............O.....
\def\!cosrotationangle{1}      %................R..
\def\!sinrotationangle{0}      %................R..
\def\!xpivotcoord{0}           %................R..
\def\!xref{0}                  %............P......
\def\!xshadesave{0}            %.................S.
\def\!ypivotcoord{0}           %................R..
\def\!yref{0}                  %............P......
\def\!yshadesave{0}            %.................S.
\def\!zero{0}                  %..................T
\let\wlog=\!!!wlog
\def\normalgraphs{%
  \longticklength=.4\baselineskip
  \shortticklength=.25\baselineskip
  \tickstovaluesleading=.25\baselineskip
  \valuestolabelleading=.8\baselineskip
  \linethickness=.4pt
  \stackleading=.17\baselineskip
  \headingtoplotskip=1.5\baselineskip
  \visibleaxes
  \ticksout
  \nogridlines
  \unloggedticks}
\def\setplotarea x from #1 to #2, y from #3 to #4 {%
  \!arealloc=\!M{#1}\!xunit \advance \!arealloc -\!xorigin
  \!areabloc=\!M{#3}\!yunit \advance \!areabloc -\!yorigin
  \!arearloc=\!M{#2}\!xunit \advance \!arearloc -\!xorigin
  \!areatloc=\!M{#4}\!yunit \advance \!areatloc -\!yorigin
  \!initinboundscheck
  \!xaxislength=\!arearloc  \advance\!xaxislength -\!arealloc
  \!yaxislength=\!areatloc  \advance\!yaxislength -\!areabloc
  \!plotheadingoffset=\!zpt
  \!dimenput {{\setbox0=\hbox{}\wd0=\!xaxislength\ht0=\!yaxislength\box0}}
     [bl] (\!arealloc,\!areabloc)}
\def\visibleaxes{%
  \def\!axisvisibility{\!axisvisibletrue}}
\def\!fixkeyword#1{%
  \errhelp=\!keywordhelp
  \errmessage{Unrecognized keyword `#1': \the\!keywordtoks{NEW KEYWORD}'}}
\def\fixkeyword#1{%
  \!nextkeyword#1 }
\def\axis {%
  \def\!nextkeyword##1 {%
    \expandafter\ifx\csname !axis##1\endcsname \relax
      \def\!next{\!fixkeyword{##1}}%
    \else
      \def\!next{\csname !axis##1\endcsname}%
    \fi
    \!next}%
  \!offset=\!zpt
  \!axisvisibility
  \!placeaxislabelfalse
  \!nextkeyword}
\def\!axisbottom{%
  \!axisylevel=\!areabloc
  \def\!tickxsign{0}%
  \def\!tickysign{-}%
  \def\!axissetup{\!axisxsetup}%
  \def\!axislabeltbrl{t}%
  \!nextkeyword}
\def\!axistop{%
  \!axisylevel=\!areatloc
  \def\!tickxsign{0}%
  \def\!tickysign{+}%
  \def\!axissetup{\!axisxsetup}%
  \def\!axislabeltbrl{b}%
  \!nextkeyword}
\def\!axisleft{%
  \!axisxlevel=\!arealloc
  \def\!tickxsign{-}%
  \def\!tickysign{0}%
  \def\!axissetup{\!axisysetup}%
  \def\!axislabeltbrl{r}%
  \!nextkeyword}
\def\!axisright{%
  \!axisxlevel=\!arearloc
  \def\!tickxsign{+}%
  \def\!tickysign{0}%
  \def\!axissetup{\!axisysetup}%
  \def\!axislabeltbrl{l}%
  \!nextkeyword}
\def\!axisshiftedto#1=#2 {%
  \if 0\!tickxsign
    \!axisylevel=\!M{#2}\!yunit
    \advance\!axisylevel -\!yorigin
  \else
    \!axisxlevel=\!M{#2}\!xunit
    \advance\!axisxlevel -\!xorigin
  \fi
  \!nextkeyword}
\def\!axisvisible{%
  \!axisvisibletrue  
  \!nextkeyword}
\def\!axisinvisible{%
  \!axisvisiblefalse
  \!nextkeyword}
\def\!axislabel#1 {%
  \!axisLaBeL={#1}%
  \!placeaxislabeltrue
  \!nextkeyword}
\def\csname !axis/\endcsname{%
  \!axissetup % This could done already by "ticks"; if so, now \relax
  \if!placeaxislabel
    \!placeaxislabel
  \fi
  \if +\!tickysign %                 ** (A "top" axis)
    \!dimenA=\!axisylevel
    \advance\!dimenA \!offset %      ** dimA = top of the axis structure
    \advance\!dimenA -\!areatloc %   ** dimA = excess over the plot area
    \ifdim \!dimenA>\!plotheadingoffset
      \!plotheadingoffset=\!dimenA % ** Greatest excess over the plot area
    \fi
  \fi}
\def\grid #1 #2 {%
  \!countA=#1\advance\!countA 1
  \axis bottom invisible ticks length <\!zpt> andacross quantity {\!countA} /
  \!countA=#2\advance\!countA 1
  \axis left   invisible ticks length <\!zpt> andacross quantity {\!countA} / }
\def\plotheading#1 {%
  \advance\!plotheadingoffset \headingtoplotskip
  \!dimenput {#1} [B] <.5\!xaxislength,\!plotheadingoffset>
    (\!arealloc,\!areatloc)}
\def\!axisxsetup{%
  \!axisxlevel=\!arealloc
  \!axisstart=\!arealloc
  \!axisend=\!arearloc
  \!axisLength=\!xaxislength
  \!!origin=\!xorigin
  \!!unit=\!xunit
  \!xswitchtrue
  \if!axisvisible 
    \!makeaxis
  \fi}
\def\!axisysetup{%
  \!axisylevel=\!areabloc
  \!axisstart=\!areabloc
  \!axisend=\!areatloc
  \!axisLength=\!yaxislength
  \!!origin=\!yorigin
  \!!unit=\!yunit
  \!xswitchfalse
  \if!axisvisible
    \!makeaxis
  \fi}
\def\!makeaxis{%
  \setbox\!boxA=\hbox{% (Make a pseudo-y[x] tick for an x[y]-axis)
    \beginpicture
      \!setdimenmode
      \setcoordinatesystem point at {\!zpt} {\!zpt}   
      \putrule from {\!zpt} {\!zpt} to
        {\!tickysign\!tickysign\!axisLength} 
        {\!tickxsign\!tickxsign\!axisLength}
    \endpicturesave <\!Xsave,\!Ysave>}%
    \wd\!boxA=\!zpt
    \!placetick\!axisstart}
\def\!placeaxislabel{%
  \advance\!offset \valuestolabelleading
  \if!xswitch
    \!dimenput {\the\!axisLaBeL} [\!axislabeltbrl]
      <.5\!axisLength,\!tickysign\!offset> (\!axisxlevel,\!axisylevel)
    \advance\!offset \!dp  % ** advance offset by the "tallness"
    \advance\!offset \!ht  % ** of the label
  \else
    \!dimenput {\the\!axisLaBeL} [\!axislabeltbrl]
      <\!tickxsign\!offset,.5\!axisLength> (\!axisxlevel,\!axisylevel)
  \fi
  \!axisLaBeL={}}
\def\arrow <#1> [#2,#3]{%
  \!ifnextchar<{\!arrow{#1}{#2}{#3}}{\!arrow{#1}{#2}{#3}<\!zpt,\!zpt> }}
\def\!arrow#1#2#3<#4,#5> from #6 #7 to #8 #9 {%
%
% ** convert to dimensions
  \!xloc=\!M{#8}\!xunit   
  \!yloc=\!M{#9}\!yunit
  \!dxpos=\!xloc  \!dimenA=\!M{#6}\!xunit  \advance \!dxpos -\!dimenA
  \!dypos=\!yloc  \!dimenA=\!M{#7}\!yunit  \advance \!dypos -\!dimenA
  \let\!MAH=\!M%                         ** save current c/d mode
  \!setdimenmode%                        ** go into dimension mode
  \!xshift=#4\relax  \!yshift=#5\relax%  ** pick up shift
  \!reverserotateonly\!xshift\!yshift%   ** back rotate shift
  \advance\!xshift\!xloc  \advance\!yshift\!yloc
%
% **  draw shaft of arrow
  \!xS=-\!dxpos  \advance\!xS\!xshift
  \!yS=-\!dypos  \advance\!yS\!yshift
  \!start (\!xS,\!yS)
  \!ljoin (\!xshift,\!yshift)
%
% ** find 32*cosine and 32*sine of angle of rotation
  \!Pythag\!dxpos\!dypos\!arclength
  \!divide\!dxpos\!arclength\!dxpos  
  \!dxpos=32\!dxpos  \!removept\!dxpos\!!cos
  \!divide\!dypos\!arclength\!dypos  
  \!dypos=32\!dypos  \!removept\!dypos\!!sin
% 
% ** construct arrowhead
  \!halfhead{#1}{#2}{#3}%                ** draw half of arrow head
  \!halfhead{#1}{-#2}{-#3}%              ** draw other half
  \let\!M=\!MAH%                         ** restore old c/d mode
  \ignorespaces}
  \def\!halfhead#1#2#3{%
    \!dimenC=-#1%                
    \divide \!dimenC 2 %                 ** half way back
    \!dimenD=#2\!dimenC%                 ** half the mid width
    \!rotate(\!dimenC,\!dimenD)by(\!!cos,\!!sin)to(\!xM,\!yM)
    \!dimenC=-#1%                        ** all the way back
    \!dimenD=#3\!dimenC
    \!dimenD=.5\!dimenD%                 ** half the full width
    \!rotate(\!dimenC,\!dimenD)by(\!!cos,\!!sin)to(\!xE,\!yE)
    \!start (\!xshift,\!yshift)
    \advance\!xM\!xshift  \advance\!yM\!yshift
    \advance\!xE\!xshift  \advance\!yE\!yshift
    \!qjoin (\!xM,\!yM) (\!xE,\!yE) 
    \ignorespaces}
\def\betweenarrows #1#2 from #3 #4 to #5 #6 {%
  \!xloc=\!M{#3}\!xunit  \!xxloc=\!M{#5}\!xunit%   
  \!yloc=\!M{#4}\!yunit  \!yyloc=\!M{#6}\!yunit%           
  \!dxpos=\!xxloc  \advance\!dxpos by -\!xloc
  \!dypos=\!yyloc  \advance\!dypos by -\!yloc
  \advance\!xloc .5\!dxpos
  \advance\!yloc .5\!dypos
  \let\!MBA=\!M%           ** save current coord\dimen mode
  \!setdimenmode%          ** express locations in dimens
  \ifdim\!dypos=\!zpt
    \ifdim\!dxpos<\!zpt \!dxpos=-\!dxpos \fi
    \put {\!lrarrows{\!dxpos}{#1}}#2{} at {\!xloc} {\!yloc}
  \else
    \ifdim\!dxpos=\!zpt
      \ifdim\!dypos<\!zpt \!dypos=-\!zpt \fi
      \put {\!udarrows{\!dypos}{#1}}#2{} at {\!xloc} {\!yloc}
    \fi
  \fi
  \let\!M=\!MBA%           ** restore previous c/d mode
  \ignorespaces}
\def\!lrarrows#1#2{% #1=width, #2=text
  {\setbox\!boxA=\hbox{$\mkern-2mu\mathord-\mkern-2mu$}%
   \setbox\!boxB=\hbox{$\leftarrow$}\!dimenE=\ht\!boxB
   \setbox\!boxB=\hbox{}\ht\!boxB=2\!dimenE
   \hbox to #1{$\mathord\leftarrow\mkern-6mu
     \cleaders\copy\!boxA\hfil
     \mkern-6mu\mathord-$%
     \kern.4em $\vcenter{\box\!boxB}$$\vcenter{\hbox{#2}}$\kern.4em
     $\mathord-\mkern-6mu
     \cleaders\copy\!boxA\hfil
     \mkern-6mu\mathord\rightarrow$}}}
\def\!udarrows#1#2{% #1=width, #2=text
  {\setbox\!boxB=\hbox{#2}%
   \setbox\!boxA=\hbox to \wd\!boxB{\hss$\vert$\hss}%
   \!dimenE=\ht\!boxA \advance\!dimenE \dp\!boxA \divide\!dimenE 2
   \vbox to #1{\offinterlineskip
      \vskip .05556\!dimenE
      \hbox to \wd\!boxB{\hss$\mkern.4mu\uparrow$\hss}\vskip-\!dimenE
      \cleaders\copy\!boxA\vfil
      \vskip-\!dimenE\copy\!boxA
      \vskip\!dimenE\copy\!boxB\vskip.4em
      \copy\!boxA\vskip-\!dimenE
      \cleaders\copy\!boxA\vfil
      \vskip-\!dimenE \hbox to \wd\!boxB{\hss$\mkern.4mu\downarrow$\hss}
      \vskip .05556\!dimenE}}}
\def\putbar#1breadth <#2> from #3 #4 to #5 #6 {%
  \!xloc=\!M{#3}\!xunit  \!xxloc=\!M{#5}\!xunit%   
  \!yloc=\!M{#4}\!yunit  \!yyloc=\!M{#6}\!yunit%           
  \!dypos=\!yyloc  \advance\!dypos by -\!yloc
  \!dimenI=#2  
  \ifdim \!dimenI=\!zpt %            ** If 0 breadth
    \putrule#1from {#3} {#4} to {#5} {#6} % ** Then draw line
  \else %                            ** Else, put in a rectangle
    \let\!MBar=\!M%                  ** save current c/d mode
    \!setdimenmode %                 ** go into dimension mode
    \divide\!dimenI 2
    \ifdim \!dypos=\!zpt             
      \advance \!yloc -\!dimenI %    ** Equal y coordinates
      \advance \!yyloc \!dimenI
    \else
      \advance \!xloc -\!dimenI %    ** Equal x coordinates
      \advance \!xxloc \!dimenI
    \fi
    \putrectangle#1corners at {\!xloc} {\!yloc} and {\!xxloc} {\!yyloc}
    \let\!M=\!MBar %                 ** restore c/d mode
  \fi
  \ignorespaces}
\def\setbars#1breadth <#2> baseline at #3 = #4 {%
  \edef\!barshift{#1}%
  \edef\!barbreadth{#2}%
  \edef\!barorientation{#3}%
  \edef\!barbaseline{#4}%
  \def\!bardobaselabel{\!bardoendlabel}%
  \def\!bardoendlabel{\!barfinish}%
  \let\!drawcurve=\!barcurve
  \!setbars}
\def\!setbars{%
  \futurelet\!nextchar\!!setbars}
\def\!!setbars{%
  \if b\!nextchar
    \def\!!!setbars{\!setbarsbget}%
  \else 
    \if e\!nextchar
      \def\!!!setbars{\!setbarseget}%
    \else
      \def\!!!setbars{\relax}%
    \fi
  \fi
  \!!!setbars}
\def\!setbarsbget baselabels (#1) {%
  \def\!barbaselabelorientation{#1}%
  \def\!bardobaselabel{\!!bardobaselabel}%
  \!setbars}
\def\!setbarseget endlabels (#1) {%
  \edef\!barendlabelorientation{#1}%
  \def\!bardoendlabel{\!!bardoendlabel}%
  \!setbars}
\def\!barcurve #1 #2 {%
  \if y\!barorientation
    \def\!basexarg{#1}%
    \def\!baseyarg{\!barbaseline}%
  \else
    \def\!basexarg{\!barbaseline}%
    \def\!baseyarg{#2}%
  \fi
  \expandafter\putbar\!barshift breadth <\!barbreadth> from {\!basexarg}
    {\!baseyarg} to {#1} {#2}
  \def\!endxarg{#1}%
  \def\!endyarg{#2}%
  \!bardobaselabel}
\def\!!bardobaselabel "#1" {%
  \put {#1}\!barbaselabelorientation{} at {\!basexarg} {\!baseyarg}
  \!bardoendlabel}
\def\!!bardoendlabel "#1" {%
  \put {#1}\!barendlabelorientation{} at {\!endxarg} {\!endyarg}
  \!barfinish}
\def\!barfinish{%
  \!ifnextchar/{\!finish}{\!barcurve}}
\def\putrectangle{%
  \!ifnextchar<{\!putrectangle}{\!putrectangle<\!zpt,\!zpt> }}
\def\!putrectangle<#1,#2> corners at #3 #4 and #5 #6 {%
%
% ** get locations
  \!xone=\!M{#3}\!xunit  \!xtwo=\!M{#5}\!xunit%   
  \!yone=\!M{#4}\!yunit  \!ytwo=\!M{#6}\!yunit%           
  \ifdim \!xtwo<\!xone
    \!dimenI=\!xone  \!xone=\!xtwo  \!xtwo=\!dimenI
  \fi
  \ifdim \!ytwo<\!yone
    \!dimenI=\!yone  \!yone=\!ytwo  \!ytwo=\!dimenI
  \fi
  \!dimenI=#1\relax  \advance\!xone\!dimenI  \advance\!xtwo\!dimenI
  \!dimenI=#2\relax  \advance\!yone\!dimenI  \advance\!ytwo\!dimenI
  \let\!MRect=\!M%                  ** save current coord/dimen mode
  \!setdimenmode
%
% ** shade rectangle if appropriate
  \!shaderectangle
%
% ** draw horizontal edges
  \!dimenI=.5\linethickness
  \advance \!xone  -\!dimenI%       ** adjust x-location to overlap corners
  \advance \!xtwo   \!dimenI%       ** ditto
  \putrule from {\!xone} {\!yone} to {\!xtwo} {\!yone} 
  \putrule from {\!xone} {\!ytwo} to {\!xtwo} {\!ytwo} 
%
% ** draw vertical edges
  \advance \!xone   \!dimenI%       ** restore original x-values
  \advance \!xtwo  -\!dimenI% 
  \advance \!yone  -\!dimenI%       ** adjust y-location to overlap corners
  \advance \!ytwo   \!dimenI%       ** ditto
  \putrule from {\!xone} {\!yone} to {\!xone} {\!ytwo} 
  \putrule from {\!xtwo} {\!yone} to {\!xtwo} {\!ytwo} 
  \let\!M=\!MRect%                  ** restore coord/dimen mode
  \ignorespaces}
\def\shaderectanglesoff{%
  \def\!shaderectangle{}%
  \ignorespaces}
\def\!!shaderectangle{%
  \!dimenA=\!xtwo  \advance \!dimenA -\!xone
  \!dimenB=\!ytwo  \advance \!dimenB -\!yone
  \ifdim \!dimenA<\!dimenB
    \!startvshade (\!xone,\!yone,\!ytwo)
    \!lshade      (\!xtwo,\!yone,\!ytwo)
  \else
    \!starthshade (\!yone,\!xone,\!xtwo)
    \!lshade      (\!ytwo,\!xone,\!xtwo)
  \fi
  \ignorespaces}
\def\frame{%
  \!ifnextchar<{\!frame}{\!frame<\!zpt> }}
\long\def\!frame<#1> #2{%
  \beginpicture
    \setcoordinatesystem units <1pt,1pt> point at 0 0 
    \put {#2} [Bl] at 0 0 
    \!dimenA=#1\relax
    \!dimenB=\!wd \advance \!dimenB \!dimenA
    \!dimenC=\!ht \advance \!dimenC \!dimenA
    \!dimenD=\!dp \advance \!dimenD \!dimenA
    \let\!MFr=\!M
    \!setdimenmode
    \putrectangle corners at {-\!dimenA} {-\!dimenD} and {\!dimenB} {\!dimenC}
    \!setcoordmode
    \let\!M=\!MFr
  \endpicture
  \ignorespaces}
\def\rectangle <#1> <#2> {%
  \setbox0=\hbox{}\wd0=#1\ht0=#2\frame {\box0}}
\def\plot{%
  \!ifnextchar"{\!plotfromfile}{\!drawcurve}}
\def\!plotfromfile"#1"{%
  \expandafter\!drawcurve \input #1 /}
\def\setquadratic{%
  \let\!drawcurve=\!qcurve
  \let\!!Shade=\!!qShade
  \let\!!!Shade=\!!!qShade}
\def\setlinear{%
  \let\!drawcurve=\!lcurve
  \let\!!Shade=\!!lShade
  \let\!!!Shade=\!!!lShade}
\def\sethistograms{%
  \let\!drawcurve=\!hcurve}
\def\!qcurve #1 #2 {%
  \!start (#1,#2)
  \!Qjoin}
\def\!Qjoin#1 #2 #3 #4 {%
  \!qjoin (#1,#2) (#3,#4)             % \!qjoin  is defined in QUADRATIC
  \!ifnextchar/{\!finish}{\!Qjoin}}
\def\!lcurve #1 #2 {%
  \!start (#1,#2)
  \!Ljoin}
\def\!Ljoin#1 #2 {%
  \!ljoin (#1,#2)                    % \!ljoin  is defined in LINEAR
  \!ifnextchar/{\!finish}{\!Ljoin}}
\def\!finish/{\ignorespaces}
\def\!hcurve #1 #2 {%
  \edef\!hxS{#1}%
  \edef\!hyS{#2}%
  \!hjoin}
\def\!hjoin#1 #2 {%
  \putrectangle corners at {\!hxS} {\!hyS} and {#1} {#2}
  \edef\!hxS{#1}%
  \!ifnextchar/{\!finish}{\!hjoin}}
\def\vshade #1 #2 #3 {%
  \!startvshade (#1,#2,#3)
  \!Shadewhat}
\def\hshade #1 #2 #3 {%
  \!starthshade (#1,#2,#3)
  \!Shadewhat}
\def\!Shadewhat{%
  \futurelet\!nextchar\!Shade}
\def\!Shade{%
  \if <\!nextchar
    \def\!nextShade{\!!Shade}%
  \else
    \if /\!nextchar
      \def\!nextShade{\!finish}%
    \else
      \def\!nextShade{\!!!Shade}%
    \fi
  \fi
  \!nextShade}
\def\!!lShade<#1> #2 #3 #4 {%
  \!lshade <#1> (#2,#3,#4)                 % \!lshade is defined in SHADING
  \!Shadewhat}
\def\!!!lShade#1 #2 #3 {%
  \!lshade (#1,#2,#3)
  \!Shadewhat} 
\def\!!qShade<#1> #2 #3 #4 #5 #6 #7 {%
  \!qshade <#1> (#2,#3,#4) (#5,#6,#7)      % \!qshade is defined in SHADING
  \!Shadewhat}
\def\!!!qShade#1 #2 #3 #4 #5 #6 {%
  \!qshade (#1,#2,#3) (#4,#5,#6)
  \!Shadewhat} 
\def\setdashpattern <#1>{%
  \def\!Flist{}\def\!Blist{}\def\!UDlist{}%
  \!countA=0
  \!ecfor\!item:=#1\do{%
    \!dimenA=\!item\relax
    \expandafter\!rightappend\the\!dimenA\withCS{\\}\to\!UDlist%
    \advance\!countA  1
    \ifodd\!countA
      \expandafter\!rightappend\the\!dimenA\withCS{\!Rule}\to\!Flist%
      \expandafter\!leftappend\the\!dimenA\withCS{\!Rule}\to\!Blist%
    \else 
      \expandafter\!rightappend\the\!dimenA\withCS{\!Skip}\to\!Flist%
      \expandafter\!leftappend\the\!dimenA\withCS{\!Skip}\to\!Blist%
    \fi}%
  \!leaderlength=\!zpt
  \def\!Rule##1{\advance\!leaderlength  ##1}%
  \def\!Skip##1{\advance\!leaderlength  ##1}%
  \!Flist%
  \ifdim\!leaderlength>\!zpt 
  \else
    \def\!Flist{\!Skip{24in}}\def\!Blist{\!Skip{24in}}\ignorespaces
    \def\!UDlist{\\{\!zpt}\\{24in}}\ignorespaces
    \!leaderlength=24in
  \fi
  \!dashingon}
\def\!dashingon{%
  \def\!advancedashing{\!!advancedashing}%
  \def\!drawlinearsegment{\!lineardashed}%
  \def\!puthline{\!putdashedhline}%
  \def\!putvline{\!putdashedvline}%
  \ignorespaces}% 
\def\!dashingoff{%
  \def\!advancedashing{\relax}%
  \def\!drawlinearsegment{\!linearsolid}%
  \def\!puthline{\!putsolidhline}%
  \def\!putvline{\!putsolidvline}%
  \ignorespaces}
\def\setdots{%
  \!ifnextchar<{\!setdots}{\!setdots<5pt>}}
\def\!setdots<#1>{%
  \!dimenB=#1\advance\!dimenB -\plotsymbolspacing
  \ifdim\!dimenB<\!zpt
    \!dimenB=\!zpt
  \fi
\setdashpattern <\plotsymbolspacing,\!dimenB>}
\def\setdotsnear <#1> for <#2>{%
  \!dimenB=#2\relax  \advance\!dimenB -.05pt  
  \!dimenC=#1\relax  \!countA=\!dimenC 
  \!dimenD=\!dimenB  \advance\!dimenD .5\!dimenC  \!countB=\!dimenD
  \divide \!countB  \!countA
  \ifnum 1>\!countB 
    \!countB=1
  \fi
  \divide\!dimenB  \!countB
  \setdots <\!dimenB>}
\def\setdashes{%
  \!ifnextchar<{\!setdashes}{\!setdashes<5pt>}}
\def\!setdashes<#1>{\setdashpattern <#1,#1>}
\def\setdashesnear <#1> for <#2>{%
  \!dimenB=#2\relax  
  \!dimenC=#1\relax  \!countA=\!dimenC 
  \!dimenD=\!dimenB  \advance\!dimenD .5\!dimenC  \!countB=\!dimenD
  \divide \!countB  \!countA
  \ifodd \!countB 
  \else 
    \advance \!countB  1
  \fi
  \divide\!dimenB  \!countB
  \setdashes <\!dimenB>}
\def\setsolid{%
  \def\!Flist{\!Rule{24in}}\def\!Blist{\!Rule{24in}}%  
  \def\!UDlist{\\{24in}\\{\!zpt}}%
  \!dashingoff}  
\def\!divide#1#2#3{%
  \!dimenB=#1%                      **  dimB  holds current remainder (r)
  \!dimenC=#2%                      **  dimC  holds divisor (d)
  \!dimenD=\!dimenB%                **  dimD  holds quotient q=r/d for this 
  \divide \!dimenD \!dimenC%        **    step, in units of scaled pts
  \!dimenA=\!dimenD%                **  dimA  eventually holds answer (a)
  \multiply\!dimenD \!dimenC%       **  r <-- r - dq
  \advance\!dimenB -\!dimenD%       **  First step complete. Have integer part
%                                   **  of a, and corresponding remainder.
  \!dimenD=\!dimenC%                **  Temporarily use dimD to hold |d|
    \ifdim\!dimenD<\!zpt \!dimenD=-\!dimenD 
  \fi
  \ifdim\!dimenD<64pt%              **  Branch on the magnitude of |d|
    \!divstep[\!tfs]\!divstep[\!tfs]%
  \else 
    \!!divide
  \fi
  #3=\!dimenA\ignorespaces}
\def\!!divide{%
  \ifdim\!dimenD<256pt
    \!divstep[64]\!divstep[32]\!divstep[32]%
  \else 
    \!divstep[8]\!divstep[8]\!divstep[8]\!divstep[8]\!divstep[8]%
    \!dimenA=2\!dimenA
  \fi}
\def\!divstep[#1]{%                 **  #1 = "B"
  \!dimenB=#1\!dimenB%              **  r <-- B*r
  \!dimenD=\!dimenB%                **  dimD  holds quotient q=r/d for this 
    \divide \!dimenD by \!dimenC%   **    step, in units of scaled pts
  \!dimenA=#1\!dimenA%              **  a <-- B*a + q
    \advance\!dimenA by \!dimenD%
  \multiply\!dimenD by \!dimenC%    **  r <-- r - dq
    \advance\!dimenB by -\!dimenD}
\def\Divide <#1> by <#2> forming <#3> {%
  \!divide{#1}{#2}{#3}}
\def\ellipticalarc axes ratio #1:#2 #3 degrees from #4 #5 center at #6 #7 {%
  \!angle=#3pt\relax%                    ** get angle
  \ifdim\!angle>\!zpt 
    \def\!sign{}%                        ** counterclockwise
  \else 
    \def\!sign{-}\!angle=-\!angle%       ** clockwise
  \fi
  \!xxloc=\!M{#6}\!xunit%                ** convert CENTER to dimension
  \!yyloc=\!M{#7}\!yunit     
  \!xxS=\!M{#4}\!xunit%                  ** get STARTing point on rim of ellipse
  \!yyS=\!M{#5}\!yunit
  \advance\!xxS -\!xxloc%                ** make center of ellipse (0,0)
  \advance\!yyS -\!yyloc
  \!divide\!xxS{#1pt}\!xxS %             ** scale point on ellipse to point on 
  \!divide\!yyS{#2pt}\!yyS %                 corresponding circle
  \let\!MC=\!M%                          ** save current c/d mode
  \!setdimenmode%                        ** go into dimension mode
  \!xS=#1\!xxS  \advance\!xS\!xxloc
  \!yS=#2\!yyS  \advance\!yS\!yyloc
  \!start (\!xS,\!yS)%
  \!loop\ifdim\!angle>14.9999pt%         ** draw in major portion of ellipse 
    \!rotate(\!xxS,\!yyS)by(\!cos,\!sign\!sin)to(\!xxM,\!yyM) 
    \!rotate(\!xxM,\!yyM)by(\!cos,\!sign\!sin)to(\!xxE,\!yyE)
    \!xM=#1\!xxM  \advance\!xM\!xxloc  \!yM=#2\!yyM  \advance\!yM\!yyloc
    \!xE=#1\!xxE  \advance\!xE\!xxloc  \!yE=#2\!yyE  \advance\!yE\!yyloc
    \!qjoin (\!xM,\!yM) (\!xE,\!yE)
    \!xxS=\!xxE  \!yyS=\!yyE 
    \advance \!angle -15pt
  \repeat
  \ifdim\!angle>\!zpt%                   ** complete remaining arc, if any
    \!angle=100.53096\!angle%            ** convert angle to radians, divide
    \divide \!angle 360 %                **   by 2, and multiply by 32
    \!sinandcos\!angle\!!sin\!!cos%      ** get 32*sin & 32*cos
    \!rotate(\!xxS,\!yyS)by(\!!cos,\!sign\!!sin)to(\!xxM,\!yyM) 
    \!rotate(\!xxM,\!yyM)by(\!!cos,\!sign\!!sin)to(\!xxE,\!yyE)
    \!xM=#1\!xxM  \advance\!xM\!xxloc  \!yM=#2\!yyM  \advance\!yM\!yyloc
    \!xE=#1\!xxE  \advance\!xE\!xxloc  \!yE=#2\!yyE  \advance\!yE\!yyloc
    \!qjoin (\!xM,\!yM) (\!xE,\!yE)
  \fi
  \let\!M=\!MC%                          ** restore c/d mode
  \ignorespaces}%                        **   if appropriate
\def\!rotate(#1,#2)by(#3,#4)to(#5,#6){% 
  \!dimenA=#3#1\advance \!dimenA -#4#2%   ** Rcos(x+t)=Rcosx*cost - Rsinx*sint
  \!dimenB=#3#2\advance \!dimenB  #4#1%   ** Rsin(x+t)=Rsinx*cost + Rcosx*sint
  \divide \!dimenA 32  \divide \!dimenB 32 
  #5=\!dimenA  #6=\!dimenB
  \ignorespaces}
\def\!sin{4.17684}%                       ** 32*sin(pi/24) (pi/24=7.5deg)
\def\!cos{31.72624}%                      ** 32*cos(pi/24)
\def\!sinandcos#1#2#3{%
 \!dimenD=#1%                **  angle is expressed in radians/32: 1pt = 1/32rad
 \!dimenA=\!dimenD%          **  dimA will eventually contain 32sin(angle)in pts
 \!dimenB=32pt%              **  dimB will eventually contain 32cos(angle)in pts
 \!removept\!dimenD\!value%  **  get value of 32*angle, without "pt"
 \!dimenC=\!dimenD%          **  holds 32*angle**i/i! in pts
 \!dimenC=\!value\!dimenC \divide\!dimenC by 64 %   ** now 32*angle**2/2
 \advance\!dimenB by -\!dimenC%                     ** 32-32*angle**2/2
 \!dimenC=\!value\!dimenC \divide\!dimenC by 96 %   ** now 32*angle**3/3!
 \advance\!dimenA by -\!dimenC%                     ** now 32*(angle-angle**3/6)
 \!dimenC=\!value\!dimenC \divide\!dimenC by 128 %  ** now 32*angle**4/4!
 \advance\!dimenB by \!dimenC%
 \!removept\!dimenA#2%                              ** set 32*sin(angle)
 \!removept\!dimenB#3%                              ** set 32*cos(angle)
 \ignorespaces}
\def\putrule#1from #2 #3 to #4 #5 {%
  \!xloc=\!M{#2}\!xunit  \!xxloc=\!M{#4}\!xunit%   
  \!yloc=\!M{#3}\!yunit  \!yyloc=\!M{#5}\!yunit%           
  \!dxpos=\!xxloc  \advance\!dxpos by -\!xloc
  \!dypos=\!yyloc  \advance\!dypos by -\!yloc
  \ifdim\!dypos=\!zpt
    \def\!!Line{\!puthline{#1}}\ignorespaces
  \else
    \ifdim\!dxpos=\!zpt
      \def\!!Line{\!putvline{#1}}\ignorespaces
    \else 
       \def\!!Line{}
    \fi
  \fi
  \let\!ML=\!M%           ** save current coord\dimen mode
  \!setdimenmode%         ** express locations in dimens
  \!!Line%
  \let\!M=\!ML%           ** restore previous c/d mode
  \ignorespaces}
\def\!putsolidhline#1{%
  \ifdim\!dxpos>\!zpt 
    \put{\!hline\!dxpos}#1[l] at {\!xloc} {\!yloc}
  \else 
    \put{\!hline{-\!dxpos}}#1[l] at {\!xxloc} {\!yyloc}
  \fi
  \ignorespaces}
\def\!putsolidvline#1{%
  \ifdim\!dypos>\!zpt 
    \put{\!vline\!dypos}#1[b] at {\!xloc} {\!yloc}
  \else 
    \put{\!vline{-\!dypos}}#1[b] at {\!xxloc} {\!yyloc}
  \fi
  \ignorespaces}
\def\!hline#1{\hbox to #1{\leaders \hrule height\linethickness\hfill}}
\def\!vline#1{\vbox to #1{\leaders \vrule width\linethickness\vfill}}
\def\!putdashedhline#1{%
  \ifdim\!dxpos>\!zpt 
    \!DLsetup\!Flist\!dxpos
    \put{\hbox to \!totalleaderlength{\!hleaders}\!hpartialpattern\!Rtrunc}
      #1[l] at {\!xloc} {\!yloc} 
  \else 
    \!DLsetup\!Blist{-\!dxpos}
    \put{\!hpartialpattern\!Ltrunc\hbox to \!totalleaderlength{\!hleaders}}
      #1[r] at {\!xloc} {\!yloc} 
  \fi
  \ignorespaces}
\def\!putdashedvline#1{%
  \!dypos=-\!dypos%            ** vertical leaders go from top to bottom
  \ifdim\!dypos>\!zpt 
    \!DLsetup\!Flist\!dypos 
    \put{\vbox{\vbox to \!totalleaderlength{\!vleaders}
      \!vpartialpattern\!Rtrunc}}#1[t] at {\!xloc} {\!yloc} 
  \else 
    \!DLsetup\!Blist{-\!dypos}
    \put{\vbox{\!vpartialpattern\!Ltrunc
      \vbox to \!totalleaderlength{\!vleaders}}}#1[b] at {\!xloc} {\!yloc} 
  \fi
  \ignorespaces}
\def\!DLsetup#1#2{%            ** Dashed-Line set up
  \let\!RSlist=#1%             ** set !Rule-Skip list
  \!countB=#2%                 ** convert rule length to integer (number of sps)
  \!countA=\!leaderlength%     ** ditto, leaderlength
  \divide\!countB by \!countA% ** number of complete leader units
  \!totalleaderlength=\!countB\!leaderlength
  \!Rresiduallength=#2%
  \advance \!Rresiduallength by -\!totalleaderlength%  \** excess length
  \!Lresiduallength=\!leaderlength
  \advance \!Lresiduallength by -\!Rresiduallength
  \ignorespaces}
\def\!hleaders{%
  \def\!Rule##1{\vrule height\linethickness width##1}%
  \def\!Skip##1{\hskip##1}%
  \leaders\hbox{\!RSlist}\hfill}
\def\!hpartialpattern#1{%
  \!dimenA=\!zpt \!dimenB=\!zpt 
  \def\!Rule##1{#1{##1}\vrule height\linethickness width\!dimenD}%
  \def\!Skip##1{#1{##1}\hskip\!dimenD}%
  \!RSlist}
\def\!vleaders{%
  \def\!Rule##1{\hrule width\linethickness height##1}%
  \def\!Skip##1{\vskip##1}%
  \leaders\vbox{\!RSlist}\vfill}
\def\!vpartialpattern#1{%
  \!dimenA=\!zpt \!dimenB=\!zpt 
  \def\!Rule##1{#1{##1}\hrule width\linethickness height\!dimenD}%
  \def\!Skip##1{#1{##1}\vskip\!dimenD}%
  \!RSlist}
\def\!Rtrunc#1{\!trunc{#1}>\!Rresiduallength}
\def\!Ltrunc#1{\!trunc{#1}<\!Lresiduallength}
\def\!trunc#1#2#3{%          
  \!dimenA=\!dimenB         
  \advance\!dimenB by #1%
  \!dimenD=\!dimenB  \ifdim\!dimenD#2#3\!dimenD=#3\fi
  \!dimenC=\!dimenA  \ifdim\!dimenC#2#3\!dimenC=#3\fi
  \advance \!dimenD by -\!dimenC}
\def\!start (#1,#2){%
  \!plotxorigin=\!xorigin  \advance \!plotxorigin by \!plotsymbolxshift
  \!plotyorigin=\!yorigin  \advance \!plotyorigin by \!plotsymbolyshift
  \!xS=\!M{#1}\!xunit \!yS=\!M{#2}\!yunit
  \!rotateaboutpivot\!xS\!yS
  \!copylist\!UDlist\to\!!UDlist% **\!UDlist has the form \\{dimen1}\\{dimen2}..
%                                 ** Routine will draw dashed line with pen
%                                 ** down for dimen1, up for dimen2, ...
  \!getnextvalueof\!downlength\from\!!UDlist
  \!distacross=\!zpt%             ** 1st point goes at start of curve
  \!intervalno=0 %                ** initialize interval counter
  \global\totalarclength=\!zpt%   ** initialize distance traveled along curve
  \ignorespaces}
\def\!ljoin (#1,#2){%
  \advance\!intervalno by 1
  \!xE=\!M{#1}\!xunit \!yE=\!M{#2}\!yunit
  \!rotateaboutpivot\!xE\!yE
  \!xdiff=\!xE \advance \!xdiff by -\!xS%**  xdiff = xE - xS
  \!ydiff=\!yE \advance \!ydiff by -\!yS%**  ydiff = yE - yS
  \!Pythag\!xdiff\!ydiff\!arclength%     **  arclength = sqrt(xdiff**2+ydiff**2) 
  \global\advance \totalarclength by \!arclength%
  \!drawlinearsegment%   ** set by dashpat to \!linearsolid or \!lineardashed
  \!xS=\!xE \!yS=\!yE%   ** shift ending points to starting points
  \ignorespaces}
\def\!linearsolid{%
  \!npoints=\!arclength
  \!countA=\plotsymbolspacing
  \divide\!npoints by \!countA%      ** now #pts =. arclength/plotsymbolspacing
  \ifnum \!npoints<1 
    \!npoints=1 
  \fi
  \divide\!xdiff by \!npoints
  \divide\!ydiff by \!npoints
  \!xpos=\!xS \!ypos=\!yS
  \loop\ifnum\!npoints>-1
    \!plotifinbounds
    \advance \!xpos by \!xdiff
    \advance \!ypos by \!ydiff
    \advance \!npoints by -1
  \repeat
  \ignorespaces}
\def\!lineardashed{%
% **
  \ifdim\!distacross>\!arclength
    \advance \!distacross by -\!arclength  %nothing to plot in this interval
  \else
    \loop\ifdim\!distacross<\!arclength
%     ** plot point, interpolating linearly in x and y
      \!divide\!distacross\!arclength\!dimenA%  ** dimA = across/arclength
      \!removept\!dimenA\!t%  ** \!t holds value in dimA, without the "pt"
      \!xpos=\!t\!xdiff \advance \!xpos by \!xS
      \!ypos=\!t\!ydiff \advance \!ypos by \!yS
      \!plotifinbounds
      \advance\!distacross by \plotsymbolspacing
      \!advancedashing
    \repeat  
    \advance \!distacross by -\!arclength%    ** prepare for next interval 
  \fi
  \ignorespaces}
\def\!!advancedashing{%
  \advance\!downlength by -\plotsymbolspacing
  \ifdim \!downlength>\!zpt
  \else
    \advance\!distacross by \!downlength
    \!getnextvalueof\!uplength\from\!!UDlist
    \advance\!distacross by \!uplength
    \!getnextvalueof\!downlength\from\!!UDlist
  \fi}
\def\inboundscheckoff{%
  \def\!plotifinbounds{\!plot(\!xpos,\!ypos)}%
  \def\!initinboundscheck{\relax}\ignorespaces}
\def\!!plotifinbounds{%
  \ifdim \!xpos<\!checkleft
  \else
    \ifdim \!xpos>\!checkright
    \else
      \ifdim \!ypos<\!checkbot
      \else
         \ifdim \!ypos>\!checktop
         \else
           \!plot(\!xpos,\!ypos)
         \fi 
      \fi
    \fi
  \fi}
\def\!!initinboundscheck{%
  \!checkleft=\!arealloc     \advance\!checkleft by \!xorigin
  \!checkright=\!arearloc    \advance\!checkright by \!xorigin
  \!checkbot=\!areabloc      \advance\!checkbot by \!yorigin
  \!checktop=\!areatloc      \advance\!checktop by \!yorigin}
\def\!logten#1#2{%
  \expandafter\!!logten#1\!nil
  \!removept\!dimenF#2%
  \ignorespaces}
\def\!!logten#1#2\!nil{%
  \if -#1%
    \!dimenF=\!zpt
    \def\!next{\ignorespaces}%
  \else
    \if +#1%
      \def\!next{\!!logten#2\!nil}%
    \else
      \if .#1%
        \def\!next{\!!logten0.#2\!nil}%
      \else
        \def\!next{\!!!logten#1#2..\!nil}%
      \fi
    \fi
  \fi
  \!next}
\def\!!!logten#1#2.#3.#4\!nil{%
  \!dimenF=1pt %                 ** DimF holds log10 original argument
  \if 0#1%                      
    \!!logshift#3pt %            ** Argument < 1
  \else %                        ** Argument >= 1
    \!logshift#2/%               ** Shift decimal pt as many places
    \!dimenE=#1.#2#3pt %         **   as there are figures in #2
  \fi %                          ** Now dimE holds revised X want log10 of
  \ifdim \!dimenE<\!rootten%          ** Transform X to XX between sqrt(10) 
    \multiply \!dimenE 10 %           **   and 10*sqrt(10)
    \advance  \!dimenF -1pt
  \fi
  \!dimenG=\!dimenE%                  ** dimG <- (XX + 10)
    \advance\!dimenG 10pt
  \advance\!dimenE -10pt %            ** dimE <- (XX - 10)
  \multiply\!dimenE 10 %              ** dimE = 10*(XX-10)
  \!divide\!dimenE\!dimenG\!dimenE%   ** Now dimE=10t==10*(XX-10)/(XX+10)
  \!removept\!dimenE\!t%              ** !t=10t, with "pt" removed
  \!dimenG=\!t\!dimenE%               ** dimG=100t**2
  \!removept\!dimenG\!tt%             ** !tt=100t**2, with "pt" removed
  \!dimenH=\!tt\!tenAe%               ** dimH=10*a5*(10t)**2 /100
    \divide\!dimenH 100
  \advance\!dimenH \!tenAc%           ** ditto + 10*a3
  \!dimenH=\!tt\!dimenH%              ** ditto * (10t)**2 /100
    \divide\!dimenH 100   
  \advance\!dimenH \!tenAa%           ** ditto + 10*a1
  \!dimenH=\!t\!dimenH%               ** ditto * 10t / 100
    \divide\!dimenH 100 %             ** Now dimH = log10(XX) - 1
  \advance\!dimenF \!dimenH}%         ** dimF = log10(X)
\def\!logshift#1{%
  \if #1/%
    \def\!next{\ignorespaces}%
  \else
    \advance\!dimenF 1pt 
    \def\!next{\!logshift}%
  \fi 
  \!next}
 \def\!!logshift#1{%
   \advance\!dimenF -1pt
   \if 0#1%
     \def\!next{\!!logshift}%
   \else
     \if p#1%
       \!dimenF=1pt
       \def\!next{\!dimenE=1p}%
     \else
       \def\!next{\!dimenE=#1.}%
     \fi
   \fi
   \!next}
\def\beginpicture{%
  \setbox\!picbox=\hbox\bgroup%
  \!xleft=\maxdimen  
  \!xright=-\maxdimen
  \!ybot=\maxdimen
  \!ytop=-\maxdimen}
\def\endpicture{%
  \ifdim\!xleft=\maxdimen%  ** check if nothing was put in picbox
    \!xleft=\!zpt \!xright=\!zpt \!ybot=\!zpt \!ytop=\!zpt 
  \fi
  \global\!Xleft=\!xleft \global\!Xright=\!xright
  \global\!Ybot=\!ybot \global\!Ytop=\!ytop
  \egroup%
  \ht\!picbox=\!Ytop  \dp\!picbox=-\!Ybot
  \ifdim\!Ybot>\!zpt
  \else 
    \ifdim\!Ytop<\!zpt
      \!Ybot=\!Ytop
    \else
      \!Ybot=\!zpt
    \fi
  \fi
  \hbox{\kern-\!Xleft\lower\!Ybot\box\!picbox\kern\!Xright}}
\def\endpicturesave <#1,#2>{%
  \endpicture \global #1=\!Xleft \global #2=\!Ybot \ignorespaces}
\def\setcoordinatesystem{%
  \!ifnextchar{u}{\!getlengths }
    {\!getlengths units <\!xunit,\!yunit>}}
\def\!getlengths units <#1,#2>{%
  \!xunit=#1\relax
  \!yunit=#2\relax
  \!ifcoordmode 
    \let\!SCnext=\!SCccheckforRP
  \else
    \let\!SCnext=\!SCdcheckforRP
  \fi
  \!SCnext}
\def\!SCccheckforRP{%
  \!ifnextchar{p}{\!cgetreference }
    {\!cgetreference point at {\!xref} {\!yref} }}
\def\!cgetreference point at #1 #2 {%
  \edef\!xref{#1}\edef\!yref{#2}%
  \!xorigin=\!xref\!xunit  \!yorigin=\!yref\!yunit  
  \!initinboundscheck % ** See linear.tex
  \ignorespaces}
\def\!SCdcheckforRP{%
  \!ifnextchar{p}{\!dgetreference}%
    {\ignorespaces}}
\def\!dgetreference point at #1 #2 {%
  \!xorigin=#1\relax  \!yorigin=#2\relax
  \ignorespaces}
\long\def\put#1#2 at #3 #4 {%
  \!setputobject{#1}{#2}%
  \!xpos=\!M{#3}\!xunit  \!ypos=\!M{#4}\!yunit  
  \!rotateaboutpivot\!xpos\!ypos%
  \advance\!xpos -\!xorigin  \advance\!xpos -\!xshift
  \advance\!ypos -\!yorigin  \advance\!ypos -\!yshift
  \kern\!xpos\raise\!ypos\box\!putobject\kern-\!xpos%
  \!doaccounting\ignorespaces}
\long\def\multiput #1#2 at {%
  \!setputobject{#1}{#2}%
  \!ifnextchar"{\!putfromfile}{\!multiput}}
\def\!putfromfile"#1"{%
  \expandafter\!multiput \input #1 /}
\def\!multiput{%
  \futurelet\!nextchar\!!multiput}
\def\!!multiput{%
  \if *\!nextchar
    \def\!nextput{\!alsoby}%
  \else
    \if /\!nextchar
      \def\!nextput{\!finishmultiput}%
    \else
      \def\!nextput{\!alsoat}%
    \fi
  \fi
  \!nextput}
\def\!finishmultiput/{%
  \setbox\!putobject=\hbox{}%
  \ignorespaces}
\def\!alsoat#1 #2 {%
  \!xpos=\!M{#1}\!xunit  \!ypos=\!M{#2}\!yunit  
  \!rotateaboutpivot\!xpos\!ypos%
  \advance\!xpos -\!xorigin  \advance\!xpos -\!xshift
  \advance\!ypos -\!yorigin  \advance\!ypos -\!yshift
  \kern\!xpos\raise\!ypos\copy\!putobject\kern-\!xpos%
  \!doaccounting
  \!multiput}
\def\!alsoby*#1 #2 #3 {%
  \!dxpos=\!M{#2}\!xunit \!dypos=\!M{#3}\!yunit 
  \!rotateonly\!dxpos\!dypos
  \!ntemp=#1%
  \!!loop\ifnum\!ntemp>0
    \advance\!xpos by \!dxpos  \advance\!ypos by \!dypos
    \kern\!xpos\raise\!ypos\copy\!putobject\kern-\!xpos%
    \advance\!ntemp by -1
  \repeat
  \!doaccounting 
  \!multiput}
\def\accountingon{\def\!doaccounting{\!!doaccounting}\ignorespaces}
\def\!!doaccounting{%
  \!xtemp=\!xpos  
  \!ytemp=\!ypos
  \ifdim\!xtemp<\!xleft 
     \!xleft=\!xtemp 
  \fi
  \advance\!xtemp by  \!wd 
  \ifdim\!xright<\!xtemp 
    \!xright=\!xtemp
  \fi
  \advance\!ytemp by -\!dp
  \ifdim\!ytemp<\!ybot  
    \!ybot=\!ytemp
  \fi
  \advance\!ytemp by  \!dp
  \advance\!ytemp by  \!ht 
  \ifdim\!ytemp>\!ytop  
    \!ytop=\!ytemp  
  \fi}
\long\def\!setputobject#1#2{%
  \setbox\!putobject=\hbox{#1}%
  \!ht=\ht\!putobject  \!dp=\dp\!putobject  \!wd=\wd\!putobject
  \wd\!putobject=\!zpt
  \!xshift=.5\!wd   \!yshift=.5\!ht   \advance\!yshift by -.5\!dp
  \edef\!putorientation{#2}%
  \expandafter\!SPOreadA\!putorientation[]\!nil%
  \expandafter\!SPOreadB\!putorientation<\!zpt,\!zpt>\!nil\ignorespaces}
\def\!SPOreadA#1[#2]#3\!nil{\!etfor\!orientation:=#2\do\!SPOreviseshift}
\def\!SPOreadB#1<#2,#3>#4\!nil{\advance\!xshift by -#2\advance\!yshift by -#3}
\def\!SPOreviseshift{%
  \if l\!orientation 
    \!xshift=\!zpt
  \else 
    \if r\!orientation 
      \!xshift=\!wd
    \else 
      \if b\!orientation
        \!yshift=-\!dp
      \else 
        \if B\!orientation 
          \!yshift=\!zpt
        \else 
          \if t\!orientation 
            \!yshift=\!ht
          \fi 
        \fi
      \fi
    \fi
  \fi}
\long\def\!dimenput#1#2(#3,#4){%
  \!setputobject{#1}{#2}%
  \!xpos=#3\advance\!xpos by -\!xshift
  \!ypos=#4\advance\!ypos by -\!yshift
  \kern\!xpos\raise\!ypos\box\!putobject\kern-\!xpos%
  \!doaccounting\ignorespaces}
\def\!setdimenmode{%
  \let\!M=\!M!!\ignorespaces}
\def\!setcoordmode{%
  \let\!M=\!M!\ignorespaces}
\def\!ifcoordmode{%
  \ifx \!M \!M!}
\def\!ifdimenmode{%
  \ifx \!M \!M!!}
\def\!M!#1#2{#1#2} 
\def\!M!!#1#2{#1}
\let\setdimensionmode=\!setdimenmode
\let\setcoordinatemode=\!setcoordmode
\def\!stack[#1]{%
  \let\!lglue=\hfill \let\!rglue=\hfill
  \expandafter\let\csname !#1glue\endcsname=\relax
  \!ifnextchar<{\!!stack}{\!!stack<\stackleading>}}
\def\!!stack<#1>#2{%
  \vbox{\def\!valueslist{}\!ecfor\!value:=#2\do{%
    \expandafter\!rightappend\!value\withCS{\\}\to\!valueslist}%
    \!lop\!valueslist\to\!value
    \let\\=\cr\lineskiplimit=\maxdimen\lineskip=#1%
    \baselineskip=-1000pt\halign{\!lglue##\!rglue\cr \!value\!valueslist\cr}}%
  \ignorespaces}
\def\!lines[#1]#2{%
  \let\!lglue=\hfill \let\!rglue=\hfill
  \expandafter\let\csname !#1glue\endcsname=\relax
  \vbox{\halign{\!lglue##\!rglue\cr #2\crcr}}%
  \ignorespaces}
\def\!Lines[#1]#2{%
  \let\!lglue=\hfill \let\!rglue=\hfill
  \expandafter\let\csname !#1glue\endcsname=\relax
  \vtop{\halign{\!lglue##\!rglue\cr #2\crcr}}%
  \ignorespaces}
\def\setplotsymbol(#1#2){%
  \!setputobject{#1}{#2}
  \setbox\!plotsymbol=\box\!putobject%
  \!plotsymbolxshift=\!xshift 
  \!plotsymbolyshift=\!yshift 
  \ignorespaces}
\def\!!plot(#1,#2){%
  \!dimenA=-\!plotxorigin \advance \!dimenA by #1%    ** over
  \!dimenB=-\!plotyorigin \advance \!dimenB by #2%    ** up
  \kern\!dimenA\raise\!dimenB\copy\!plotsymbol\kern-\!dimenA%
  \ignorespaces}
\def\!!!plot(#1,#2){%
  \!dimenA=-\!plotxorigin \advance \!dimenA by #1%    ** over
  \!dimenB=-\!plotyorigin \advance \!dimenB by #2%    ** up
  \kern\!dimenA\raise\!dimenB\copy\!plotsymbol\kern-\!dimenA%
  \!countE=\!dimenA
  \!countF=\!dimenB
  \immediate\write\!replotfile{\the\!countE,\the\!countF.}%
  \ignorespaces}
\def\savelinesandcurves on "#1" {%
  \immediate\closeout\!replotfile
  \immediate\openout\!replotfile=#1%
  \let\!plot=\!!!plot}
\def\dontsavelinesandcurves {%
  \let\!plot=\!!plot}
\xdef\!Commentsignal{%}}
\def\writesavefile#1 {%
  \immediate\write\!replotfile{\!Commentsignal #1}%
  \ignorespaces}

% ** \replot "FILE_NAME"
% **   Replots the locations saved earlier under \savelinesandcurves
% **   on "FILE_NAME"
% ** See Subsection 5.6 of the manual.
\def\replot"#1" {%
  \expandafter\!replot\input #1 /}
\def\!replot#1,#2. {%
  \!dimenA=#1sp
  \kern\!dimenA\raise#2sp\copy\!plotsymbol\kern-\!dimenA
  \futurelet\!nextchar\!!replot}
\def\!!replot{%
  \if /\!nextchar 
    \def\!next{\!finish}%
  \else
    \def\!next{\!replot}%
  \fi
  \!next}
% **************************************************
% ***  PYTHAGORAS  (Euclidean distance function) ***
% **************************************************

% ** User command:
% **   \placehypotenuse for <dimension1> and <dimension2> in <register> 

% ** Internal command:
% **   \!Pythag{X}{Y}{Z}
% **     Input X,Y are dimensions, or dimension registers.
% **     Output Z == sqrt(X**2+Y**2) must be a dimension register.
% **     Assumes that |X|+|Y| < 2048pt (about 28in).
 
% ** Without loss of generality, suppose  x>0, y>0.  Put s = x+y,
% **   z = sqrt(x**2+y**2). Then  z = s*f,  where  f = sqrt(t**2 + (1-t)**2)
% **   = sqrt((1+tau**2)/2), where  t = x/s  and  tau = 2(t-1/2) .
 
% ** Uses the \!divide macro (which uses registers \!dimenA--\!dimenD.
% ** Uses the \!removept macro   (e.g., 123.45pt --> 123.45)
% ** Uses registers \!dimenE--\!dimenI.
\def\!Pythag#1#2#3{%
  \!dimenE=#1\relax                                     
  \ifdim\!dimenE<\!zpt 
    \!dimenE=-\!dimenE 
  \fi%                                            ** dimE = |x|
  \!dimenF=#2\relax
  \ifdim\!dimenF<\!zpt 
    \!dimenF=-\!dimenF 
  \fi%                                            ** dimF = |y|
  \advance \!dimenF by \!dimenE%                  ** dimF = s = |x|+|y|
  \ifdim\!dimenF=\!zpt 
    \!dimenG=\!zpt%                               ** dimG = z = sqrt(x**2+y**2)
  \else 
    \!divide{8\!dimenE}\!dimenF\!dimenE%          ** now dimE = 8t = (8|x|)/s
    \advance\!dimenE by -4pt%                     ** 8tau = (8t-4)*2
      \!dimenE=2\!dimenE%                         **   (tau = 2*t - 1)
    \!removept\!dimenE\!!t%                       ** 8tau, without "pt"
    \!dimenE=\!!t\!dimenE%                        ** (8tau)**2, in pts
    \advance\!dimenE by 64pt%                     ** u = [64 + (8tau)**2]/2
    \divide \!dimenE by 2%                        **   [u = (8f)**2]
    \!dimenH=7pt%                                 ** initial guess g at sqrt(u)
    \!!Pythag\!!Pythag\!!Pythag%                  ** 3 iterations give sqrt(u)
    \!removept\!dimenH\!!t%                       ** 8f=sqrt(u), without "pt"
    \!dimenG=\!!t\!dimenF%                        ** z = (8f)*s/8
    \divide\!dimenG by 8
  \fi
  #3=\!dimenG
  \ignorespaces}

\def\!!Pythag{%                                   ** Newton-Raphson for sqrt
  \!divide\!dimenE\!dimenH\!dimenI%               ** v = u/g
  \advance\!dimenH by \!dimenI%                   ** g <-- (g + u/g)/2
    \divide\!dimenH by 2}

% **  \placehypotenuse for <XI> and <ETA> in <ZETA>
% **  See Subsection 9.3 of the manual.
\def\placehypotenuse for <#1> and <#2> in <#3> {%
  \!Pythag{#1}{#2}{#3}}

% **********************************************
% *** QUADRATIC ARC  (Draws a quadratic arc) ***
% **********************************************
 
% **  Internal command
% **    \!qjoin (XCOORD1,YCOORD1) (XCOORD2,YCOORD2)
 
% **  \!qjoin (XCOORD1,YCOORD1) (XCOORD2,YCOORD2)
% **  Draws an arc starting at the (last) point specified by the most recent
% **  \!qjoin, or \!ljoin, or \!start  and passing through (X_1,Y_1), (X_2,Y_2).
% **  Uses quadratic interpolation in both  x  and  y:  
% **    x(t), 0 <= t <= 1, interpolates  x_0, x_1, x_2  at  t=0, .5, 1
% **    y(t), 0 <= t <= 1, interpolates  y_0, y_1, y_2  at  t=0, .5, 1
 
\def\!qjoin (#1,#2) (#3,#4){%
  \advance\!intervalno by 1
  \!ifcoordmode
    \edef\!xmidpt{#1}\edef\!ymidpt{#2}%
  \else
    \!dimenA=#1\relax \edef\!xmidpt{\the\!dimenA}%
    \!dimenA=#2\relax \edef\!ymidpt{\the\!dimenA}%
  \fi
  \!xM=\!M{#1}\!xunit  \!yM=\!M{#2}\!yunit   \!rotateaboutpivot\!xM\!yM
  \!xE=\!M{#3}\!xunit  \!yE=\!M{#4}\!yunit   \!rotateaboutpivot\!xE\!yE
%
% ** Find coefficients for x(t)=a_x + b_x*t + c_x*t**2
  \!dimenA=\!xM  \advance \!dimenA by -\!xS%   ** dimA = I = xM - xS
  \!dimenB=\!xE  \advance \!dimenB by -\!xM%   ** dimB = II = xE-xM
  \!xB=3\!dimenA \advance \!xB by -\!dimenB%   ** b=3I-II
  \!xC=2\!dimenB \advance \!xC by -2\!dimenA%  ** c=2(II-I)
%
% ** Find coefficients for y(t)=y_x + b_y*t + c_y*t**2
  \!dimenA=\!yM  \advance \!dimenA by -\!yS%   
  \!dimenB=\!yE  \advance \!dimenB by -\!yM%  
  \!yB=3\!dimenA \advance \!yB by -\!dimenB%  
  \!yC=2\!dimenB \advance \!yC by -2\!dimenA% 
%
% ** Use Simpson's rule to calculate arc length over [0,1/2]:
% **   arc length = 1/2[1/6 f(0) + 4/6 f(1/4) + 1/6 f(1/2)]
% ** with f(t) = sqrt(x'(t)**2 + y'(t)**2).
  \!xprime=\!xB  \!yprime=\!yB%          ** x'(t) = b + 2ct
  \!dxprime=.5\!xC  \!dyprime=.5\!yC%    ** dt=1/4 ==> dx'(t) = c/2
  \!getf \!midarclength=\!dimenA
  \!getf \advance \!midarclength by 4\!dimenA
  \!getf \advance \!midarclength by \!dimenA
  \divide \!midarclength by 12
%
% ** Get arc length over [0,1].
  \!arclength=\!dimenA
  \!getf \advance \!arclength by 4\!dimenA
  \!getf \advance \!arclength by \!dimenA
  \divide \!arclength by 12%             ** Now have arc length over [1/2,1]
  \advance \!arclength by \!midarclength
  \global\advance \totalarclength by \!arclength
%
%
% ** Check to see if there's anything to plot in this interval
  \ifdim\!distacross>\!arclength 
    \advance \!distacross by -\!arclength%   ** nothing 
  \else
    \!initinverseinterp%  ** initialize for inverse interpolation on arc length
    \loop\ifdim\!distacross<\!arclength%     ** loop over points on arc 
      \!inverseinterp%    ** find  t  such that arc length[0,t] = distacross,
%                         **   using inverse quadratic interpolation
%                         ** now evaluate x(t)=(c*t + b)*t + a
      \!xpos=\!t\!xC \advance\!xpos by \!xB
        \!xpos=\!t\!xpos \advance \!xpos by \!xS
%                                             ** evaluate y(t)
      \!ypos=\!t\!yC \advance\!ypos by \!yB
        \!ypos=\!t\!ypos \advance \!ypos by \!yS
      \!plotifinbounds%                       ** plot point if in bounds
      \advance\!distacross \plotsymbolspacing%** advance arc length for next pt
      \!advancedashing%                       ** see "linear"
    \repeat  
    \advance \!distacross by -\!arclength%    ** prepare for next interval 
  \fi
  \!xS=\!xE%              ** shift ending points to starting points
  \!yS=\!yE
  \ignorespaces}

% ** \!getf -- Calculates sqrt(x'(t)**2 + y'(t)**2) and advances
% **   x'(t) and y'(t)
\def\!getf{\!Pythag\!xprime\!yprime\!dimenA%
  \advance\!xprime by \!dxprime
  \advance\!yprime by \!dyprime}

% ** \!initinverseinterp -- initializes for inverse quadratic interpolation
% ** of arc length provided  1/3 < midarclength/arclength < 2/3; otherwise
% ** initializes for inverse linear interpolation.
\def\!initinverseinterp{%
  \ifdim\!arclength>\!zpt
    \!divide{8\!midarclength}\!arclength\!dimenE% ** dimE=8w=8r/s, where  r 
%                                               **  = midarclength, s=arclength
% **  Test for  w  out of range:  w<1/3  or w>2/3
    \ifdim\!dimenE<\!wmin \!setinverselinear
    \else 
      \ifdim\!dimenE>\!wmax \!setinverselinear
      \else%                                    ** w  in range: initialize
        \def\!inverseinterp{\!inversequad}\ignorespaces
%
% **     Calculate the coefficients  \!beta  and  \!gamma  of the quadratic
% **                    t = \!beta*v + \!gamma*v**2
% **     taking the values  t=0, 1/2, 1  at  v=0, w==r/s, 1  respectively:
% **        \!beta = (1/2 - w**2)/[w(1-w)] 
% **        \!gamma = 1 - beta.
%
         \!removept\!dimenE\!Ew%           **  8w, without "pt"
         \!dimenF=-\!Ew\!dimenE%           **  -(8w)**2
         \advance\!dimenF by 32pt%         **  32 - (8w)**2
         \!dimenG=8pt 
         \advance\!dimenG by -\!dimenE%    **  8 - 8w
         \!dimenG=\!Ew\!dimenG%            **  (8w)*(8-8w)
         \!divide\!dimenF\!dimenG\!beta%   **  beta = (32-(8w)**2)/(8w(8-8w))
%                                          **       = (1/2 - w**2)/(w(1-w))
         \!gamma=1pt
         \advance \!gamma by -\!beta%      **  gamma = 1-beta
      \fi%       ** end of the \ifdim\!dimenE>\!wmax
    \fi%         ** end of the \ifdim\!dimenE<\!wmin
  \fi%           ** end of the \ifdim\!arclength>\!zpt
  \ignorespaces}

% ** For 0 <= t <= 1, let AL(t) = arclength[0,t]/arclength[0,1]; note
% ** AL(0)=0, AL(1/2)=midarclength/arclength, AL(1)=1.  This routine
% ** calculates an approximation to AL^{-1}(distance across/arclength),
% ** using the assumption that AL^{-1} is quadratic.  Specifically, 
% ** it finds  t  such that
% **    AL^{-1}(v) =. t = v*(\!beta + \!gamma*v)
% ** where  \!beta  and  \!gamma  are set by \!initinv, and where
% ** v=distance across/arclength
\def\!inversequad{%
  \!divide\!distacross\!arclength\!dimenG%   ** dimG = v = distacross/arclength
  \!removept\!dimenG\!v%                     ** v, without "pt"
  \!dimenG=\!v\!gamma%                       ** gamma*v
  \advance\!dimenG by \!beta%                ** beta + gamma*v
  \!dimenG=\!v\!dimenG%                      ** t = v*(beta + gamma*v)
  \!removept\!dimenG\!t}%                    ** t, without "pt"

% ** When  w <= 1/3  or  w >= 2/3, the following routine writes (using
% ** plain TEK's \wlog command) a warning message on the user's log file,
% ** and initializes for inverse linear interpolation on arc length.
\def\!setinverselinear{%
  \def\!inverseinterp{\!inverselinear}%
  \divide\!dimenE by 8 \!removept\!dimenE\!t
  \!countC=\!intervalno \multiply \!countC 2
  \!countB=\!countC     \advance \!countB -1
  \!countA=\!countB     \advance \!countA -1
  \wlog{\the\!countB th point (\!xmidpt,\!ymidpt) being plotted 
    doesn't lie in the}%
  \wlog{ middle third of the arc between the \the\!countA th 
    and \the\!countC th points:}%
  \wlog{ [arc length \the\!countA\space to \the\!countB]/[arc length 
    \the \!countA\space to \the\!countC]=\!t.}%
  \ignorespaces}
 
% **  Inverse linear interpolation
\def\!inverselinear{% 
  \!divide\!distacross\!arclength\!dimenG
  \!removept\!dimenG\!t}

% **************************************
% **  ROTATIONS  (Handles rotations) ***
% **************************************
 
% ** User commands
% **   \startrotation [by COS_OF_ANGLE SIN_OF_ANGLE] [about XPIVOT YPIVOT]
% **   \stoprotation

% **   \startrotation [by COS_OF_ANGLE SIN_OF_ANGLE] [about XPIVOT YPIVOT]
% ** Future (XCOORD,YCOORD)'s will be rotated about (XPIVOT,YPIVOT) 
% ** by the angle with the give COS and SIN. Both fields are optional.
% ** [COS,SIN] defaults to previous value, or (1,0).
% ** (XPIVOT,YPIVOT) defaults to previous value, or (0,0)
% ** You can't change the coordinate system in the scope of a rotation.
% ** See Subsection 9.1 of the manual.
\def\startrotation{%
  \let\!rotateaboutpivot=\!!rotateaboutpivot
  \let\!rotateonly=\!!rotateonly
  \!ifnextchar{b}{\!getsincos }%
    {\!getsincos by {\!cosrotationangle} {\!sinrotationangle} }}
\def\!getsincos by #1 #2 {%
  \edef\!cosrotationangle{#1}%
  \edef\!sinrotationangle{#2}%
  \!ifcoordmode 
    \let\!ROnext=\!ccheckforpivot
  \else
    \let\!ROnext=\!dcheckforpivot
  \fi
  \!ROnext}
\def\!ccheckforpivot{%
  \!ifnextchar{a}{\!cgetpivot}%
    {\!cgetpivot about {\!xpivotcoord} {\!ypivotcoord} }}
\def\!cgetpivot about #1 #2 {%
  \edef\!xpivotcoord{#1}%
  \edef\!ypivotcoord{#2}%
  \!xpivot=#1\!xunit  \!ypivot=#2\!yunit
  \ignorespaces}
\def\!dcheckforpivot{%
  \!ifnextchar{a}{\!dgetpivot}{\ignorespaces}}
\def\!dgetpivot about #1 #2 {%
  \!xpivot=#1\relax  \!ypivot=#2\relax
  \ignorespaces}

% ** Following terminates rotation.
% ** See Subsection 9.1 of the manual.
\def\stoprotation{%
  \let\!rotateaboutpivot=\!!!rotateaboutpivot
  \let\!rotateonly=\!!!rotateonly
  \ignorespaces}
 
% ** !!rotateaboutpivot{XREG}{YREG}
% ** XREG <-- xpvt + cos(angle)*(XREG-xpvt) - sin(angle)*(YREG-ypvt)
% ** YREG <-- ypvt + cos(angle)*(YREG-ypvt) + sin(angle)*(XREG-xpvt)
% ** XREG,YREG are dimension registers. Can't be \!dimenA to \!dimenD
\def\!!rotateaboutpivot#1#2{%
  \!dimenA=#1\relax  \advance\!dimenA -\!xpivot
  \!dimenB=#2\relax  \advance\!dimenB -\!ypivot
  \!dimenC=\!cosrotationangle\!dimenA
    \advance \!dimenC -\!sinrotationangle\!dimenB
  \!dimenD=\!cosrotationangle\!dimenB
    \advance \!dimenD  \!sinrotationangle\!dimenA
  \advance\!dimenC \!xpivot  \advance\!dimenD \!ypivot
  #1=\!dimenC  #2=\!dimenD
  \ignorespaces}

% ** \!!rotateonly{XREG}{YREG}
% ** Like \!!rotateaboutpivot, but with a pivot of  (0,0)
\def\!!rotateonly#1#2{%
  \!dimenA=#1\relax  \!dimenB=#2\relax 
  \!dimenC=\!cosrotationangle\!dimenA
    \advance \!dimenC -\!rotsign\!sinrotationangle\!dimenB
  \!dimenD=\!cosrotationangle\!dimenB
    \advance \!dimenD  \!rotsign\!sinrotationangle\!dimenA
  #1=\!dimenC  #2=\!dimenD
  \ignorespaces}
\def\!rotsign{}
\def\!!!rotateaboutpivot#1#2{\relax}
\def\!!!rotateonly#1#2{\relax}
\stoprotation

\def\!reverserotateonly#1#2{%
  \def\!rotsign{-}%
  \!rotateonly{#1}{#2}%
  \def\!rotsign{}%
  \ignorespaces}

\def\!getspan span <#1>{%
  \!dshade=#1\relax
  \!ifcoordmode 
    \let\!GRnext=\!GRccheckforAP
  \else
    \let\!GRnext=\!GRdcheckforAP
  \fi
  \!GRnext}
\def\!GRccheckforAP{%
  \!ifnextchar{p}{\!cgetanchor }
    {\!cgetanchor point at {\!xshadesave} {\!yshadesave} }}
\def\!cgetanchor point at #1 #2 {%
  \edef\!xshadesave{#1}\edef\!yshadesave{#2}%
  \!xshade=\!xshadesave\!xunit  \!yshade=\!yshadesave\!yunit
  \ignorespaces}
\def\!GRdcheckforAP{%
  \!ifnextchar{p}{\!dgetanchor}%
    {\ignorespaces}}
\def\!dgetanchor point at #1 #2 {%
  \!xshade=#1\relax  \!yshade=#2\relax
  \ignorespaces}

% **  \setshadesymbol  [<LS, RS, BS, TS>] ({SHADESYMBOL}
% **    <XDIMEN,YDIMEN> [ORIENTATION])
% **  Saves SHADESYMBOL away in an hbox for use with shading routines.
% **  A shade symbol will not be plotted if its plot position comes within
% **    distance LS of the left boundary,  RS of the right boundary,  TS of the
% **    top boundary,  BS of the bottom boundary.  These parameters have 
% **    default values that should work in most cases (see below).
% **    To override a default value, specify the replacement value
% **    in the appropriate subfield of the shrinkages field.
% **    0pt may be coded as  "z" (without the quotes).  To accept a
% **    default value, leave the field empty.  Thus
% **      [,z,,5pt]  sets  LS=default, RS=0pt, BS=default, TS=5pt .
% **    Skipping the shrinkages field accepts all the defaults.
% **  See Subsection 7.1 of the manual.
\def\setshadesymbol{%
  \!ifnextchar<{\!setshadesymbol}{\!setshadesymbol<,,,> }}

\def\!setshadesymbol <#1,#2,#3,#4> (#5#6){%
% **  set the shadesymbol
  \!setputobject{#5}{#6}%                        
  \setbox\!shadesymbol=\box\!putobject%
  \!shadesymbolxshift=\!xshift \!shadesymbolyshift=\!yshift
%
% **  set the shrinkages
  \!dimenA=\!xshift \advance\!dimenA \!smidge% ** default LS = xshift - smidge
  \!override\!dimenA{#1}\!lshrinkage%         
  \!dimenA=\!wd \advance \!dimenA -\!xshift%   ** default RS = width - xshift
    \advance\!dimenA \!smidge%                                  - smidge
    \!override\!dimenA{#2}\!rshrinkage
  \!dimenA=\!dp \advance \!dimenA \!yshift%    ** default BS = depth + yshift
    \advance\!dimenA \!smidge%                                  - smidge
    \!override\!dimenA{#3}\!bshrinkage
  \!dimenA=\!ht \advance \!dimenA -\!yshift%   ** default TS = height - yshift
    \advance\!dimenA \!smidge%                                  - smidge
    \!override\!dimenA{#4}\!tshrinkage
  \ignorespaces}
\def\!smidge{-.2pt}%

% ** \!override{NOMINAL DIMEN}{REPLACEMENT DIMEN}{DIMEN}
% ** Overrides the NOMINAL DIMEN by the REPLACEMENT DIMEN to produce DIMEN,
% ** according to the following rules:
% **   REPLACEMENT DIMEN empty: DIMEN <-- NOMINAL DIMEN
% **   REPLACEMENT DIMEN z:     DIMEN <-- 0pt
% **   otherwise:               DIMEN <-- REPLACEMENT DIMEN
% ** DIMEN must be a dimension register
\def\!override#1#2#3{%
  \edef\!!override{#2}% 
  \ifx \!!override\empty
    #3=#1\relax
  \else
    \if z\!!override
      #3=\!zpt
    \else
      \ifx \!!override\!blankz
        #3=\!zpt
      \else
        #3=#2\relax
      \fi
    \fi
  \fi
  \ignorespaces}
\def\!blankz{ z}

\setshadesymbol ({\fiverm .})%       ** initialize plotsymbol
%                                    ** \fivesy ^^B  is a small cross

% ** \!startvshade [at] (xS,ybS,ytS)
% ** Initiates vertical shading mode
\def\!startvshade#1(#2,#3,#4){%
  \let\!!xunit=\!xunit%
  \let\!!yunit=\!yunit%
  \let\!!xshade=\!xshade%
  \let\!!yshade=\!yshade%
  \def\!getshrinkages{\!vgetshrinkages}%
  \let\!setshadelocation=\!vsetshadelocation%
  \!xS=\!M{#2}\!!xunit
  \!ybS=\!M{#3}\!!yunit
  \!ytS=\!M{#4}\!!yunit
  \!shadexorigin=\!xorigin  \advance \!shadexorigin \!shadesymbolxshift
  \!shadeyorigin=\!yorigin  \advance \!shadeyorigin \!shadesymbolyshift
  \ignorespaces}
 
% ** \!starthshade [at] (yS,xlS,xrS)
% ** Initiates horizontal shading mode
\def\!starthshade#1(#2,#3,#4){%
  \let\!!xunit=\!yunit%
  \let\!!yunit=\!xunit%
  \let\!!xshade=\!yshade%
  \let\!!yshade=\!xshade%
  \def\!getshrinkages{\!hgetshrinkages}%
  \let\!setshadelocation=\!hsetshadelocation%
  \!xS=\!M{#2}\!!xunit
  \!ybS=\!M{#3}\!!yunit
  \!ytS=\!M{#4}\!!yunit
  \!shadexorigin=\!xorigin  \advance \!shadexorigin \!shadesymbolxshift
  \!shadeyorigin=\!yorigin  \advance \!shadeyorigin \!shadesymbolyshift
  \ignorespaces}

% **  \!lattice{ANCHOR}{SPAN}{LOCATION}{INDEX}{LATTICE LOCATION}
% **  Consider the lattice with points  ANCHOR + j*SPAN. This routine determines
% **  the index  k  of the smallest lattice point >= LOCATION, and sets
% **  LATTICE LOCATION = ANCHOR + k*SPAN.
% **  INDEX is assumed to be a count register, LATTICE LOCATION a dimen reg.
\def\!lattice#1#2#3#4#5{%
  \!dimenA=#1%                        ** dimA = ANCHOR
  \!dimenB=#2%                        ** dimB = SPAN  (assumed > 0pt)
  \!countB=\!dimenB%                  ** ctB  = SPAN, as a count
%
% ** Determine index of smallest lattice point >= LOCATION
  \!dimenC=#3%                        ** dimC = LOCATION
  \advance\!dimenC -\!dimenA%         ** now dimC = LOCATION-ANCHOR
  \!countA=\!dimenC%                  ** ctA = above, as a count
  \divide\!countA \!countB%           ** now ctA = desired index, if dimC <= 0
  \ifdim\!dimenC>\!zpt
    \!dimenD=\!countA\!dimenB%        ** (tentative k)*span
    \ifdim\!dimenD<\!dimenC%          ** if this is false, ctA = desired index
      \advance\!countA 1 %            ** if true, have to add 1
    \fi
  \fi
  \!dimenC=\!countA\!dimenB%          ** lattice location = anchor + ctA*span
    \advance\!dimenC \!dimenA
  #4=\!countA%                        ** the desired index
  #5=\!dimenC%                        ** corresponding lattice location
  \ignorespaces}

% ** \!qshade [with shrinkages] [[LS,RS,BS,TS]]
% ***** during vertical shading:
% **    [the region from (xS,ybS,ytS) to] (xM,ybM,ytM) [and] (xE,ybE,ytE)
% ** Shades the region {(x,y): xS <= x <= xE, yb(x) <= y <= yt(x)}, where 
% **   yb is the quadratic thru (xS,ybS) & (xM,ybM) & (xE,ybE)
% **   yt is the quadratic thru (xS,ytS) & (xM,ybM) & (xE,ytE)
% ** xS,ybS,ytS are either given by \!startvshade or carried over
% **   as the ending values of the immediately preceding \!qshade.
% ** For the interpretation of LS, RS, BS, & TS, see \setshadesymbol. The
% **   values set there can be overridden, for the course of this \!qshade
% **   only, in the same manner as overrides are specified for
% **   \setshadesymbol.
% ***** during horizontal shading:
% **    [the region from (yS,xlS,xrS) to] (yM,xlM,xrM) [and] (yE,xlE,xrE)
\def\!qshade#1(#2,#3,#4)#5(#6,#7,#8){%
  \!xM=\!M{#2}\!!xunit
  \!ybM=\!M{#3}\!!yunit
  \!ytM=\!M{#4}\!!yunit
  \!xE=\!M{#6}\!!xunit
  \!ybE=\!M{#7}\!!yunit
  \!ytE=\!M{#8}\!!yunit
  \!getcoeffs\!xS\!ybS\!xM\!ybM\!xE\!ybE\!ybB\!ybC%**Get coefficients B & C for
  \!getcoeffs\!xS\!ytS\!xM\!ytM\!xE\!ytE\!ytB\!ytC%**y=y0 + B(x-X0) + C(x-X0)**2
  \def\!getylimits{\!qgetylimits}%
  \!shade{#1}\ignorespaces}
 
% ** \!lshade ... (xE,ybE,ytE)
% ** This is like \!qshade, but the top and bottom boundaries are linear,
% ** rather than quadratic.
\def\!lshade#1(#2,#3,#4){%
  \!xE=\!M{#2}\!!xunit
  \!ybE=\!M{#3}\!!yunit
  \!ytE=\!M{#4}\!!yunit
  \!dimenE=\!xE  \advance \!dimenE -\!xS%   ** xE-xS
  \!dimenC=\!ytE \advance \!dimenC -\!ytS%  ** ytE-ytS
  \!divide\!dimenC\!dimenE\!ytB%            ** ytB = (ytE-ytS)/(xE-xS)
  \!dimenC=\!ybE \advance \!dimenC -\!ybS%  ** ybE-ybS
  \!divide\!dimenC\!dimenE\!ybB%            ** ybB = (ybE-ybS)/(xE-xS)
  \def\!getylimits{\!lgetylimits}%
  \!shade{#1}\ignorespaces}
 
% **  \!getcoeffs{X0}{Y0}{X1}{Y1}{X2}{Y2}{B}{C}
% **  Finds  B  and  C  such that the quadratic  y = Y0 + B(x-X0) + C(x-X0)**2
% **  passes through (X1,Y1) and (X2,Y2):  when X0=0=Y0, the formulas are:
% **                   B = S1 - X1*C,   C = (S2-S1)/X2
% **  with
% **                 S1 = Y1/X1,   S2 = (Y2-Y1)/(X2-X1).
\def\!getcoeffs#1#2#3#4#5#6#7#8{% 
  \!dimenC=#4\advance \!dimenC -#2%            ** dimC=Y1-Y0
  \!dimenE=#3\advance \!dimenE -#1%            ** dimE=X1-X0
  \!divide\!dimenC\!dimenE\!dimenF%            ** dimF=S1
  \!dimenC=#6\advance \!dimenC -#4%            ** dimC=Y2-Y1
  \!dimenH=#5\advance \!dimenH -#3%            ** dimH=X2-X1
  \!divide\!dimenC\!dimenH\!dimenG%            ** dimG=S2
  \advance\!dimenG -\!dimenF%                  ** dimG=S2-S1
  \advance \!dimenH \!dimenE%                  ** dimH=X2-X0
  \!divide\!dimenG\!dimenH#8%                  ** C=(S2-S1)/(X2-X0)
  \!removept#8\!t%                             ** C, without "pt"
  #7=-\!t\!dimenE%                             ** -C*(X1-X0)
  \advance #7\!dimenF%                         ** B=S1-C*(X1-X0)
  \ignorespaces}

\def\!shade#1{%
% ** Get LS,RS,BS,TS for this panel
  \!getshrinkages#1<,,,>\!nil% %       ** now effective LS=dimE, RS=dimF,
%                                      **   BS=dimG, TS=dimH
  \advance \!dimenE \!xS%              ** now dimE=xS+LS
  \!lattice\!!xshade\!dshade\!dimenE%  ** set parity=index of left-mst x-lattice
    \!parity\!xpos%                    **   point >= xS+LS, xpos=its location
  \!dimenF=-\!dimenF%                  ** set dimF=xE-RS
    \advance\!dimenF \!xE
  \!loop\!not{\ifdim\!xpos>\!dimenF}%  ** loop over x-lattice points <= xE-RS
    \!shadecolumn%                 
    \advance\!xpos \!dshade%           ** move over to next column
    \advance\!parity 1%                ** increase index of x-point
  \repeat
  \!xS=\!xE%                           ** shift ending values to starting values
  \!ybS=\!ybE
  \!ytS=\!ytE
  \ignorespaces}

\def\!vgetshrinkages#1<#2,#3,#4,#5>#6\!nil{%
  \!override\!lshrinkage{#2}\!dimenE
  \!override\!rshrinkage{#3}\!dimenF
  \!override\!bshrinkage{#4}\!dimenG
  \!override\!tshrinkage{#5}\!dimenH
  \ignorespaces}
\def\!hgetshrinkages#1<#2,#3,#4,#5>#6\!nil{%
  \!override\!lshrinkage{#2}\!dimenG
  \!override\!rshrinkage{#3}\!dimenH
  \!override\!bshrinkage{#4}\!dimenE
  \!override\!tshrinkage{#5}\!dimenF
  \ignorespaces}

\def\!shadecolumn{%
  \!dxpos=\!xpos
  \advance\!dxpos -\!xS%            ** dx = x - xS
  \!removept\!dxpos\!dx%            ** ditto, without "pt"
  \!getylimits%                     ** get top and bottom y-values
  \advance\!ytpos -\!dimenH%        ** less TS
  \advance\!ybpos \!dimenG%         ** plus BS
  \!yloc=\!!yshade%                 ** get anchor point for this column
  \ifodd\!parity 
     \advance\!yloc \!dshade
  \fi
  \!lattice\!yloc{2\!dshade}\!ybpos%
    \!countA\!ypos%                 ** ypos=smallest y point for this column
  \!dimenA=-\!shadexorigin \advance \!dimenA \!xpos%      ** over
  \loop\!not{\ifdim\!ypos>\!ytpos}% ** loop over ypos <= yt(t)
    \!setshadelocation%             ** vmode: xloc=xpos, yloc=ypos 
%                                   ** hmode: xloc=ypos, yloc=xpos 
    \!rotateaboutpivot\!xloc\!yloc%
    \!dimenA=-\!shadexorigin \advance \!dimenA \!xloc%    ** over
    \!dimenB=-\!shadeyorigin \advance \!dimenB \!yloc%    ** up
    \kern\!dimenA \raise\!dimenB\copy\!shadesymbol \kern-\!dimenA
    \advance\!ypos 2\!dshade
  \repeat
  \ignorespaces}
 
\def\!qgetylimits{%
  \!dimenA=\!dx\!ytC              
  \advance\!dimenA \!ytB%         ** yt(t)=ytS + dx*(Bt + dx*Ct)
  \!ytpos=\!dx\!dimenA
  \advance\!ytpos \!ytS
  \!dimenA=\!dx\!ybC              
  \advance\!dimenA \!ybB%         ** yb(t)=ybS + dx*(Bb + dx*Cb)
  \!ybpos=\!dx\!dimenA
  \advance\!ybpos \!ybS}
 
\def\!lgetylimits{%
  \!ytpos=\!dx\!ytB%              ** yt(t)=ytS + dx*Bt
  \advance\!ytpos \!ytS
  \!ybpos=\!dx\!ybB%              ** yb(t)=ybS + dx*Bb
  \advance\!ybpos \!ybS}
 
\def\!vsetshadelocation{%         ** vmode: xloc=xpos, yloc=ypos 
  \!xloc=\!xpos
  \!yloc=\!ypos}
\def\!hsetshadelocation{%         ** hmode: xloc=ypos, yloc=xpos 
  \!xloc=\!ypos
  \!yloc=\!xpos}

% **************************************
% *** TICKS  (Draws ticks on graphs) ***
% **************************************

% ** User commands
% **   \ticksout
% **   \ticksin
% **   \gridlines
% **   \nogridlines
% **   \loggedticks
% **   \unloggesticks
% ** See Subsection 3.4 of the manual

% ** The following is an option of the \axis command
% **   ticks 
% **     [in] [out] 
% **     [long] [short] [length <LENGTH>] 
% **     [width <WIDTH>]
% **     [andacross] [butnotacross] 
% **     [logged] [unlogged] 
% **     [unlabeled] [numbered] [withvalues VALUE1 VALUE2 ... VALUEk / ]
% **     [quantity Q] [at LOC1 LOC2 ... LOCk / ] [from LOC1 to LOC2 by
% **       LOC_INCREMENT]
% ** See Subsection 3.2 of the manual for the rules.

% ** The various options of the  tick  field are processed by the
% ** \!nextkeyword  command defined below.
% ** For example, `\!nextkeyword short '  expands to  `\!ticksshort',
% ** while `\!nextkeyword withvalues' expands to `\!tickswithvalues'.

\def\!axisticks {%
  \def\!nextkeyword##1 {%
    \expandafter\ifx\csname !ticks##1\endcsname \relax
      \def\!next{\!fixkeyword{##1}}%
    \else
      \def\!next{\csname !ticks##1\endcsname}%
    \fi
    \!next}%
  \!axissetup
    \def\!axissetup{\relax}%
  \edef\!ticksinoutsign{\!ticksinoutSign}%
  \!ticklength=\longticklength
  \!tickwidth=\linethickness
  \!gridlinestatus
  \!setticktransform
  \!maketick
  \!tickcase=0
  \def\!LTlist{}%
  \!nextkeyword}

\def\ticksout{%
  \def\!ticksinoutSign{+}}

\ticksout

\def\nogridlines{%
  \def\!gridlinestatus{\!gridlinestoofalse}}
\nogridlines

\def\loggedticks{%
  \def\!setticktransform{\let\!ticktransform=\!logten}}
\def\unloggedticks{%
  \def\!setticktransform{\let\!ticktransform=\!donothing}}
\def\!donothing#1#2{\def#2{#1}}
\unloggedticks

% ** \!ticks/ : terminates read of tick options
\expandafter\def\csname !ticks/\endcsname{%
  \!not {\ifx \!LTlist\empty}
    \!placetickvalues
  \fi
  \def\!tickvalueslist{}%
  \def\!LTlist{}%
  \expandafter\csname !axis/\endcsname}

\def\!maketick{%
  \setbox\!boxA=\hbox{%
    \beginpicture
      \!setdimenmode
      \setcoordinatesystem point at {\!zpt} {\!zpt}   
      \linethickness=\!tickwidth
      \ifdim\!ticklength>\!zpt
        \putrule from {\!zpt} {\!zpt} to
          {\!ticksinoutsign\!tickxsign\!ticklength}
          {\!ticksinoutsign\!tickysign\!ticklength}
      \fi
      \if!gridlinestoo
        \putrule from {\!zpt} {\!zpt} to
          {-\!tickxsign\!xaxislength} {-\!tickysign\!yaxislength}
      \fi
    \endpicturesave <\!Xsave,\!Ysave>}%
    \wd\!boxA=\!zpt}
  
\def\!ticksin{%
  \def\!ticksinoutsign{-}%
  \!maketick
  \!nextkeyword}

\def\!ticksout{%
  \def\!ticksinoutsign{+}%
  \!maketick
  \!nextkeyword}

\def\!tickslength<#1> {%
  \!ticklength=#1\relax
  \!maketick
  \!nextkeyword}

\def\!tickslong{%
  \!tickslength<\longticklength> }

\def\!ticksshort{%
  \!tickslength<\shortticklength> }

\def\!tickswidth<#1> {%
  \!tickwidth=#1\relax
  \!maketick
  \!nextkeyword}

\def\!ticksandacross{%
  \!gridlinestootrue
  \!maketick
  \!nextkeyword}

\def\!ticksbutnotacross{%
  \!gridlinestoofalse
  \!maketick
  \!nextkeyword}

\def\!tickslogged{%
  \let\!ticktransform=\!logten
  \!nextkeyword}

\def\!ticksunlogged{%
  \let\!ticktransform=\!donothing
  \!nextkeyword}

\def\!ticksunlabeled{%
  \!tickcase=0
  \!nextkeyword}

\def\!ticksnumbered{%
  \!tickcase=1
  \!nextkeyword}

\def\!tickswithvalues#1/ {%
  \edef\!tickvalueslist{#1! /}%
  \!tickcase=2
  \!nextkeyword}

\def\!ticksquantity#1 {%
  \ifnum #1>1
    \!updatetickoffset
    \!countA=#1\relax
    \advance \!countA -1
    \!ticklocationincr=\!axisLength
      \divide \!ticklocationincr \!countA
    \!ticklocation=\!axisstart
    \loop \!not{\ifdim \!ticklocation>\!axisend}
      \!placetick\!ticklocation
      \ifcase\!tickcase
          \relax %  Case 0: no labels
        \or
          \relax %  Case 1: numbered -- not available here
        \or
          \expandafter\!gettickvaluefrom\!tickvalueslist
          \edef\!tickfield{{\the\!ticklocation}{\!value}}%
          \expandafter\!listaddon\expandafter{\!tickfield}\!LTlist%
      \fi
      \advance \!ticklocation \!ticklocationincr
    \repeat
  \fi
  \!nextkeyword}

\def\!ticksat#1 {%
  \!updatetickoffset
  \edef\!Loc{#1}%
  \if /\!Loc
    \def\next{\!nextkeyword}%
  \else
    \!ticksincommon
    \def\next{\!ticksat}%
  \fi
  \next}    
      
\def\!ticksfrom#1 to #2 by #3 {%
  \!updatetickoffset
  \edef\!arg{#3}%
  \expandafter\!separate\!arg\!nil
  \!scalefactor=1
  \expandafter\!countfigures\!arg/
  \edef\!arg{#1}%
  \!scaleup\!arg by\!scalefactor to\!countE
  \edef\!arg{#2}%
  \!scaleup\!arg by\!scalefactor to\!countF
  \edef\!arg{#3}%
  \!scaleup\!arg by\!scalefactor to\!countG
  \loop \!not{\ifnum\!countE>\!countF}
    \ifnum\!scalefactor=1
      \edef\!Loc{\the\!countE}%
    \else
      \!scaledown\!countE by\!scalefactor to\!Loc
    \fi
    \!ticksincommon
    \advance \!countE \!countG
  \repeat
  \!nextkeyword}

\def\!updatetickoffset{%
  \!dimenA=\!ticksinoutsign\!ticklength
  \ifdim \!dimenA>\!offset
    \!offset=\!dimenA
  \fi}

\def\!placetick#1{%
  \if!xswitch
    \!xpos=#1\relax
    \!ypos=\!axisylevel
  \else
    \!xpos=\!axisxlevel
    \!ypos=#1\relax
  \fi
  \advance\!xpos \!Xsave
  \advance\!ypos \!Ysave
  \kern\!xpos\raise\!ypos\copy\!boxA\kern-\!xpos
  \ignorespaces}

\def\!gettickvaluefrom#1 #2 /{%
  \edef\!value{#1}%
  \edef\!tickvalueslist{#2 /}%
  \ifx \!tickvalueslist\!endtickvaluelist
    \!tickcase=0
  \fi}
\def\!endtickvaluelist{! /}

\def\!ticksincommon{%
  \!ticktransform\!Loc\!t
  \!ticklocation=\!t\!!unit
  \advance\!ticklocation -\!!origin
  \!placetick\!ticklocation
  \ifcase\!tickcase
    \relax % Case 0: no labels
  \or %      Case 1: numbered
    \ifdim\!ticklocation<-\!!origin
      \edef\!Loc{$\!Loc$}%
    \fi
    \edef\!tickfield{{\the\!ticklocation}{\!Loc}}%
    \expandafter\!listaddon\expandafter{\!tickfield}\!LTlist%
  \or %      Case 2: labeled
    \expandafter\!gettickvaluefrom\!tickvalueslist
    \edef\!tickfield{{\the\!ticklocation}{\!value}}%
    \expandafter\!listaddon\expandafter{\!tickfield}\!LTlist%
  \fi}

\def\!separate#1\!nil{%
  \!ifnextchar{-}{\!!separate}{\!!!separate}#1\!nil}
\def\!!separate-#1\!nil{%
  \def\!sign{-}%
  \!!!!separate#1..\!nil}
\def\!!!separate#1\!nil{%
  \def\!sign{+}%
  \!!!!separate#1..\!nil}
\def\!!!!separate#1.#2.#3\!nil{%
  \def\!arg{#1}%
  \ifx\!arg\!empty
    \!countA=0
  \else
    \!countA=\!arg
  \fi
  \def\!arg{#2}%
  \ifx\!arg\!empty
    \!countB=0
  \else
    \!countB=\!arg
  \fi}
 
\def\!countfigures#1{%
  \if #1/%
    \def\!next{\ignorespaces}%
  \else
    \multiply\!scalefactor 10
    \def\!next{\!countfigures}%
  \fi
  \!next}

\def\!scaleup#1by#2to#3{%
  \expandafter\!separate#1\!nil
  \multiply\!countA #2\relax
  \advance\!countA \!countB
  \if -\!sign
    \!countA=-\!countA
  \fi
  #3=\!countA
  \ignorespaces}

\def\!scaledown#1by#2to#3{%
  \!countA=#1\relax%                          ** get original #
  \ifnum \!countA<0 %                         ** take abs value,
    \def\!sign{-}%                            **   remember sign
    \!countA=-\!countA
  \else
    \def\!sign{}%
  \fi
  \!countB=\!countA%                          ** copy |#|
  \divide\!countB #2\relax%                   ** integer part (|#|/sf)
  \!countC=\!countB%                          ** get sf * (|#|/sf)
    \multiply\!countC #2\relax
  \advance \!countA -\!countC%                ** ctA is now remainder
  \edef#3{\!sign\the\!countB.}%               ** +- integerpart.
  \!countC=\!countA %                         ** Tack on proper number
  \ifnum\!countC=0 %                          **   of zeros after .
    \!countC=1
  \fi
  \multiply\!countC 10
  \!loop \ifnum #2>\!countC
    \edef#3{#3\!zero}%
    \multiply\!countC 10
  \repeat
  \edef#3{#3\the\!countA}%                    ** Add on rest of remainder
  \ignorespaces}

\def\!placetickvalues{%
  \advance\!offset \tickstovaluesleading
  \if!xswitch
    \setbox\!boxA=\hbox{%
      \def\\##1##2{%
        \!dimenput {##2} [B] (##1,\!axisylevel)}%
      \beginpicture 
        \!LTlist
      \endpicturesave <\!Xsave,\!Ysave>}%
    \!dimenA=\!axisylevel
      \advance\!dimenA -\!Ysave
      \advance\!dimenA \!tickysign\!offset
      \if -\!tickysign
        \advance\!dimenA -\ht\!boxA
      \else
        \advance\!dimenA  \dp\!boxA
      \fi
    \advance\!offset \ht\!boxA 
      \advance\!offset \dp\!boxA
    \!dimenput {\box\!boxA} [Bl] <\!Xsave,\!Ysave> (\!zpt,\!dimenA)
  \else
    \setbox\!boxA=\hbox{%
      \def\\##1##2{%
        \!dimenput {##2} [r] (\!axisxlevel,##1)}%
      \beginpicture 
        \!LTlist
      \endpicturesave <\!Xsave,\!Ysave>}%
    \!dimenA=\!axisxlevel
      \advance\!dimenA -\!Xsave
      \advance\!dimenA \!tickxsign\!offset
      \if -\!tickxsign
        \advance\!dimenA -\wd\!boxA
      \fi
    \advance\!offset \wd\!boxA
    \!dimenput {\box\!boxA} [Bl] <\!Xsave,\!Ysave> (\!dimenA,\!zpt)
  \fi}

\normalgraphs
\catcode`!=12 %  *****  THIS MUST NEVER BE OMITTED

% ********************** START OF POSTPICTEX.TEX ********************
% This is postpictex.tex  Version 1.1  9/10/87. See section 10 of the manual.

% To use the PiCTeX macros under LaTeX, you first need to \input the
% file prepictex.tex, then the main corpus of PiCTeX macros (pictex.tex), 
% and finally this file.  Do not \input the file latexpicobjs.tex.
 
\catcode`@=11 \catcode`!=11
  
% Save meanings of PiCTeX keywords that duplicate LaTeX keywords
\let\!pictexendpicture=\endpicture 
\let\!pictexframe=\frame
\let\!pictexlinethickness=\linethickness
\let\!pictexmultiput=\multiput
\let\!pictexput=\put

% Redefine the PiCTeX \beginpicture macro
\def\beginpicture{%
  \setbox\!picbox=\hbox\bgroup%
  \let\endpicture=\!pictexendpicture
  \let\frame=\!pictexframe
  \let\linethickness=\!pictexlinethickness
  \let\multiput=\!pictexmultiput
  \let\put=\!pictexput
  \let\input=\@@input   % \@@input is LaTeX's saved version of TeX's primitive
  \!xleft=\maxdimen  
  \!xright=-\maxdimen
  \!ybot=\maxdimen
  \!ytop=-\maxdimen}

% Reestablish LaTeX's meaning of \frame. This makes
% PiCTeX's meaning of \frame available only inside a PiCture.
\let\frame=\!latexframe

% Make PiCTeX's meaning of \frame available everywhere in the
% guise of \pictexframe
\let\pictexframe=\!pictexframe

% Now do the same for \linethickness
\let\linethickness=\!latexlinethickness
\let\pictexlinethickness=\!pictexlinethickness

% Reset LaTeX's default meaning of \\
\let\\=\@normalcr
\catcode`@=12 \catcode`!=12
% ********************** END OF POSTPICTEX.TEX ********************

\def\boxprec{
\text{
\beginpicture
\setcoordinatesystem units <2.5mm,2.5mm> point at 0 0
\plot 0 0  1 0  1 1  0 1  0 0 /
\put{$\scriptstyle \prec$} at 0.5 0.5
\endpicture
}}

\def\boxsucc{
\text{
\beginpicture
\setcoordinatesystem units <2.5mm,2.5mm> point at 0 0
\plot 0 0  1 0  1 1  0 1  0 0 /
\put{$\scriptstyle \succ$} at 0.5 0.5
\endpicture
}}

\begin{document}

\title[Polynomial Ring over a Commutative Chain Ring]{Lattice of Ideals of the Polynomial Ring over a Commutative Chain Ring}

\author[Xiang-dong Hou]{Xiang-dong Hou*}
\address{Department of Mathematics and Statistics,
University of South Florida, Tampa, FL 33620}
\email{xhou@usf.edu}
\thanks{* Research partially supported by NSA Grant H98230-12-1-0245.}

\keywords{chain ring, Frobenius ring, Gr\"obner basis, local ring}

\subjclass[2000]{13P10, 13P25}

\begin{abstract}
Let $R$ be a commutative chain ring. We use a variation of Gr\"obner bases to study the lattice of ideals of $R[x]$. Let $I$ be a proper ideal of $R[x]$. We are interested in the following two questions: When is $R[x]/I$ Frobenius? When is $R[x]/I$ Frobenius and local? We develop algorithms for answering both questions. When the nilpotency of $\text{rad}\,R$ is small, the algorithms provide explicit answers to the questions.
\end{abstract}

\maketitle

%%%%%%%%%%%%%%%%%%%%%%%%%%%%%%%%%%%%%%%%%%%%%%%%%%%%%%%%%%%%%%%%%%%%%%%%%
%             section 1 
%%%%%%%%%%%%%%%%%%%%%%%%%%%%%%%%%%%%%%%%%%%%%%%%%%%%%%%%%%%%%%%%%%%%%%%%%
\section{Introduction}

All rings are assumed to have identity.
A {\em commutative chain ring} is a commutative ring $R$ whose Jacobson radical $\text{rad}\,R$ is principally generated by a nilpotent element. A discrete valuation ring modulo a nontrivial ideal is an example of commutative chain ring. Let $R$ be a commutative chain ring and let $I$ be a proper ideal of $R[x]$. We are interested in the following two questions.

\begin{ques}\label{Q1.1}
When is $R[x]/I$ Frobenius? (The definition of a Frobenius ring is given in Section 2.) 
\end{ques}

\begin{ques}\label{Q1.2}
When is $R[x]/I$ Frobenius and local?  
\end{ques}
 
Our interest in these questions was inspired by applications of finite Frobenius rings in coding theory and combinatorics. A finite ring $R$ is said to have the {\em extension property} if for every two $R$-submodules ${}_RC_1,\ {}_RC_2\subset R^n$ and an $R$-isomorphism $g:C_1\to C_2$ which preserves the Hamming weight, $g$ can be extended to an automorphism of $R^n$ which preserves the Hamming weight. J. Wood \cite{Woo99,Woo08} proved that a finite ring has the extension property if and only if it is Frobenius. It is also known that the MacWilliams identity holds for linear codes over finite Frobenius rings \cite{Hon-Lan01, Woo99}. For more works on codes over finite Frobenius rings, see 
\cite{BGKS10,Din10, Din-Lop-Sza09,Dou-Kim-Kul-Liu10,Dou-Yil-Kar11, Gre-Nec-Wis04, Nec08, Nec-Kuz-Mar97,Nor-Sal03, Yil-Kar10} 
and the references therein. Finite Frobenius rings have also been used to construct combinatorial objects (bent functions, Hadamard matrices, and partial difference sets) that are defined in terms of the characters of finite abelian groups \cite{Hou00,Hou03,Hou-Nec07,McG-War09}. It is known that every commutative Frobenius ring is a direct product of finitely many commutative Frobenius local rings \cite[Theorem 15.27]{Lam98}. Examples of finite Frobenius rings in literature frequently appear in the form of $R[x]/I$, where $R$ is a finite commutative chain ring. Therefore, it is desirable to investigate this type of rings more systematically. In doing so, we will not insist on $R$ being finite since the requirement is unnecessary. 

Returning to Questions~\ref{Q1.1} and \ref{Q1.2}, we know that $R[x]/I$ is Frobenius if and only if the annihilator of every maximal ideal of $R[x]/I$ is a minimal ideal (Theorem~\ref{T2.3}) and that $R[x]/I$ is Frobenius and local if and only if it has a unique minimal ideal (Theorem~\ref{T2.4}). To make use of these characterizations, we need to answer certain related questions about the lattice of ideals of $R[x]$. What are the maximal ideals of $R[x]$ that contain $I$? How to compute the annihilators of maximal ideals in $R[x]/I$? How to determine the ideals that are immediate above $I$ in the ideal lattice of $R[x]$? We will develop effective algorithms for answering these questions. The algorithms are essentially based on a variation of Gr\"obner bases of ideals in $R[x]$. The complexity of the ideal lattice of $R[x]$ increases rapidly as the nilpotency of $\text{rad}\,R$ increases. In general, the answers to Questions~\ref{Q1.1} and \ref{Q1.2} provided in this paper are algorithmic but not explicit. However, when the nilpotency of $\text{rad}\,R$ is small, the algorithms do provide explicit answers.

In Section 2, we gather some basic facts about Frobenius rings and chain rings. Starting from Section 3, we focus on the polynomial ring $R[x]$ over a commutative chain ring $R$.  For each ideal of $R[x]$, we introduce the notion of {\em canonical sequence}, which contains the same information as the Gr\"obner basis of the ideal, and the notion of {\em invariant sequence}, which is a sequence of polynomials over the residue field $F=R/\text{rad}\,R$ that resemble the invariant factors of a finitely generated module over a PID. Roughly speaking, invariant sequences provide a rather coarse sketch of the ideal lattice of $R[x]$ and canonical sequences refine the picture. We also describe the Gr\"obner basis of any given ideal of $R[x]$ using the canonical sequence of the ideal. 
Since the polynomial ring is univariate, canonical sequences, invariant sequences, and Gr\"obner bases are all easy to compute.
In Section 4, we describe the maximal and minimal ideals of $R[x]/I$. Section 5 contains an example where all ideals of $R[x]$ with invariant sequence dividing a given one are enumerated and the relations among those ideals are determined. In Section 6, we outline the master algorithms for answering Questions~\ref{Q1.1} and \ref{Q1.2}, and we provide supporting algorithms for the specific tasks in the master algorithms.  
In Section 7, we consider commutative chain rings $R$ for which $\text{rad}\,R$ has small nilpotency. In such cases, the algorithms in Section 6 produce explicit answers to Questions~\ref{Q1.1} and \ref{Q1.2}.

All algorithms in the paper except Algorithms~\ref{A3.3} are not in the pseudo code format; they are normal text passages containing descriptions and justifications of computational steps, and they are labeled for easy reference.

%%%%%%%%%%%%%%%%%%%%%%%%%%%%%%%%%%%%%%%%%%%%%%%%%%%%%%%%%%%%%%%%%%%%%%%%%
%             section 2 
%%%%%%%%%%%%%%%%%%%%%%%%%%%%%%%%%%%%%%%%%%%%%%%%%%%%%%%%%%%%%%%%%%%%%%%%%
\section{Frobenius Rings and Chain Rings} 

Let $R$ be a ring and ${}_RM$ a left $R$-module. The {\em socle} of ${}_RM$, denoted by $\text{soc}({}_RM)$, is the sum of all simple submodules of $M$. The socle of a right $R$-module is defined in the same way. A {\em Frobenius ring} is a left artinian ring $R$ satisfying $\text{soc}({}_RM)\cong{}_R\bar R$ and $\text{soc}(M_R)\cong\bar R_R$, where $\bar R=R/\text{rad}\,R$.

\begin{thm}\label{T2.1} \cite{Woo99} A finite ring $R$ is Frobenius if and only if the abelian group $(R,+)$ has a character $\chi$ such that $\ker\chi$ does not contain any nonzero left ideal of $R$. (The character $\chi$ is called a {\em left generating character} of $R$.) The statement is left-right symmetric.
\end{thm}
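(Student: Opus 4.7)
The plan is to pass to the Pontryagin dual $\widehat R=\text{Hom}((R,+),\mathbb{C}^\times)$, equipped with its canonical $(R,R)$-bimodule structure $(a\cdot\chi\cdot b)(x)=\chi(bxa)$. Since $R$ is finite, $|\widehat R|=|R|$. The central observation is that the existence of a left generating character for $R$ is equivalent to the existence of a left $R$-module isomorphism ${}_R R\cong{}_R\widehat R$: given such a $\chi$, the left $R$-linear map $r\mapsto r\cdot\chi$ has kernel $\{r:Rr\subseteq\ker\chi\}$, which is a left ideal contained in $\ker\chi$ and hence zero, so by cardinality the map is an isomorphism; conversely, the image of $1$ under any such isomorphism ${}_R R\cong{}_R\widehat R$ is a left generating character.

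I would then link ${}_R R\cong{}_R\widehat R$ to the Frobenius socle condition. Because $\text{rad}\,R$ acts on $\widehat R$ from the left by precomposition with right multiplication, the left socle of $\widehat R$ consists of the characters vanishing on $\text{rad}\,R$, and this submodule is naturally $R$-isomorphic to ${}_R\bar R$. An isomorphism ${}_R R\cong{}_R\widehat R$ therefore transports this to $\text{soc}({}_R R)\cong{}_R\bar R$, giving one half of the Frobenius property. The right-hand socle identity $\text{soc}(R_R)\cong\bar R_R$ is obtained by applying the same argument on the right, once one has established that the two one-sided existence statements are equivalent.

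The main obstacle is precisely this left--right passage. A one-sided isomorphism ${}_R R\cong{}_R\widehat R$ supplies only one of the two socle identities defining a Frobenius ring, while the theorem asserts both. The cleanest route is to exploit the bimodule structure of $\widehat R$: using the Wedderburn decomposition of $\bar R$, count the multiplicities of each simple component appearing in $\text{soc}({}_R R)$ and in $\text{soc}(R_R)$, and observe that the existence of the left isomorphism already pins down both sets of multiplicities to those of $\bar R$. This simultaneous matching is what drives the symmetry claim; a right generating character can then be extracted from the now-established right-sided isomorphism, though in general it need not coincide with the original $\chi$.
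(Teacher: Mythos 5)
First, a point of reference: the paper does not prove this statement --- it is quoted from Wood \cite{Woo99} --- so your proposal can only be measured against Wood's argument, and your framework is essentially his. The equivalence between a left generating character and a left module isomorphism ${}_RR\cong{}_R\widehat R$ is argued correctly (the kernel of $r\mapsto r\cdot\chi$ is $\{r:Rr\subset\ker\chi\}$, which the hypothesis on $\ker\chi$ kills, and $|\widehat R|=|R|$ finishes), and the identification $\text{soc}({}_R\widehat R)=(\text{rad}\,R)^\perp\cong\widehat{\bar R}\cong{}_R\bar R$ is right, granting the standard fact that finite semisimple rings are self-dual as bimodules.

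Two genuine gaps remain. First, you only travel in one direction: nothing in the sketch produces a generating character from the hypothesis that $R$ is Frobenius. The missing ingredient is the injectivity of ${}_R\widehat R$: given $\text{soc}({}_RR)\cong{}_R\bar R\cong\text{soc}({}_R\widehat R)$, injectivity extends the socle isomorphism to a map ${}_RR\to{}_R\widehat R$ whose kernel meets $\text{soc}({}_RR)$ trivially and is therefore zero (every nonzero left ideal of a finite ring contains a minimal one), and cardinality again makes it onto. Second, and more seriously, the left--right passage --- the actual content of Wood's theorem --- is asserted rather than proved: ``observe that the existence of the left isomorphism already pins down both sets of multiplicities'' is exactly the statement that needs an argument, since a priori ${}_RR\cong{}_R\widehat R$ controls only the left socle. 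The standard way to close it is to use that character duality interchanges socle and top: $\text{soc}({}_RR)$ and $\widehat R\,\text{rad}\,R$ are mutual annihilators, so ${}_RR\cong{}_R\widehat R$ gives $\widehat R_R/\widehat R_R\,\text{rad}\,R\cong\widehat{\text{soc}({}_RR)}\cong\bar R_R$; by Nakayama $\widehat R_R$ is then cyclic, hence a quotient of $R_R$, hence isomorphic to $R_R$ by counting. This yields the right generating character and with it $\text{soc}(R_R)\cong\bar R_R$. Without this step (or Wood's original double-annihilator multiplicity count), the second socle identity and the left--right symmetry claim are unsupported.
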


\begin{thm}\label{T2.2} \cite{Hou-Nec07} A finite ring $R$ is Frobenius and local if and only if it has a unique minimal left ideal. The statement is left-right symmetric. (If $R$ is a finite Frobenius local ring, its minimal left ideal coincides with the minimal right ideal.)
\end{thm}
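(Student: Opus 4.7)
The plan is to combine the character-theoretic criterion of Theorem~\ref{T2.1} with the socle-based definition of a Frobenius ring. Throughout, let $\bar R=R/\text{rad}\,R$.

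For the ``only if'' direction, assume $R$ is Frobenius and local. Then $\bar R$ is a (finite) division ring, so ${}_R\bar R$ is a simple left $R$-module. The Frobenius condition gives $\text{soc}({}_RR)\cong{}_R\bar R$, hence $\text{soc}({}_RR)$ is simple. Since $\text{soc}({}_RR)$ is the sum of all minimal left ideals, its simplicity is equivalent to the existence of a unique minimal left ideal.

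For the ``if'' direction, assume $R$ has a unique minimal left ideal $L$. Because $R$ is finite and hence left artinian, every nonzero left ideal contains a minimal one, which must be $L$; thus $L$ is contained in every nonzero left ideal. To obtain the Frobenius conclusion, I would choose an additive character $\chi$ of $(R,+)$ with $\chi|_L\ne 0$; then $\ker\chi$ contains no nonzero left ideal, so Theorem~\ref{T2.1} shows that $R$ is Frobenius. For locality, it suffices to rule out nontrivial idempotents: if $e\in R$ with $e\ne 0,1$, then $Re$ and $R(1-e)$ are nonzero left ideals and hence both contain $L$, forcing $L\subseteq Re\cap R(1-e)=0$, a contradiction. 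A finite ring with only trivial idempotents has $\bar R$ a division ring (by lifting idempotents from its semisimple quotient), so $R$ is local.

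The left-right symmetry is immediate from the left-right symmetry of Theorem~\ref{T2.1} applied to the analogous argument on the right. For the final parenthetical, I would observe that in a Frobenius local ring the Jacobson radical $\mathfrak{m}=\text{rad}\,R$ is two-sided, so its one-sided annihilators are two-sided; combining the identifications $\text{soc}({}_RR)=\{r\in R:\mathfrak{m}r=0\}$ and $\text{soc}(R_R)=\{r\in R:r\mathfrak{m}=0\}$ with the double-annihilator duality for finite Frobenius rings then yields $\text{soc}({}_RR)=\text{soc}(R_R)$. The main technical obstacle will be justifying this last coincidence cleanly, since it rests on the full strength of the annihilator duality packaged into Theorem~\ref{T2.1} rather than on the elementary character argument that drove the first two directions.
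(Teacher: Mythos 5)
Your argument is correct; note that the paper itself offers no proof of this theorem, citing \cite{Hou-Nec07} instead, so there is nothing internal to compare against. Your route --- deducing uniqueness of the minimal left ideal from simplicity of $\text{soc}({}_RR)$ in one direction, and in the other direction building a generating character that is nontrivial on the unique minimal left ideal $L$ (which, by finiteness, sits inside every nonzero left ideal) and then forcing locality by showing a nontrivial idempotent $e$ would give $L\subseteq Re\cap R(1-e)=0$ --- is sound and essentially the standard one for this result. The only place where you overcomplicate matters is the final parenthetical, which you flag as the main obstacle: you do not need the full double-annihilator duality. Since $\text{soc}({}_RR)$ is a fully invariant left submodule of $R$ (right multiplication by any $s\in R$ is a left-module endomorphism), it is a two-sided ideal; being a nonzero right ideal in a ring with a unique minimal right ideal, it contains $\text{soc}(R_R)$, and the symmetric argument gives the reverse inclusion, so the two socles --- which are the unique minimal left and right ideals, respectively --- coincide. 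One small bookkeeping point worth making explicit in the ``if'' direction: a minimal left ideal of a nonzero left ideal $I$ is automatically a minimal left ideal of $R$, which is what lets you conclude $L\subseteq I$; and the character $\chi$ with $\chi|_L\ne 0$ exists because the characters of the finite group $(R,+)$ separate points.
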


\begin{thm}\label{T2.3} A commutative ring $R$ is Frobenius if and only if $R$ is artinian and for every maximal ideal $M$ of $R$, its annihilator $\text{\rm ann}(M)$ is a minimal ideal of $R$.
\end{thm}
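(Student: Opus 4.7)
The plan is to reduce the theorem to a componentwise calculation via the structure theorem for commutative artinian rings, and then compare $\text{soc}(R)$ with $R/\text{rad}\,R$ as $R$-modules. Either direction of the theorem grants that $R$ is artinian (for the Frobenius direction this is by definition; for the converse it is assumed). I may therefore write $R=R_1\times\cdots\times R_n$ with each $R_i$ local artinian, having maximal ideal $\mathfrak{m}_i$ and residue field $F_i=R_i/\mathfrak{m}_i$. The maximal ideals of $R$ are then $M_i=R_1\times\cdots\times\mathfrak{m}_i\times\cdots\times R_n$, and $\text{ann}_R(M_i)$ lives entirely in the $i$-th coordinate as $\text{ann}_{R_i}(\mathfrak{m}_i)$, so it is a minimal ideal of $R$ if and only if $\text{ann}_{R_i}(\mathfrak{m}_i)$ is a minimal ideal of $R_i$.

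Next I would compute both sides of the Frobenius condition as $R$-modules. On the top, $R/\text{rad}\,R\cong\bigoplus_{i=1}^n F_i$ is a direct sum of the $n$ pairwise non-isomorphic simple $R$-modules, each with multiplicity one. For the socle, since $R_i$ is local with unique simple module $F_i$, one has $\text{soc}(R_i)=\text{ann}_{R_i}(\mathfrak{m}_i)$, an $F_i$-vector space of some dimension $d_i$; hence $\text{soc}(R)\cong\bigoplus_{i=1}^n F_i^{d_i}$. By uniqueness of the isotypic decomposition of a semisimple module, $\text{soc}(R)\cong R/\text{rad}\,R$ holds if and only if $d_i=1$ for every $i$. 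Because every $F_i$-subspace of $\text{ann}_{R_i}(\mathfrak{m}_i)$ is automatically an ideal of $R_i$ (as $\mathfrak{m}_i$ acts by zero), the condition $d_i=1$ is equivalent to $\text{ann}_{R_i}(\mathfrak{m}_i)$ being a minimal ideal of $R_i$. Combining these equivalences yields both directions of the theorem at once, and the left-right symmetry in the definition of Frobenius is automatic because $R$ is commutative.

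The main subtlety to keep in mind is that the comparison of $\text{soc}(R)$ with $R/\text{rad}\,R$ must be an isomorphism of $R$-modules, not merely of abelian groups or of $F$-vector spaces; an equality of total dimensions does not suffice. The coordinatewise matching of simple summands arranged above handles this point, after which the remaining verifications are routine bookkeeping.
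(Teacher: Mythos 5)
Your proof is correct, but it takes a genuinely different route from the paper's. You invoke the structure theorem $R\cong R_1\times\cdots\times R_n$ for commutative artinian rings to reduce everything to the local factors, identify $\mathrm{soc}(R_i)=\mathrm{ann}_{R_i}(\mathfrak m_i)$ as an $F_i$-vector space of dimension $d_i$, and then read off both directions simultaneously from the multiplicity comparison $\bigoplus_i F_i^{d_i}\cong\bigoplus_i F_i$ in the isotypic decomposition. The paper never decomposes $R$ into local pieces: for ($\Rightarrow$) it quotes the annihilator duality of Frobenius rings (Lam, Theorem 15.1), and for ($\Leftarrow$) it writes $\mathrm{soc}(R)=I_1\oplus\cdots\oplus I_k$ with $I_i=a_iR$ minimal, proves $\mathrm{rad}\,R=\bigcap_i\mathrm{ann}(I_i)$ via a Nakayama argument showing $\mathrm{ann}(M)\ne 0$ for every maximal $M$, and then exhibits an explicit surjection $x\mapsto(a_1x,\dots,a_kx)$ from $R$ onto $\mathrm{soc}(R)$ with kernel $\mathrm{rad}\,R$. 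Your approach buys a cleaner, symmetric treatment of both implications and makes the forward direction self-contained (no appeal to the duality theory of Frobenius rings), at the cost of assuming the decomposition theorem; the paper's approach is more elementary in its prerequisites and, as a side benefit, isolates the lemma ``$\mathrm{ann}(M)\ne 0$ for artinian $R$'' (step $2^\circ$), which is reused verbatim in the proof of Theorem~\ref{T2.4}. Your closing caveat — that the isomorphism $\mathrm{soc}(R)\cong R/\mathrm{rad}\,R$ must be one of $R$-modules, tracked summand by summand — is exactly the right point to flag, and your coordinatewise matching handles it.
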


\begin{proof}($\Rightarrow$) This obvious since $\text{ann}(\ )$ is an inclusion-reversing bijection from the set of all ideals of $R$ to itself \cite[Theorem~15.1]{Lam98}.

($\Leftarrow$) $1^\circ$ We claim that for every minimal ideal $I$ of $R$, $\text{ann}(I)$ is a maximal ideal of $R$ and $\text{ann}(\text{ann}(I))=I$.

Write $I=aR$, where $a\in R$. Then $R/\text{ann}(I)=R/\text{ann}(a)\cong aR$, which is simple. Thus $\text{ann}(I)$ is a maximal ideal of $R$. Since $I\subset \text{ann}(\text{ann}(I))$ and $\text{ann}(\text{ann}(I))$ is minimal, we have $I=\text{ann}(\text{ann}(I))$.

\medskip
$2^\circ$ We claim that for any maximal ideal $M$ of $R$, $\text{ann}(\text{ann}(M))=M$. 

For this claim, we only need the assumption that $R$ is artinian. (The assumption that $\text{ann}(M)$ is minimal is unnecessary.) We first show that $\text{ann}(M)\ne 0$. Since $R$ is artinian, the descending chain $M^1\supset M^2\supset\cdots$ must stabilize, say, $M^{n+1}=M^n$. Since $M^n$ is finitely generated ($R$ is noetherian) and $MM^n=M^n$, by Nakayama's lemma \cite[(1.M)]{Mat80}, there exists $x\in M$ such that $(1+x)M^n=0$. We may assume that $n$ is the smallest positive integer with this property. Then $0\ne(1+x)M^{n-1}\subset\text{ann}(M)$. Hence $\text{ann}(M)\ne 0$. Now we have $M\subset \text{ann}(\text{ann}(M))\subsetneq R$. It follows that $M= \text{ann}(\text{ann}(M))$.

\medskip
$3^\circ$ Since $R$ is artinian, we have $\text{soc}(R)=I_1\oplus \cdots\oplus I_k$, where each $I_i$ is a minimal ideal of $R$. For each maximal ideal $M$ of $R$, we have $\text{ann}(M)\subset I_1\oplus\cdots\oplus I_k$. Hence
\[
M=\text{ann}(\text{ann}(M))\supset\text{ann}(I_1\oplus \cdots\oplus I_k)=\text{ann}(I_1)\cap\cdots\cap\text{ann}(I_k).
\]
Thus $\text{rad}\, R=\text{ann}(I_1)\cap\cdots\cap\text{ann}(I_k)$. Write $I_i=a_iR$, $a_i\in R$. Define
\[
\begin{array}{cccc}
\phi:&R&\longrightarrow&a_1R\oplus\cdots\oplus a_kR\cr
&x&\longmapsto&a_1x+\cdots+a_kx
\end{array}
\]
Clearly, $\ker\phi=\text{ann}(I_1)\cap\cdots\cap\text{ann}(I_k)=\text{rad}\, R$. By $1^\circ$, $\text{ann}(I_1)\ne\text{ann}(I_i)$ for all $2\le i\le k$. Choose $b_i\in\text{ann}(I_i)\setminus\text{ann}(I_1)$, $2\le i\le k$. Then $b_2\cdots b_k\in\bigl(\text{ann}(I_2)\cap\cdots\cap\text{ann}(I_k)\bigr)\setminus\text{ann}(I_1)$. Hence $\phi(b_2\cdots b_k)=a_1b_2\cdots b_k\in I_1\setminus\{0\}$. It follows that $\phi$ is onto. Therefore
\[
R/\text{rad}\,R\cong a_1R\oplus\cdots\oplus a_kR=\text{soc}(R).
\]
So $R$ is Frobenius.
\end{proof}

\begin{thm}\label{T2.4}
A commutative ring $R$ is Frobenius and local if and $R$ is artinian and has a unique minimal ideal.
\end{thm}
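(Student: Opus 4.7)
The plan is to leverage Theorem~\ref{T2.3} together with steps $1^\circ$ and $2^\circ$ of its proof, which establish that annihilation acts as an order-reversing bijection between the minimal and maximal ideals of a commutative artinian ring.

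($\Rightarrow$) Assume $R$ is Frobenius and local with unique maximal ideal $M$. Then $R$ is artinian by definition, and Theorem~\ref{T2.3} gives that $\text{ann}(M)$ is a minimal ideal. For uniqueness, let $I$ be any minimal ideal of $R$. By step $1^\circ$ of the proof of Theorem~\ref{T2.3}, $\text{ann}(I)$ is a maximal ideal, so $\text{ann}(I)=M$ by locality, and hence $I=\text{ann}(\text{ann}(I))=\text{ann}(M)$. So $R$ has exactly one minimal ideal.

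($\Leftarrow$) Suppose $R$ is artinian with a unique minimal ideal $I_0$. I would first show $R$ is local. Set $M_0:=\text{ann}(I_0)$; by step $1^\circ$, $M_0$ is a maximal ideal. For any maximal ideal $M$ of $R$, step $2^\circ$ gives $\text{ann}(M)\ne 0$, so by the descending chain condition $\text{ann}(M)$ contains a minimal ideal, which must be $I_0$. Then $M\cdot I_0=0$ forces $M\subseteq\text{ann}(I_0)=M_0$, and maximality gives $M=M_0$. Hence $R$ is local with maximal ideal $M_0$. It remains to show $\text{ann}(M_0)$ is a minimal ideal, whereupon Theorem~\ref{T2.3} yields that $R$ is Frobenius. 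The inclusion $I_0\subseteq\text{ann}(M_0)$ is clear. For the reverse, let $x\in\text{ann}(M_0)$; then $M_0\cdot xR=0$, so $xR$ is naturally a vector space over the residue field $R/M_0$. Every one-dimensional $R/M_0$-subspace of $xR$ is a minimal ideal of $R$ and therefore equals $I_0$ by uniqueness. Since any vector space of dimension $\ge 2$ has more than one line, this forces $\dim_{R/M_0}xR\le 1$, whence $xR\subseteq I_0$. Thus $\text{ann}(M_0)\subseteq I_0$, so $\text{ann}(M_0)=I_0$ is minimal.

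The main obstacle is this last step: the set-theoretic hypothesis of a unique minimal ideal must be promoted to the structural conclusion that $\text{ann}(M_0)$ is itself minimal. The key device is to exploit the fact that $\text{ann}(M_0)$ is naturally a module over the \emph{field} $R/M_0$, so that uniqueness of minimal ideals translates cleanly into a one-dimensionality statement through elementary linear algebra.
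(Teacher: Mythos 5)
Your proof is correct, and it diverges from the paper's in the final step. The locality half of ($\Leftarrow$) coincides with the paper's argument: $\text{ann}(M)\ne 0$ forces $\text{ann}(M)$ to contain the unique minimal ideal $I_0$, hence $M\subseteq\text{ann}(I_0)$, so $\text{ann}(I_0)$ is the only maximal ideal. Where you differ is in concluding that $R$ is Frobenius. The paper verifies the socle definition directly: writing $I_0=aR$, the map $r\mapsto ar$ gives $R/\text{rad}\,R=R/\text{ann}(a)\cong aR$, and $aR=\text{soc}(R)$ tautologically, since the socle is the sum of the minimal ideals and there is only one; nothing more is needed. You instead route the conclusion through Theorem~\ref{T2.3}, which obliges you to prove the additional fact $\text{ann}(M_0)=I_0$, and you do so with a clean linear-algebra device: $\text{ann}(M_0)$ is a vector space over the residue field $R/M_0$, every line in it is a minimal ideal and hence equals $I_0$, so the dimension is at most one. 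This is a legitimate and slightly stronger route --- it explicitly identifies the socle with the annihilator of the radical and keeps the whole verification inside the ideal lattice --- at the cost of an extra lemma that the paper's one-line socle computation sidesteps. Your ($\Rightarrow$) direction is essentially the paper's, using that annihilation is an inclusion-reversing bijection on the ideals of a commutative Frobenius ring.
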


\begin{proof}
($\Rightarrow$) This is obvious since $R$  has a unique maximal ideal and $\text{ann}(\ )$ is an inclusion-reversing bijection from the set of all ideals of $R$ to itself.

($\Leftarrow$) Let $aR$ ($a\in R$) be the unique minimal ideal of $R$. Let $M$ be any maximal ideal of $R$. By the proof of Theorem~\ref{T2.3}, ($\Leftarrow$), step $2^\circ$, we have $\text{ann}(M)\ne 0$. Then we must have $a\in\text{ann}(M)$, i.e., $M\subset \text{ann}(a)$. Therefore $\text{ann}(a)$ is the unique maximal ideal of $R$, and hence $R$ is local. Since $R/\text{rad}\,R=R/\text{ann}(a)\cong aR=\text{soc}(R)$, $R$ is Frobenius.
\end{proof} 

\begin{rmk}\label{R2.5}\rm
If a commutative ring $R$ has a unique minimal ideal, it may not be artinian, hence may not be Frobenius. For example, let $R=F[x]\times F$, where $F$ is any field. Then $0\times F$ is the unique minimal ideal of $R$, but $R$ is not artinian since $F[x]$ is not artinian.
\end{rmk}

We define a {\em chain ring} to be a ring whose left ideals form a {\em finite} chain. (We note that this definition differs from others in literature in that we require the ring to have only finitely many left ideals; for comparison, see for example \cite{Bru-Tor76}.) The definition is left-right symmetric. For a chain ring $R$, we have the following facts:
\begin{itemize}
  \item $R$ is local and there is a nilpotent element $\pi\in R$ such that $\text{rad}\,R=\pi R=R\pi$.
  \item All one-sided ideals of $R$ are two-sided.
  \item All ideals of $R$ form a chain $R\supset \pi R\supset\pi^2 R\supset\cdots\supset\pi^n R=\{0\}$, where $n$ is the nilpotency of $\pi$.
\end{itemize}  
It is known that \cite{Cla-Dra73} \cite[\S5.2]{Nec08} that every finite chain ring $R$ is of the form
\[
R=S[x,\sigma]/\langle g,p^{n-1}x^t\rangle,
\]
where  
\begin{itemize}
  \item $S$ is a {\em Galois ring} of characteristic $p^n$;
  \item $\sigma\in\text{Aut}(S)$;
  \item $S[x;\sigma]$ is the skew polynomial ring in $x$ over $S$ satisfying $xa=\sigma(a)x$ for all $a\in S$;
  \item $g\in S[x;\sigma]$ is an {\em Eisenstein polynomial} of degree $k$, i.e., $g=x^k-p(a_{k-1}x^{k-1}+\cdots+a_0)$, $a_i\in S$, $a_0\in S^\times$, where $S^\times$ is the group of units of $S$;
  \item $t=k$ when $n=1$, and $1\le t\le k$ when $n>1$.
\end{itemize}   
For the definition of Galois rings and the description of their automorphism groups, see \cite[\S4.3]{Nec08} \cite[Chapter 14]{Wan03}. In the above notation, every finite commutative chain ring is of the form 
\[
R=S[x]/\langle g,p^{n-1}x^t\rangle.
\]

%%%%%%%%%%%%%%%%%%%%%%%%%%%%%%%%%%%%%%%%%%%%%%%%%%%%%%%%%%%%%%%%%%%%%%%%%
%             section 3 
%%%%%%%%%%%%%%%%%%%%%%%%%%%%%%%%%%%%%%%%%%%%%%%%%%%%%%%%%%%%%%%%%%%%%%%%%
\section{Gr\"obner Bases, Canonical Sequences, and Invariant Sequences} 

From now on, $R$ always denote a commutative chain ring. Write $\text{rad}\,R=\pi R$, where $\pi\in R$ is of nilpotency $n$. Let $F=R/\text{rad}\,R$ be the residue field of $R$. The symbol $\langle\ \rangle$ is reserved for ideals generated in $R[x]$ unless a different ambient ring is clearly stated in the context. The natural homomorphism from $R$ to $F$ and its induced homomorphism from $R[x[$ to $F[x]$ are both denoted by $\overline{(\ )}$.

\begin{prop}\label{P3.1} Let $I$ be an ideal of $R[x]$ and $f_0,\dots,f_{n-1}\in R[x]$. The following statements are equivalent.
\begin{itemize}
  \item [(i)] $I\cap\langle\pi^i\rangle=\langle\pi^if_i,\dots,\pi^{n-1}f_{n-1}\rangle$ for all $0\le i\le n-1$.
  \item [(ii)] For each $0\le i\le n-1$, $\pi^if_i\in I$ and 
\begin{equation}\label{3.1}
\deg\overline{f_i}=\min\{\deg\overline f:f\in R[x],\ \pi^if\in I\}.
\end{equation}
(We define $\deg 0=\infty$.)
\item[(iii)] $I=\langle\pi^0f_0,\dots,\pi^{n-1}f_{n-1}\rangle$ and for each $0\le i\le n-2$,
\begin{equation}\label{3.2}
\langle\pi^{i+1}f_{i+1},\dots,\pi^{n-1}f_{n-1}\rangle\ni
\begin{cases}
\pi^i f_i&\text{if}\ \overline{f_i}=0,\cr
\pi^{i+1}f_i&\text{if}\ \overline{f_i}\ne 0.
\end{cases}
\end{equation}
\end{itemize}
\end{prop}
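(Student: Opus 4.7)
The plan is to establish (i) $\Leftrightarrow$ (ii) and (i) $\Leftrightarrow$ (iii), using two tools throughout: ordinary division with remainder in $F[x]$ lifted to $R[x]$, and the following \emph{cancellation observation}. Since the annihilator of $\pi^i$ in $R$ equals $\pi^{n-i}R$ for $0\le i\le n-1$, any $g\in R[x]$ with $\pi^i g\in\langle\pi^{i+1}\rangle$ must satisfy $\overline{g}=0$ in $F[x]$ (check coefficient-wise and use $n-i\ge 1$).

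For (i) $\Rightarrow$ (ii), $\pi^i f_i\in I$ is immediate, and (3.1) follows because any $\pi^i f\in I$ expands via (i) as $\pi^i f=\sum_{j\ge i}\pi^j f_j g_j$, whence $\pi^i(f-f_ig_i)\in\langle\pi^{i+1}\rangle$, so the cancellation observation forces $\overline{f}=\overline{f_i}\,\overline{g_i}$ and the degree bound drops out. For the converse (ii) $\Rightarrow$ (i), I would use downward induction on $i$, starting at $i=n-1$. Given $\pi^i h\in I$, divide $\overline{h}$ by $\overline{f_i}$ in $F[x]$ and lift to obtain $h=f_iq+r+\pi h'$ with $\deg\overline{r}<\deg\overline{f_i}$; then $\pi^i r\in I$, so the minimality in (ii) forces $\overline{r}=0$, hence $r=\pi r'$ and $\pi^i r=\pi^{i+1}r'\in I\cap\langle\pi^{i+1}\rangle$. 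The inductive hypothesis (or $\pi^n=0$ in the base case $i=n-1$) absorbs this tail into $\langle\pi^{i+1}f_{i+1},\dots,\pi^{n-1}f_{n-1}\rangle$, so $\pi^i h\in\langle\pi^i f_i,\dots,\pi^{n-1}f_{n-1}\rangle$.

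For (i) $\Rightarrow$ (iii), the first clause of (iii) is just (i) at $i=0$, while (3.2) splits into two direct cases: if $\overline{f_i}=0$ then $f_i=\pi h$ and $\pi^if_i=\pi^{i+1}h\in\langle\pi^{i+1}\rangle\cap I$; if $\overline{f_i}\neq 0$ then $\pi^{i+1}f_i=\pi\cdot\pi^if_i\in\langle\pi^{i+1}\rangle\cap I$. In both situations (i) at index $i+1$ identifies this intersection as $\langle\pi^{i+1}f_{i+1},\dots,\pi^{n-1}f_{n-1}\rangle$. Conversely, for (iii) $\Rightarrow$ (i), I would induct upward on $i$ starting from $i=0$ (which is the first clause of (iii)). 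Given $g\in I\cap\langle\pi^{i+1}\rangle$, the inductive hypothesis at $i$ lets me write $g=\pi^if_iq_i+\sum_{j>i}\pi^jf_jq_j$; comparing with $g\in\langle\pi^{i+1}\rangle$ forces $\pi^if_iq_i\in\langle\pi^{i+1}\rangle$, and cancellation yields $\overline{f_i}\,\overline{q_i}=0$ in the integral domain $F[x]$. Hence either $\overline{f_i}=0$, in which case (3.2) gives $\pi^if_iq_i\in\langle\pi^{i+1}f_{i+1},\dots\rangle$ directly, or $\overline{q_i}=0$, so $q_i=\pi q_i'$ and $\pi^if_iq_i=\pi^{i+1}f_iq_i'$, which is again in $\langle\pi^{i+1}f_{i+1},\dots\rangle$ by the nonzero case of (3.2).

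The main obstacle I anticipate is keeping the two cases of (3.2) and the cancellation observation synchronized with the induction indexing in the (iii) $\Rightarrow$ (i) step; once that bookkeeping is in order, every individual computation reduces to a lift of the division algorithm in $F[x]$ and is routine.
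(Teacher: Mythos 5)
Your proposal is correct and follows essentially the same route as the paper: the same division-with-remainder argument for (ii) $\Rightarrow$ (i) by downward induction, and the same forward induction combined with the $\pi$-cancellation observation for (iii) $\Rightarrow$ (i) (the paper leaves that cancellation implicit in steps like ``we must have $a_{i-1}\in\langle\pi\rangle$''). The only nit is in (ii) $\Rightarrow$ (i), where what you actually obtain is $\pi^i(r+\pi h')\in I$ rather than $\pi^i r\in I$; since $\overline{r+\pi h'}=\overline{r}$, the minimality argument and the absorption of the tail into $I\cap\langle\pi^{i+1}\rangle$ go through unchanged.
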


\begin{proof}
(i) $\Rightarrow$ (ii). For each $f\in R[x]$ with $\pi^if\in I$, we have $\pi^if\in\langle \pi^if_i,\dots,\pi^{n-1}f_{n-1}\rangle$. Hence $\overline{f_i}\mid\overline f$, which gives $\deg\overline{f_i}\le \deg\overline f$. Therefore \eqref{3.1} holds.

\medskip
(ii) $\Rightarrow$ (i). We prove (i) by (backward) induction on $i$. When $i=n$, there is nothing to prove. Assume $0\le i\le n-1$. It suffices to show that $I\cap\langle\pi^i\rangle\subset\langle\pi^if_i,\dots,\pi^{n-1}f_{n-1}\rangle$. For each $g\in I\cap\langle\pi^i\rangle$, write $g=\pi^if$, where $f\in R[x]$. By \eqref{3.1}, we have $\overline{f_i}\mid\overline f$. So $f=hf_i+\pi r$ for some $h,r\in R[x]$. It follows that $g=\pi^if=h\pi^if_i+\pi^{i+1}r$, where $\pi^{i+1}r\in I\cap\langle\pi^{i+1}\rangle$. Hence $g\in \langle\pi^if_i\rangle+I\cap\langle\pi^{i+1}\rangle= \langle\pi^if_i,\dots,\pi^{n-1}f_{n-1}\rangle$ by the induction hypothesis.

\medskip
(i) $\Rightarrow$ (iii). Obvious.

\medskip
(iii) $\Rightarrow$ (i). We prove (i) by (forward) induction on $i$. When $i=0$, (i) is given in (iii). Assume $0<i\le n-2$. It suffices to show that $I\cap\langle \pi^i\rangle\subset\langle\pi^if_i,\dots,\pi^{n-1}f_{n-1}\rangle$. For each $g\in I\cap\langle \pi^i\rangle$, we have $g\in\langle \pi^{i-1}\rangle=\langle\pi^{i-1}f_{i-1},\dots,\pi^{n-1}f_{n-1}\rangle$ by the induction hypothesis. If $\overline{f_{i-1}}=0$, then $\pi^{i-1}f_{i-1}\in \langle\pi^if_i,\dots,\pi^{n-1}f_{n-1}\rangle$, and hence $g\in \langle\pi^if_i,\dots,\pi^{n-1}f_{n-1}\rangle$. Assume $\overline{f_{i-1}}\ne 0$. We have $g=a_{i-1}\pi^{i-1}f_{i-1}+\cdots+a_{n-1}\pi^{n-1}f_{n-1}$ for some $a_{i-1},\dots,a_{n-1}\in R[x]$. Since $g\in\langle\pi^i\rangle$, we must have $a_{i-1}\in\langle \pi\rangle$, say, $a_{i-1}=a_{i-1}'\pi$ for some $a_{i-1}'\in R[x]$. Then $g=a_{i-1}'\pi^if_{i-1}+a_i\pi^if_i+\cdots+a_{n-1}\pi^{n-1}f_{n-1}\in\langle\pi^if_i,\dots,\pi^{n-1}f_{n-1}\rangle$ since $\pi^if_{i-1}\in\langle\pi^if_i,\dots,\pi^{n-1}f_{n-1}\rangle$.
\end{proof}   

\begin{defn}\label{D3.2}
A sequence of polynomials $(f_0,\dots,f_{n-1})$ in $R[x]$ satisfying the conditions in Theorem~\ref{3.1} is called a {\em canonical sequence} of the ideal $I$. The sequence $(\overline{f_0},\dots,\overline{f_{n-1}})$ is called the {\em invariant sequence} of $I$. Clearly, $\overline{f_{n-1}}\mid\overline{f_{n-2}}\mid\cdots\mid\overline{f_0}$.
\end{defn} 

The canonical sequence of an ideal $I$ is not unique. The invariant sequence of $I$ is unique if we require each polynomial in the sequence to be monic or $0$. The canonical sequence of $I$ can be easily obtained from any finite set of generators of $I$ by the following algorithm.

\begin{algo}\label{A3.3}\rm (Canonical sequence)

\noindent{\bf Input:} $G:=\{g_1,\dots,g_m\}\subset R[x]$

\noindent{\bf Output:} A canonical sequence $C$ of $\langle g_1,\dots,g_m\rangle$

\noindent
\begin{tabbing} {\bf Initialization:} \= $C:=\emptyset$;\\
\> $i:=0$;
\end{tabbing}

\noindent{\bf While} $i<n$ {\bf do}
\begin{list}{}{\leftmargin 5mm  \itemsep 2pt}
  \item [] Write $\text{gcd}\{\overline g:g\in G\}=\sum_{g\in G}\alpha_g\,\overline g$, $\alpha_g\in F[x]$;
  \item [] For each $g\in G$, lift $\alpha_g$ to $\widetilde{\alpha_g}\in R[x]$;
  \item [] Let $f_i\equiv\sum_{g\in G}\widetilde{\alpha_g}\,g\pmod{\pi^{n-i}}$;
  \item [] $C:=C\cup\{f_i\}$;
  \item [] For each $g\in G$, find $g'\in R[x] \pmod{\pi^{n-i-1}}$ such that $g\equiv \pi g'\pmod{\langle f_i,\pi^{n-i}\rangle}$, in $G$ replace $g$ by $g'$;
  \item [] $G:=G\cup\{f_i\}$;
  \item [] $i:=i+1$.
\end{list}  
\end{algo}

One can easily see that the output $\{f_0,\dots,f_{n-1}\}$ in Algorithm~\ref{A3.3} satisfies \eqref{3.1}, which implies the correctness of the algorithm.

Let $\frak F$ be a set of coset representatives of $\text{rad}\,R$ in $R$ such that $0\in\frak F$ and let $\frak F[x]\subset F[x]$ denote the set of polynomials with coefficients in $\frak F$. Let $(f_0,\dots,f_{n-1})$ be a canonical sequence of an ideal $I$. For each $f\in R[x]$, there exist $a_0,r_1\in R[x]$, $b_0\in\frak F[x]$ such that 
\[
f=a_0f_0+b_0+\pi r_1,
\]
where $b_0=0$ or $\deg b_0<\deg \overline{f_0}$. Next, there exist $a_1,r_2\in R[x]$, $b_1\in\frak F[x]$ such that
\[
\pi r_1=\pi a_1f_1+\pi b_1+\pi^2r_2,
\]
where $b_1=0$ or $\deg b_1<\deg\overline{f_1}$. Continuing this way, we have 
\begin{equation}\label{3.3}
f=a_0f_0+a_1\pi f_1+\cdots+a_{n-1}\pi^{n-1}f_{n-1}+b_0+\pi b_1+\cdots+\pi^{n-1}b_{n-1},
\end{equation}
where $a_i\in R[x]$, $b_i\in\frak F[x]$, $b_i=0$ or $\deg b_i<\deg\overline{f_i}$. It is easy to see that $b_0,\dots,b_{n-1}$ are unique. (If we require $a_i\in\frak F[x]$, they are also unique.) We call $b_0+\pi b_1+\cdots+\pi^{n-1}b_{n-1}$ the {\em reduction} of $f$ by the canonical sequence $(f_0,\dots,f_{n-1})$. It is clear that $f\in I$ if and only if the reduction of $f$ by $(f_0,\dots,f_{n-1})$ is $0$. Note that $\{\pi^0f_0,\dots,\pi^{n-1}f_{n-1}\}$ is not necessarily a Gr\"obner basis of $I$ since the reduction in \eqref{3.3} is not solely based on the term order, but rather on a combination of the $\pi$-adic order and the term order (with priority given to the $\pi$-adic order). Let $f_i'$ be the reduction of $f_i$ by $(0,\dots,f_{i+1},\dots,f_{n-1})$. Then $(f_0',\dots,f_{n-1}')$ is also a canonical sequence of $I$ and has the additional property that $\deg f_i'=\deg\overline{f_i'}$.

We recall the notion of strong Gr\"obner bases from \cite[Definition~4.5.6]{Ada-Lou94}. The leading term and the leading coefficient of a polynomial in $R[x]$ are denoted by $\text{lt}(\ )$ and $\text{lc}(\ )$, respectively. Let $G=\{g_1,\dots,g_t\}\subset R[x]$. We say that $G$ is a {\em strong Gr\"obner basis} of the ideal $I=\langle g_1,\dots,g_t\rangle$ if for every $f\in I$, there exists $i\in\{1,\dots,t\}$ such that $\text{lt}(g_i)\mid\text{lt}(f)$. Strong Gr\"obner bases are Gr\"obner bases \cite[Lemma~4.5.8]{Ada-Lou94}.

\begin{prop}\label{P3.4}
Let $(f_0,\dots,f_{n-1})$ be a canonical sequence of $I$ such that $\deg f_i=\deg\overline{f_i}$, $0\le i\le n-1$. Then $G=\{\pi^0f_0,\dots,\pi^{n-1}f_{n-1}\}$ is a strong Gr\"obner basis of $I$.
\end{prop}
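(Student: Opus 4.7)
The plan is to unwind the definition of a strong Gr\"obner basis and, for each nonzero $f\in I$, produce an index $\ell$ such that $\text{lt}(\pi^\ell f_\ell)$ divides $\text{lt}(f)$ in $R[x]$. The natural choice is to let $\ell$ be the minimum $\pi$-adic order of the coefficients of $f$; then $f\in I\cap\langle\pi^\ell\rangle$ and one can write $f=\pi^\ell g$ with $g\in R[x]$, $\overline g\ne 0$, and $\deg g=\deg f=:d$. Writing $\text{lt}(f)=\pi^k u\,x^d$ with $u\in R^\times$, the minimality of $\ell$ forces $\ell\le k$.

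The key step is to invoke the extremal property of $f_\ell$ supplied by Proposition~\ref{P3.1}(ii). Since $\pi^\ell g=f\in I$ and $\overline g\ne 0$, that proposition yields
\[
\deg\overline{f_\ell}\le\deg\overline g\le\deg g=d.
\]
The hypothesis $\deg f_\ell=\deg\overline{f_\ell}$ now forces $f_\ell\ne 0$ and $\text{lc}(f_\ell)\in R^\times$; indeed, if $\text{lc}(f_\ell)$ were in $\pi R$, then passing from $f_\ell$ to $\overline{f_\ell}$ would strictly drop the degree. Therefore
\[
\text{lt}(\pi^\ell f_\ell)=\pi^\ell\,\text{lc}(f_\ell)\,x^{\deg f_\ell},
\]
and this divides $\pi^k u\,x^d=\text{lt}(f)$ in $R[x]$, because $\text{lc}(f_\ell)$ is a unit, $\ell\le k$, and $\deg f_\ell\le d$. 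This verifies the strong Gr\"obner basis condition for $G$, and the cited Lemma~4.5.8 of \cite{Ada-Lou94} then upgrades it to the Gr\"obner basis statement for free.

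I do not expect a serious technical obstacle here; the whole argument is a clean application of Proposition~\ref{P3.1}(ii). The one conceptual point worth flagging is that the normalization hypothesis $\deg f_i=\deg\overline{f_i}$ is used in an essential way: it is what guarantees that each $\text{lc}(f_i)$ is a unit of $R$, and hence that leading-term divisibility in $R[x]$ reduces to comparing $\pi$-adic orders and $x$-degrees. Without that hypothesis one would still get $\deg\overline{f_\ell}\le d$, but the nominal leading term of $f_\ell$ in $R[x]$ could sit in a higher degree with coefficient in $\pi R$, and divisibility of leading terms would fail.
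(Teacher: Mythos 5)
Your proof is correct, and it takes a genuinely different route from the paper's. The paper starts from the reduction identity \eqref{3.3}: since $f\in I$ the remainder vanishes, so $f=\sum_i a_i\pi^if_i$ with $a_i\in\frak F[x]$, and because each $\text{lc}(f_i)$ is a unit the nonzero summands' leading coefficients have pairwise distinct $\pi$-adic orders, so the leading terms cannot cancel and $\text{lt}(f)$ is divisible by $\text{lt}(\pi^if_i)$ for some $i$. You instead bypass the reduction entirely: you extract the minimal $\pi$-adic order $\ell$ of the coefficients of $f$, write $f=\pi^\ell g$ with $\overline g\ne 0$ and $\deg g=\deg f$, and invoke the degree-minimality in Proposition~\ref{P3.1}(ii) to get $\deg f_\ell=\deg\overline{f_\ell}\le\deg f$, after which divisibility of leading terms is a comparison of $\pi$-adic orders and $x$-degrees. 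Your version is more economical and avoids the one slightly delicate point in the paper's argument (when several summands share the top degree, $\text{lt}(f)$ is a \emph{sum} of their leading terms rather than equal to a single one, though divisibility by the summand of minimal $\pi$-adic order still holds); the paper's version has the side benefit of exhibiting a standard representation of $f$ in terms of $G$. Both arguments use the normalization $\deg f_i=\deg\overline{f_i}$ in exactly the same essential way, namely to force $\text{lc}(f_i)\in R^\times$, as you correctly flag. The only step you elide is that $g$ must be \emph{chosen} with $\deg g=\deg f$ (take the coefficientwise quotient by $\pi^\ell$, setting to $0$ the quotients of zero coefficients); this is routine and does not affect correctness.
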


\begin{proof}
Let $f\in I$. By \eqref{3.3}, we may write
\begin{equation}\label{3.4}
f=a_0\pi^0f_0+\cdots+a_{n-1}\pi^{n-1}f_{n-1},
\end{equation}
where $a_i\in\frak F[x]$. Since $\deg\,f_i=\deg\,\overline{f_i}$, the $\pi$-adic order of $\text{lc}(a_i\pi^if_i)$ is $i$ when $a_i\ne 0$. Therefore at the right side of \eqref{3.4}, the leading terms of the summands never cancel. Hence $\text{lt}(f)=\text{lt}(a_i\pi^if_i)=a_i\text{lt}(\pi^if_i)$ for some $i\in\{0,\dots,n-1\}$. Therefore $G$ is a strong Gr\"obner basis of $I$.
\end{proof}    

The result of Proposition~\ref{P3.4} is not new. See \cite[Theorem~3.2]{Nor-Sal03} for a characterization of minimal strong Gr\"obner bases in $R[x]$.

%%%%%%%%%%%%%%%%%%%%%%%%%%%%%%%%%%%%%%%%%%%
%   section 4
%%%%%%%%%%%%%%%%%%%%%%%%%%%%%%%%%%%%%%%%%%%

\section{Maximal and Minimal Ideals of $R[x]/I$}

\begin{prop}\label{P4.1}
Let $I$ be a proper ideal of $R[x]$ with invariant sequence $(\alpha_0,\dots,\alpha_{n-1})$. Then the maximal ideals of $R[x]/I$ are precisely $\langle r,\pi\rangle/I$, where $r\in R[x]$ is such that $\overline r$ is an irreducible factor of $\alpha_0$.
\end{prop}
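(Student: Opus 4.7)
The plan is to first pin down which ideals of $R[x]$ are maximal, then use the canonical sequence to detect those that contain $I$, and finally invoke the correspondence theorem.

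First I would show that every maximal ideal $M$ of $R[x]$ has the form $\langle r,\pi\rangle$ for some $r\in R[x]$ with $\overline r$ irreducible in $F[x]$. Since $\pi$ is nilpotent, it lies in the nilradical of $R[x]$ and therefore in every prime ideal, in particular in $M$. Consequently $M$ corresponds, via the natural surjection $R[x]\twoheadrightarrow R[x]/\langle\pi\rangle\cong F[x]$, to a maximal ideal of $F[x]$, which is generated by a single monic irreducible polynomial $\overline r$. Lifting a representative $r\in R[x]$ gives $M=\langle r,\pi\rangle$, and conversely any such $\langle r,\pi\rangle$ with $\overline r$ irreducible yields the quotient $R[x]/\langle r,\pi\rangle\cong F[x]/\langle\overline r\rangle$, which is a field.

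Next I would use the canonical sequence $(f_0,\dots,f_{n-1})$ of $I$, which by Proposition~\ref{P3.1}(iii) satisfies $I=\langle f_0,\pi f_1,\dots,\pi^{n-1}f_{n-1}\rangle$, to check which maximal ideals $\langle r,\pi\rangle$ contain $I$. For every $i\ge 1$ the generator $\pi^if_i$ automatically lies in $\langle\pi\rangle\subset\langle r,\pi\rangle$, so the containment $I\subset\langle r,\pi\rangle$ is equivalent to $f_0\in\langle r,\pi\rangle$. Reducing modulo $\pi$, this is equivalent to $\overline{f_0}=\alpha_0\in\langle\overline r\rangle$ in $F[x]$, that is, $\overline r\mid\alpha_0$. (When $\alpha_0=0$, i.e.\ when $I\subset\langle\pi\rangle$, every irreducible $\overline r$ divides $\alpha_0$ by convention, which is consistent with the fact that in this case every maximal ideal of $R[x]$ contains $I$.)

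Finally, by the correspondence theorem the maximal ideals of $R[x]/I$ are in bijection with the maximal ideals of $R[x]$ containing $I$, so combining the two steps above yields exactly the description claimed. I do not anticipate a main obstacle: the argument is essentially the observation that $\pi$ is a universal nilpotent together with one divisibility check on $\alpha_0$. The only care needed is in handling the degenerate case $\alpha_0=0$ consistently.
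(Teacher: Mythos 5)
Your proposal is correct and follows the same route as the paper's (much terser) proof: identify the maximal ideals of $R[x]$ as $\langle r,\pi\rangle$ with $\overline r$ irreducible via the nilpotency of $\pi$, and reduce the containment $I\subset\langle r,\pi\rangle$ to the divisibility $\overline r\mid\alpha_0$. Your write-up simply supplies the details the paper leaves implicit, including the degenerate case $\alpha_0=0$.
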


\begin{proof}
The maximal ideals of $R[x]$ are $\langle r,\pi\rangle$, where $\overline r\in F[x]$ is irreducible. The maximal ideals containing $I$ are those with $\overline r\mid\alpha_0$.
\end{proof}

To describe the minimal ideals of $R[x]/I$, we first introduce a partial order among invariant sequences. Let
\[
\mathcal A=\{(\alpha_0,\dots,\alpha_{n-1})\in F[x]^n:\alpha_{n-1}\mid\alpha_{n-2}\mid\cdots\mid\alpha_0\}.
\]
If $(\alpha_0,\dots,\alpha_{n-1})\in\mathcal A$ and $c_0,\dots,c_{n-1}\in F^\times$, $(\alpha_0,\dots,\alpha_{n-1})$ and $(c_0\alpha_0,\dots, c_{n-1}\alpha_{n-1})$ are considered the same element in $\mathcal A$. For $A=(\alpha_0,\dots,\alpha_{n-1})$ and $B=(\beta_0,\dots,\beta_{n-1})$ in $\mathcal A$, define $A\prec B$ if $\alpha_i\mid \beta_i$ for all $0\le i\le n-1$. If $B$ is immediately above $A$, i.e., $B=(\alpha_0,\dots,\alpha_{i-1},\gamma\alpha_i,\alpha_{i+1},\dots,\alpha_{n-1})$ for some $i$ with $\alpha_i\ne0$ and some irreducible factor $\gamma$ of $\alpha_{i-1}/\alpha_i$, we write $A\boxprec\, B$. If $I$ and $J$ are ideals of $R[x]$ with invariant sequences $A$ and $B$, respectively, then $I\subset J$ implies $A\succ B$.

\begin{lem}\label{L4.2} Let $I$ and $J$ be two ideals of $R[x]$. If $I\subset J$ and $I,J$ have the same invariant sequence, then $I=J$.
\end{lem}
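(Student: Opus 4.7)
The plan is to show that any canonical sequence of $I$ is simultaneously a canonical sequence of $J$. Once this is established, Proposition~\ref{P3.1}(i) applied to both ideals gives
\[
I=\langle \pi^0 f_0,\pi^1 f_1,\dots,\pi^{n-1}f_{n-1}\rangle=J,
\]
so the lemma follows immediately. The bulk of the work is therefore the verification that the same tuple $(f_0,\dots,f_{n-1})$ witnesses the canonical-sequence conditions for $J$.

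Concretely, fix a canonical sequence $(f_0,\dots,f_{n-1})$ of $I$ and also a canonical sequence $(g_0,\dots,g_{n-1})$ of $J$. I will check Proposition~\ref{P3.1}(ii) for $J$ with the $f_i$'s. The first requirement is that $\pi^i f_i\in J$ for every $i$; this is free because $\pi^i f_i\in I\subset J$. The second requirement is the minimality condition
\[
\deg\overline{f_i}=\min\{\deg\overline{f}:f\in R[x],\ \pi^i f\in J\}.
\]
Applying Proposition~\ref{P3.1}(ii) to $J$, the right-hand side equals $\deg\overline{g_i}$. The hypothesis that $I$ and $J$ have the same invariant sequence means precisely that $\overline{f_i}$ and $\overline{g_i}$ are associates in $F[x]$ (or both zero), so $\deg\overline{f_i}=\deg\overline{g_i}$ and the minimality equation holds.

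Having verified (ii) for $J$ with the sequence $(f_0,\dots,f_{n-1})$, the equivalence (ii)$\Leftrightarrow$(i) of Proposition~\ref{P3.1} gives $J=\langle \pi^0 f_0,\dots,\pi^{n-1}f_{n-1}\rangle$. Since the same identity holds for $I$ by the choice of $(f_0,\dots,f_{n-1})$, we conclude $I=J$. There is no real obstacle in the argument; the only delicate point is to check that the convention $\deg 0=\infty$ from Proposition~\ref{P3.1} handles the case $\overline{f_i}=\overline{g_i}=0$ uniformly, which it does, so the single line of reasoning covers all indices without splitting into cases.
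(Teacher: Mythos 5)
Your proof is correct. The key observation --- that a canonical sequence of $I$ automatically satisfies condition (ii) of Proposition~\ref{P3.1} for $J$ as well, because membership $\pi^i f_i\in J$ is free from $I\subset J$ and the degree-minimality transfers from the equality of invariant sequences --- is sound, and the conclusion $I=\langle \pi^0f_0,\dots,\pi^{n-1}f_{n-1}\rangle=J$ follows from the $i=0$ case of condition (i). The paper argues differently: it keeps both canonical sequences $(f_i)$ of $I$ and $(g_i)$ of $J$ and proves $I\cap\langle\pi^i\rangle=J\cap\langle\pi^i\rangle$ by backward induction on $i$, writing $\pi^i f_i=\pi^i g_i+\pi^{i+1}r$ and using the induction hypothesis to place $\pi^{i+1}r$, hence $\pi^i g_i$, inside $I$. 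Your route outsources exactly that induction to the already-proved implication (ii)$\Rightarrow$(i) of Proposition~\ref{P3.1}, so it is shorter and cleaner on the page, at the cost of being less self-contained; the two arguments are ultimately the same induction packaged differently. Your closing remark about $\deg 0=\infty$ correctly disposes of the indices where $\overline{f_i}=\overline{g_i}=0$, so no case split is needed.
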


\begin{proof}
Let $(f_0,\dots,f_{n-1})$ be a canonical sequence of $I$ and $(g_0,\dots,g_{n-1})$ a canonical sequence of $J$. We show by induction that $I\cap\langle\pi^i\rangle=J\cap\langle\pi^i\rangle$ for all $0\le i\le n$. The case $i=n$ requires no proof. Assume $0\le i\le n-1$. Since $\overline{f_i}=\overline{g_i}$, we have $\pi^i f_i=\pi^ig_i+\pi^{i+1}r$ for some $r\in R[x]$. Then $\pi^{i+1}r\in J\cap\langle\pi^{i+1}\rangle=I\cap\langle\pi^{i+1}\rangle$ by the induction hypothesis. Thus $\pi^ig_i=\pi^if_i+\pi^{i+1}r\in I\cap\langle\pi^i\rangle$. It follows that $J\cap\langle\pi^i\rangle=\langle\pi^ig_i\rangle+J\cap\langle\pi^{i+1}\rangle\subset I\cap\langle\pi^i\rangle$.
\end{proof}

\begin{prop}\label{P4.3}
Let $I$ and $J$ be ideals of $R[x]$ such that $I\subset J$ and $I\ne R[x]$. Let $(f_0,\dots,f_{n-1})$ and $(g_0,\dots,g_{n-1})$ be canonical sequences of $I$ and $J$, respectively. Then $J/I$ is a minimal ideal of $R[x]/I$ if and only if $(\overline{g_0},\dots,\overline{g_{n-1}})\boxprec\,(\overline{f_0},\dots,\overline{f_{n-1}})$.
\end{prop}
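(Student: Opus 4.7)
The plan is to prove both implications using Lemma~\ref{L4.2} (which pins down an ideal via its invariant sequence given an inclusion) and Proposition~\ref{P3.1}(ii) (which translates $\pi^j f\in I$ into $\overline{f_j}\mid\overline f$, hence $\deg\overline{f_j}\le\deg\overline f$). For $(\Leftarrow)$, I would unpack $B\boxprec A$: the sequences agree at every index except some $i^*$, where $\overline{f_{i^*}}=\gamma\overline{g_{i^*}}$ for an irreducible $\gamma\in F[x]$. Any ideal $K$ with $I\subseteq K\subseteq J$ has invariant sequence $C$ satisfying $\overline{g_j}\mid\overline{c_j}\mid\overline{f_j}$ at each $j$; irreducibility of $\gamma$ restricts $\overline{c_{i^*}}$ to the two values $\overline{g_{i^*}}$ and $\gamma\overline{g_{i^*}}$, while off $i^*$ the value is forced. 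So $C=B$ or $C=A$, and Lemma~\ref{L4.2} then gives $K=J$ or $K=I$, proving $J/I$ minimal.

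For $(\Rightarrow)$ I would argue the contrapositive by constructing, whenever $B\not\boxprec A$, an ideal strictly between $I$ and $J$. Let $S=\{j:\overline{g_j}\ne\overline{f_j}\}$; since $I\ne J$ we have $S\ne\emptyset$, and for $j\in S$ the invariant $\overline{g_j}$ is a proper divisor of $\overline{f_j}$. The failure of $B\boxprec A$ falls into two sub-cases requiring different constructions.

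If $|S|\ge 2$, pick $i_1<i_2$ in $S$ and set $K=I+\langle\pi^{i_2}g_{i_2}\rangle$; Proposition~\ref{P3.1}(ii) gives $I\subsetneq K$ since $\deg\overline{g_{i_2}}<\deg\overline{f_{i_2}}$, and for $K\subsetneq J$ one verifies $\pi^{i_1}g_{i_1}\notin K$ by writing $\pi^{i_1}g_{i_1}=a+\pi^{i_2}g_{i_2}t$, factoring out $\pi^{i_1}$, noting that $\pi^{i_2-i_1}$ vanishes mod $\pi$, and invoking Proposition~\ref{P3.1}(ii) to force $\deg\overline{f_{i_1}}\le\deg\overline{g_{i_1}}$, contradicting $i_1\in S$. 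If instead $S=\{i^*\}$ and the ratio $\alpha=\overline{f_{i^*}}/\overline{g_{i^*}}$ is reducible, pick a proper irreducible factor $\gamma$ of $\alpha$ with lift $\gamma^*\in R[x]$ and set $K'=I+\langle\pi^{i^*}\gamma^*g_{i^*}\rangle$: then $\pi^{i^*}\gamma^*g_{i^*}\in I$ would force $\alpha\mid\gamma$, impossible since $\deg\gamma<\deg\alpha$, while $\pi^{i^*}g_{i^*}\in K'$ would give $\pi^{i^*}g_{i^*}(1-\gamma^*t)\in I$ and hence $\alpha\mid 1-\gamma\overline t$ in $F[x]$, which reduces modulo $\gamma$ to the absurd $0\mid 1$ in $F[x]/(\gamma)$. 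In either sub-case $J/I$ acquires a nontrivial proper submodule, contradicting minimality.

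The main obstacle is recognizing the two qualitatively different failure modes of $B\boxprec A$ and choosing the appropriate intermediate ideal for each: a pure extension $I+\langle\pi^{i_2}g_{i_2}\rangle$ suffices when more than one invariant index differs, but a $\gamma^*$-twisted extension is necessary when only one index differs and the local ratio is reducible. Once the correct candidates are chosen, every strict-containment check reduces to a routine application of Proposition~\ref{P3.1}(ii) and $F[x]$-divisibility arithmetic.
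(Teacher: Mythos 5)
Your proposal is correct, and your backward direction is the paper's argument verbatim: squeeze the invariant sequence of an intermediate $K$ between those of $I$ and $J$ and finish with Lemma~\ref{L4.2}. The forward direction is organized differently. The paper argues directly: assuming $J/I$ minimal, it takes the smallest $i$ with $\overline{f_i}\ne\overline{g_i}$, an irreducible factor $\overline r$ of $\overline{f_i}/\overline{g_i}$, and the single candidate $K=I+\langle\pi^i rg_i\rangle+J\cap\langle\pi^{i+1}\rangle$; minimality forces $K=I$, and the equalities $I=K$ and $J\cap\langle\pi^{i+1}\rangle=K\cap\langle\pi^{i+1}\rangle$ then read off in one stroke that the invariant sequences agree except at $i$, where the ratio is $\overline r$. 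You instead take the contrapositive and give each failure mode of $\boxprec$ its own witness: $I+\langle\pi^{i_2}g_{i_2}\rangle$ when two or more indices differ, and the twisted $I+\langle\pi^{i^*}\gamma^*g_{i^*}\rangle$ when only one differs with reducible ratio. The paper's summand $J\cap\langle\pi^{i+1}\rangle$ is precisely what absorbs your first case, so both arguments run on the same engine (Proposition~\ref{P3.1} plus Lemma~\ref{L4.2}); yours makes each obstruction visible separately at the cost of a case split whose exhaustiveness must be checked --- when $S=\{i^*\}$ and $\overline{f_{i^*}}/\overline{g_{i^*}}=\gamma$ is irreducible, the requirement in the definition of $\boxprec$ that $\gamma$ divide $\overline{g_{i^*-1}}/\overline{g_{i^*}}$ is automatic from $\gamma\,\overline{g_{i^*}}=\overline{f_{i^*}}\mid\overline{f_{i^*-1}}=\overline{g_{i^*-1}}$, so your trichotomy is indeed complete. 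One small citation point: in the singleton case, passing from $\pi^{i^*}g_{i^*}(1-\gamma^*t)\in I$ to $\alpha\mid 1-\gamma\overline t$ needs the divisibility consequence $\overline{f_{i^*}}\mid\overline h$ coming from Proposition~\ref{P3.1}(i), not merely the degree minimality in (ii), since $\deg(1-\gamma\overline t)$ can be large; the conditions are equivalent, so this is cosmetic rather than a gap.
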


\begin{proof}
($\Leftarrow$) Let $K$ be an ideal of $R[x]$ such that $I\subset K\subset J$. Let $(h_0,\dots,h_{n-1})$ be a canonical sequence of $K$. Then $(\overline{f_0},\dots,\overline{f_{n-1}})\succ(\overline{h_0},\dots,\overline{h_{n-1}})\succ(\overline{g_0},\dots,\overline{g_{n-1}})$. By Lemma~\ref{L4.2}, $K=I$ or $J$.

($\Rightarrow$) Let $i$ be the smallest integer such that $\overline{f_i}\ne\overline{g_i}$. Let $r\in R[x]$ be such that $\overline r$ is an irreducible factor of $\overline{f_i}/\overline{g_i}$. Let $K=I+\langle\pi^i rg_i\rangle+J\cap\langle\pi^{i+1}\rangle$. Then $I\subset K\subset J$. By Proposition~\ref{P3.1} (ii), $K$ has a canonical sequence of the form $(f_0,\dots,f_{i-1},rg_i,h_{i+1},\dots,h_{n-1})$. Since $\overline r\,\overline{g_i}\ne\overline{g_i}$, we have $K\ne J$. Since $J/I$ is minimal, we must have $K=I$. Since $J\cap\langle\pi^{i+1}\rangle\subset K\subset J$,  we have $J\cap\langle\pi^{i+1}\rangle=K\cap\langle\pi^{i+1}\rangle$. Therefore
\[
\begin{split}
&(\overline{f_0},\dots,\overline{f_{n-1}})\cr
=\,&(\overline{f_0},\dots,\overline{f_{i-1}},\overline r \,\overline{g_i},\overline{h_{i+1}},\dots,\overline{h_{n-1}})\kern 1cm (\text{since}\ I=K)\cr
=\,&(\overline{g_0},\dots,\overline{g_{i-1}},\overline r\,\overline{g_i},\overline{h_{i+1}},\dots,\overline{h_{n-1}})\kern 1cm (\text{since}\ \overline{f_j}=\overline{g_j}\ \text{for}\ 0\le j<i)\cr
=\,&(\overline{g_0},\dots,\overline{g_{i-1}},\overline r\,\overline{g_i},\overline{g_{i+1}},\dots,\overline{g_{n-1}})\kern 1.1cm (\text{since}\ J\cap\langle\pi^{i+1}\rangle=K\cap\langle\pi^{i+1}\rangle).
\end{split}
\]
Hence $(\overline{f_0},\dots,\overline{f_{n-1}})\boxsucc\;(\overline{g_0},\dots,\overline{g_{n-1}})$.
\end{proof}

\begin{rmk}\label{R4.4}\rm \
\begin{itemize}
  \item [(i)] Let $I,J$ be ideals of $R[x]$ with invariant sequences $A$ and $B$, respectively. If $I\subset J$, then $A\succ B$. The converse is false.
  \item [(ii)] Let $A,B\in\mathcal A$ with $A\succ B$. Given an ideal $I$ with invariant sequence $A$, there may be no ideal $J$ with invariant sequence $B$ such that $I\subset J$. Given an ideal $J$ with invariant sequence $B$, there may be no ideal $I$ with invariant sequence $A$ such that $I\subset J$. See Example~\ref{E5.2}.
  \item [(iii)] If $I\subset J$ are ideals of $R[x]$ with invariant sequences $A$ and $B$, respectively, by Proposition~\ref{P4.3}, there exist ideals $I_1,\dots,I_k$ with invariant sequences $A_1,\dots,A_k$, respectively, such that $A\boxsucc\, A_1\boxsucc\,\cdots\boxsucc\, A_k\boxsucc\, B$ and $I=I_1\subset \cdots\subset I_k\subset J$.
\end{itemize}  
\end{rmk}

\begin{thm}\label{T4.5}
Let $I$ be an ideal of $R[x]$ with canonical sequence $(f_0,\dots,f_{n-1})$, where $\overline{f_i}\ne 0$ and $\overline{f_i}\ne\overline{f_{i+1}}$ for some $i\in\{0,\dots,n-1\}$. Let $r\in R[x]$ be such that $\overline r$ is an irreducible factor of $\overline{f_i}/\overline{f_{i+1}}$. Then the ideals containing $I$ with invariant sequence $(\overline{f_0},\dots,\overline{f_{i-1}},\overline{f_i}/\overline r,\overline{f_{i+1}},\dots,\overline{f_{n-1}})$ are precisely of the form
\begin{equation}\label{4.1}
J=I+\langle\pi^ig\rangle,
\end{equation} 
where $g\in R[x]$ satisfies
\begin{itemize}
  \item [(i)] $\overline g=\overline{f_i}/\overline r$;
  \item [(ii)] $\pi^{i+1}g\in I$;
  \item [(iii)] $\pi^irg\in I$.
\end{itemize}  
If $g,g'\in R[x]$ both satisfy (i) -- (iii), then $I+\langle\pi^ig\rangle=I+\langle\pi^ig'\rangle$ if and only if $\pi^i(g-g')\in I$.
\end{thm}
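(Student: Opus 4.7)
My plan is to treat the two directions of the characterization separately and then settle the uniqueness clause, leaning on Proposition~\ref{P3.1}, Lemma~\ref{L4.2}, and Proposition~\ref{P4.3} as the principal tools. The guiding idea is that replacing $f_i$ by $g$ in the canonical sequence of $I$ should produce a canonical sequence for $J=I+\langle\pi^i g\rangle$, so I will verify this modification one condition at a time.

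For the forward implication, fix an ideal $J$ as in the hypothesis and pick any canonical sequence $(g_0,\dots,g_{n-1})$ of it; set $g:=g_i$. Property~(i) is immediate. The crux is the observation that $I\cap\langle\pi^{i+1}\rangle$ and $J\cap\langle\pi^{i+1}\rangle$ share the invariant sequence $(0,\dots,0,\overline{f_{i+1}},\dots,\overline{f_{n-1}})$, so by Lemma~\ref{L4.2} they coincide. Properties~(ii) and~(iii) then drop out: $\pi^{i+1}g=\pi\cdot\pi^i g_i\in J\cap\langle\pi^{i+1}\rangle=I\cap\langle\pi^{i+1}\rangle\subset I$; and since $\overline{rg}=\overline{f_i}$ yields $rg=f_i+\pi h$, the identity $\pi^i rg=\pi^i f_i+\pi^{i+1}h$ places $\pi^{i+1}h$ in the same tail, so $\pi^i rg\in I$. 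To see that $J=I+\langle\pi^i g\rangle$, Proposition~\ref{P4.3} tells us that $J/I$ is a minimal ideal of $R[x]/I$, so $I+\langle\pi^i g\rangle$ equals either $I$ or $J$; the first option is excluded because $\pi^i g\in I$ would force $\overline{f_i}\mid\overline g$ by Proposition~\ref{P3.1}(ii), contradicting $\deg\overline g<\deg\overline{f_i}$.

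For the backward implication, let $g$ satisfy (i)--(iii) and put $\tilde f_j:=f_j$ for $j\ne i$ and $\tilde f_i:=g$. I plan to verify condition~(iii) of Proposition~\ref{P3.1} for the sequence $(\tilde f_0,\dots,\tilde f_{n-1})$ inside $J:=I+\langle\pi^i g\rangle$, which will show that this is a canonical sequence of $J$ and therefore yield the claimed invariant sequence. The generation statement reduces to showing that $\pi^i f_i$ lies in $\langle\pi^i g,\pi^{i+1}f_{i+1},\dots,\pi^{n-1}f_{n-1}\rangle$; writing $f_i=rg+\pi h$, one gets $\pi^i f_i=r\cdot\pi^i g+\pi^{i+1}h$, and $\pi^{i+1}h=\pi^i f_i-\pi^i rg$ lies in $I\cap\langle\pi^{i+1}\rangle$ thanks to hypothesis~(iii) together with Proposition~\ref{P3.1}(i) for $I$. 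The tower condition at $j=i$ is exactly hypothesis~(ii) combined with the description of $I\cap\langle\pi^{i+1}\rangle$, and the condition at $j=i-1$ follows by threading the already-proved membership $\pi^i f_i\in\langle\pi^i g,\pi^{i+1}f_{i+1},\dots\rangle$ into the original tower relation of $I$ at that index. All other tower conditions are inherited verbatim from $I$.

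For uniqueness, the ``if'' direction is immediate. For ``only if'', the equality $I+\langle\pi^i g\rangle=I+\langle\pi^i g'\rangle$ produces $b\in R[x]$ with $\pi^i(g-bg')\in I$; Proposition~\ref{P3.1}(ii) applied to $I$, combined with $\overline g=\overline{g'}=\overline{f_i}/\overline r$, forces $\overline r\mid \overline{b-1}$, so $b-1=rc+\pi d$ for suitable $c,d\in R[x]$. Rewriting $\pi^i(g-g')=\pi^i(g-bg')+(b-1)\pi^i g'$ and expanding $(b-1)\pi^i g'=c\cdot\pi^i rg'+d\cdot\pi^{i+1}g'$, each summand lies in $I$ by (iii) and (ii), completing the argument. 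The main bookkeeping obstacle I anticipate is the verification at $j=i-1$ and $j=i$ in the backward direction, where the swap of $f_i$ for $g$ must be carefully reconciled with the inherited conditions for the original canonical sequence of $I$.
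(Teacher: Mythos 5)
Your proposal is correct and follows the same three-part structure as the paper's proof, resting on the same key inputs (Proposition~\ref{P3.1}, Lemma~\ref{L4.2}, Proposition~\ref{P4.3}), with the forward direction essentially identical. The only variations are minor: in the backward direction you verify condition (iii) of Proposition~\ref{P3.1} for the swapped sequence, whereas the paper establishes $J\cap\langle\pi^{i+1}\rangle=I\cap\langle\pi^{i+1}\rangle$ by a direct computation and then uses conditions (i)--(ii); and in the uniqueness clause you extract the coefficient $b$ by hand rather than invoking Lemma~\ref{L4.2} together with the canonical sequence of $J'$ --- both routes are valid and equally routine.
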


\begin{proof}
$1^\circ$ We first show that if $J$ is an ideal with invariant sequence $(\overline{f_0},\dots,\overline{f_{i-1}},$ $\overline{f_i}/\overline r,\overline{f_{i+1}},\dots,\overline{f_{n-1}})$ such that $J\supset I$, then $J$ is of the form \eqref{4.1}. Let $(g_0,\dots,g_{n-1})$ be a canonical sequence of $I$ and let $g=g_i$. Then $\overline g=\overline{f_i}/\overline r$. It follows that $\pi^ig\in J\setminus I$. Since $J/I$ is minimal (Proposition~\ref{P4.3}), we must have $J=I+\langle\pi^ig\rangle$. The ideals $J\cap\langle\pi^{i+1}\rangle$ and $I\cap\langle\pi^{i+1}\rangle$ have the same invariant sequence $(0,\dots,0,\overline{f_{i+1}},\dots,\overline{f_{n-1}})$. By Lemma~\ref{L4.2}, $J\cap\langle\pi^{i+1}\rangle=I\cap\langle\pi^{i+1}\rangle$. Clearly, both $\pi^{i+1}g$ and $\pi^i(f_i-rg)$ belong to $J\cap\langle\pi^{i+1}\rangle$. Since $J\cap\langle\pi^{i+1}\rangle=I\cap\langle\pi^{i+1}\rangle$, we have $\pi^{i+1}g, \pi^i(f_i-rg)\in I$, hence (ii) and (iii).

\medskip
$2^\circ$ Now assume that $J$ is of the form \eqref{4.1}. We show that the invariant sequence of $J$ is $(\overline{f_0},\dots,\overline{f_{i-1}},\overline g,\overline{f_{i+1}},\dots,\overline{f_{n-1}})$.

We have
\begin{equation}\label{4.2}
J\cap\langle\pi^i\rangle=\langle\pi^ig\rangle+I\cap\langle\pi^i\rangle.
\end{equation}
We will show that $J\cap\langle\pi^{i+1}\rangle=I\cap\langle\pi^{i+1}\rangle$, which implies that
\begin{equation}\label{4.3}
J\cap\langle\pi^j\rangle=I\cap\langle\pi^j\rangle,\qquad i+1\le j\le n-1.
\end{equation}
By \eqref{4.1} -- \eqref{4.3}, we see that the polynomials $f_0,\dots,f_{i-1},g,f_{i+1},\dots,f_{n-1}$ satisfy \eqref{3.1} for the ideal $J$. Hence $(f_0,\dots,f_{i-1},g,f_{i+1},\dots,f_{n-1})$ is a canonical sequence of $J$ and $(\overline{f_0},\dots,\overline{f_{i-1}},\overline g,\overline{f_{i+1}},\dots,\overline{f_{n-1}})$ is the invariant sequence of $J$.

It remains to show that $J\cap\langle\pi^{i+1}\rangle=I\cap\langle\pi^{i+1}\rangle$. Let $f\in J\cap\langle\pi^{i+1}\rangle$. Since $f\in J\cap\langle\pi^i\rangle=\langle\pi^ig\rangle+I\cap\langle\pi^i\rangle=\langle\pi^ig,\pi^if_i\rangle+I\cap\langle\pi^{i+1}\rangle$, we have $f=a\pi^ig+b\pi^if_i+u$, where $a,b\in R[x]$ and $u\in I\cap\langle\pi^{i+1}\rangle$. Since $f\in\langle\pi^{i+1}\rangle$, we have $\overline a\,\overline g+\overline b\,\overline{f_i}=0$, i.e., $\overline a (\overline{f_i}/{\overline r})+\overline b\,\overline{f_i}=0$. It follows that $\overline a+\overline b\,\overline r=0$, i.e., $a=-br+\pi c$ for some $c\in R[x]$. Thus
\[
f=\pi^i(ag+bf_i)+u=b\pi^i(-rg+f_i)+c\pi^{i+1}g+u\in  I\cap\langle\pi^{i+1}\rangle
\]
by (ii) and (iii). 

\medskip
$3^\circ$ Assume that $g,g'\in R[x]$ both satisfy (i) -- (iii) and let $J=I+\langle \pi^i g\rangle$, $J'=I+\langle\pi^i g'\rangle$. By Lemma~\ref{4.2}, $J=J'$ if and only if $J\subset J'$. It is proved in $2^\circ$ that $(f_0,\dots,f_{i-1},g',f_{i+1},\dots,f_{n-1})$ is a canonical sequence of $J'$. Thus 
\[
\begin{split}
J\subset J'\;&\Leftrightarrow \pi^ig\in\langle\pi^i g',\pi^{i+1}f_{i+1},\dots,\pi^{n-1}f_{n-1}\rangle\cr
 &\Leftrightarrow \pi^i(g-g')\in \langle\pi^{i+1}f_{i+1},\dots,\pi^{n-1}f_{n-1}\rangle\kern0.8cm (\text{since}\ \pi^i(g-g')\in\langle\pi^{i+1}\rangle)\cr
&\Leftrightarrow \pi^i(g-g')\in I.
\end{split}
\]
\end{proof}

For an ideal $I$ of $R[x]$ and an element $p\in R[x]$, we define
\[
[I:p]=\{a\in R[x]:ap\in I\}.
\]

\begin{thm}\label{T4.6}
In the notation of Theorem~\ref{T4.5}, choose $u\in R[x]$ such that $\overline u=\overline{f_i}/(\overline r\overline{f_{i+1}})$.  Then there exists an ideal containing $I$ with invariant sequence \break
$(\overline{f_0},\dots,\overline{f_{i-1}},\overline{f_i}/\overline r,\overline{f_{i+1}},\dots,\overline{f_{n-1}})$ if and only if $\pi^i(f_i-urf_{i+1})\in I\cap\langle\pi^{i+1}\rangle+r[I\cap\langle\pi^{i+2}\rangle:\pi]$.
\end{thm}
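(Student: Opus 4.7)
The plan is to invoke Theorem~\ref{T4.5}, which reduces the question to the existence of a polynomial $g\in R[x]$ satisfying its conditions (i)--(iii), and then to parametrize such $g$. Since $\overline{uf_{i+1}}=\overline u\,\overline{f_{i+1}}=\overline{f_i}/\overline r$, condition (i) is equivalent to $g=uf_{i+1}+\pi h$ for some $h\in R[x]$; so the question becomes whether some $h$ can be chosen so that (ii) and (iii) also hold.

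Next I would rewrite (ii) and (iii) in terms of $h$. Because $\pi^{i+1}f_{i+1}\in I$, we have $\pi^{i+1}g\equiv\pi^{i+2}h\pmod I$, so (ii) is equivalent to $\pi^{i+2}h\in I$. Since $\overline{urf_{i+1}}=\overline{f_i}$, we may choose $w\in R[x]$ with $\pi w=f_i-urf_{i+1}$; then
\[
\pi^irg=\pi^iurf_{i+1}+\pi^{i+1}rh=\pi^if_i-\pi^{i+1}w+\pi^{i+1}rh,
\]
and since $\pi^if_i\in I$, condition (iii) reduces to $\pi^{i+1}w-r\,\pi^{i+1}h\in I$. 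Note that $\pi^{i+1}w=\pi^i(f_i-urf_{i+1})$ is precisely the element appearing in the theorem.

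Introducing $b:=\pi^{i+1}h$, condition (ii) becomes $\pi b\in I$, which together with $\pi b\in\langle\pi^{i+2}\rangle$ gives $b\in[I\cap\langle\pi^{i+2}\rangle:\pi]$; condition (iii) becomes $\pi^{i+1}w-rb\in I$, and since $\pi^{i+1}w,rb\in\langle\pi^{i+1}\rangle$, this is the same as $\pi^{i+1}w-rb\in I\cap\langle\pi^{i+1}\rangle$. The forward direction follows immediately: from any such $h$ we obtain $\pi^{i+1}w=(\pi^{i+1}w-rb)+rb\in I\cap\langle\pi^{i+1}\rangle+r[I\cap\langle\pi^{i+2}\rangle:\pi]$.

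The hard part will be the converse: given $\pi^{i+1}w=u_1+rb$ with $u_1\in I\cap\langle\pi^{i+1}\rangle$ and $b\in[I\cap\langle\pi^{i+2}\rangle:\pi]$, one must produce $h\in R[x]$ with $\pi^{i+1}h=b$, i.e.\ show $b\in\langle\pi^{i+1}\rangle$. From $\pi b\in\langle\pi^{i+2}\rangle$ write $\pi b=\pi^{i+2}c$, so $\pi(b-\pi^{i+1}c)=0$; since the annihilator of $\pi$ in $R$ equals $\langle\pi^{n-1}\rangle$ (as $R$ is a chain ring with $\pi$ of nilpotency $n$), we get $b-\pi^{i+1}c\in\langle\pi^{n-1}\rangle\subseteq\langle\pi^{i+1}\rangle$ (using $i+1\le n-1$), so $b\in\langle\pi^{i+1}\rangle$. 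With this $h$, the translations above give (ii) and (iii) for $g=uf_{i+1}+\pi h$, and Theorem~\ref{T4.5} delivers the desired ideal.
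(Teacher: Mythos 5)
Your proposal is correct and follows essentially the same route as the paper: write $g=uf_{i+1}+\pi h$, translate conditions (ii) and (iii) of Theorem~\ref{T4.5} into conditions on $\pi^{i+1}h$, and observe that these are solvable exactly when $\pi^i(f_i-urf_{i+1})\in I\cap\langle\pi^{i+1}\rangle+r[I\cap\langle\pi^{i+2}\rangle:\pi]$. The only difference is that you explicitly verify, via the annihilator $\langle\pi^{n-1}\rangle$ of $\pi$, that every element of $[I\cap\langle\pi^{i+2}\rangle:\pi]$ lies in $\langle\pi^{i+1}\rangle$ — a point the paper uses without comment when it writes $l=\pi^{i+1}h$.
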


\begin{proof}
Let $g=uf_{i+1}+\pi h$, where $h\in R[x]$. Then $\overline g=\overline{f_i}/\overline r$. We have 
\[
\begin{split}
\pi^{i+1}g\,&=\pi^{i+1}uf_{i+1}+\pi^{i+2}h,\cr
\pi^irg\,&=\pi^iruf_{i+1}+\pi^{i+1}rh=\pi^if_i-v+\pi^{i+1}rh,
\end{split}
\]
where $v=\pi^i(f_i-urf_{i+1})\in \langle\pi^{i+1}\rangle$.
Therefore $g$ satisfies (ii) and (iii) of Theorem~\ref{T4.5} if and only if 
\begin{equation}\label{4.4}
\begin{cases}
\pi^{i+2}h\in I,\cr
v-\pi^{i+1}rh\in I.
\end{cases}
\end{equation}
By Theorem~\ref{T4.5}, it suffices to show that there exists $h\in R[x]$ satisfying \eqref{4.4} if and only if $v\in I\cap\langle\pi^{i+1}\rangle+r[I\cap\langle\pi^{i+2}\rangle:\pi]$. If $h$ satisfies \eqref{4.4}. Then $\pi^{i+1}h\in [I\cap\langle\pi^{i+2}\rangle:\pi]$ and 
\[
v\in r\pi^{i+1}h+I\cap\langle\pi^{i+1}\rangle\subset r[I\cap\langle\pi^{i+2}\rangle:\pi]+ I\cap\langle\pi^{i+1}\rangle.
\]
If $v\in r[I\cap\langle\pi^{i+2}\rangle:\pi]+ I\cap\langle\pi^{i+1}\rangle$, then $v\in rl+I\cap\langle\pi^{i+1}\rangle$ for some $l\in [I\cap\langle\pi^{i+2}\rangle:\pi]$. Write $l=\pi^{i+1}h$, where $h\in R[x]$. Then $h$ satisfies \eqref{4.4}.
\end{proof}

\begin{rmk}\label{R4.7}\rm
In Theorem~\ref{T4.6}, if $\pi^i(f_i-urf_{i+1})\in r[I\cap\langle\pi^{i+2}\rangle:\pi]+ I\cap\langle\pi^{i+1}\rangle$, write $\pi^i(f_i-urf_{i+1})\equiv rl\pmod{I\cap\langle\pi^{i+1}\rangle}$, where $l\in [I\cap\langle\pi^{i+2}\rangle:\pi]$. Let $\mathcal C$ be a set of coset representatives of $I\cap\langle\pi^{i+1}\rangle$ in $[I\cap\langle\pi^{i+2}\rangle:\pi]\cap [I\cap\langle\pi^{i+1}\rangle:r]$. Then 
\[
I+\langle \pi^iuf_{i+1}+l+c\rangle,\qquad c\in\mathcal C,
\]
is an enumeration of all distinct ideals containing $I$ with invariant sequence $(\overline{f_0},\dots,$ $\overline{f_{i-1}},\overline{f_i}/\overline r,\overline{f_{i+1}},\dots,\overline{f_{n-1}})$.
\end{rmk}

%%%%%%%%%%%%%%%%%%%%%%%%%%%%%%%%%%%%%%%%%%%%%%
%   section 5
%%%%%%%%%%%%%%%%%%%%%%%%%%%%%%%%%%%%%%%%%%%%%%

\section{An Example of Ideal Lattice}

Recall that $\frak F$ is a set of coset representatives of $\pi R$ in $R$ with $0\in\frak F$ and $\frak F[x]$ is the set of polynomials in $F[x]$ with coefficients in $\frak F$. Let $A=(\alpha_0,\dots,\alpha_{n-1})\in\mathcal A$ and let $\mathcal I^A$ be the set of all ideals of $R[x]$ with invariant sequence $A$. Assume that $\alpha_{i-1}=0$ but $\alpha_i\ne 0$. Choose $a_j\in\frak F[x]$ such that $\overline{a_j}=\alpha_j/\alpha_{j+1}$, $i\le j\le n-1$. (We define $\alpha_n=1$.) Let
\[
\mathcal B=\Bigl\{(b_{jk})_{i\le j\le n-2,\, 1\le k\le n-j-1}:b_{jk}\in\frak F[x],\ b_{jk}=0\ \text{or}\ \deg b_{jk}<\deg\frac{\alpha_{j+k}}{\alpha_{j+k+1}}\Bigr\}.
\]
For each $(b_{jk})\in\mathcal B$, construct inductively
\begin{equation}\label{5.1}
\begin{cases}
f_n=1,\cr
f_j=a_jf_{j+1}+\pi b_{j1}f_{j+2}+\cdots+\pi^{n-j-1}b_{j,n-j-1}f_n,&i\le j\le n-1,\cr
f_j=0,& 0\le j<i.
\end{cases}
\end{equation}
Then clearly $(f_0,\dots,f_{n-1})$ satisfies \eqref{3.2} and hence is the canonical sequence of the ideal $\langle\pi^0f_0,\cdots,\pi^{n-1}f_{n-1}\rangle$ with invariant sequence $A$.

\begin{prop}\label{P5.1}
In the above notation, the mapping
\begin{equation}\label{5.2}
\begin{array}{ccc}
\mathcal B&\longrightarrow&\mathcal I^A\cr
(b_{jk})&\longmapsto& \langle\pi^0f_0,\cdots,\pi^{n-1}f_{n-1}\rangle
\end{array}
\end{equation}
is a bijection.
\end{prop}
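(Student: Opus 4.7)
The strategy is to dispose of well-definedness quickly (the polynomials built by (5.1) satisfy (3.2) by direct substitution, so Proposition~3.1(iii)$\Leftrightarrow$(i) makes $(f_0,\dots,f_{n-1})$ a canonical sequence of the ideal it generates, and $\overline{f_j}=\overline{a_j}\,\overline{f_{j+1}}=(\alpha_j/\alpha_{j+1})\alpha_{j+1}=\alpha_j$ inductively confirms the invariant sequence is $A$) and then prove injectivity and surjectivity by reverse induction on $j$ from $n-1$ down to $i$. The key structural feature I plan to exploit is that (5.1) is triangular: $f_j$ depends only on $f_{j+1},\dots,f_n$, so at each stage the $b_{jk}$ (for varying $k$) can be isolated level by level in $\pi$.

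For injectivity, suppose $(b_{jk}),(b'_{jk})\in\mathcal B$ produce canonical sequences $(f_j),(f'_j)$ of the same ideal $I$. With $j=n-1$ as the vacuous base case and $f_{j'}=f'_{j'}$ for $j'>j$ as inductive hypothesis, (5.1) gives
\[
f_j-f'_j=\sum_{k=1}^{n-j-1}\pi^k(b_{jk}-b'_{jk})\,f_{j+k+1},
\]
and $\pi^j(f_j-f'_j)\in I\cap\langle\pi^{j+1}\rangle=\langle\pi^{j+1}f_{j+1},\dots,\pi^{n-1}f_{n-1}\rangle$ by Proposition~3.1(i). Equating and reducing modulo $\pi^{j+2}$ and then modulo $\pi$ yields $\overline{b_{j1}-b'_{j1}}\cdot\alpha_{j+2}=\overline{c}\,\alpha_{j+1}$ in $F[x]$ for some $\overline{c}$, hence $(\alpha_{j+1}/\alpha_{j+2})\mid\overline{b_{j1}-b'_{j1}}$. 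The degree bound defining $\mathcal B$ forces the difference to vanish in $F[x]$, and uniqueness of coset representatives in $\frak F$ upgrades this to $b_{j1}=b'_{j1}$. Subtracting this term and iterating the same argument at successively higher $\pi$-levels gives $b_{jk}=b'_{jk}$ for all $k$.

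For surjectivity, given $I\in\mathcal I^A$, I start from any canonical sequence $(g_0,\dots,g_{n-1})$ of $I$ and modify it into form (5.1) by reverse induction. At step $j$, assume $f_{j+1},\dots,f_{n-1}$ of form (5.1) have been constructed so that $(g_0,\dots,g_j,f_{j+1},\dots,f_{n-1})$ is still a canonical sequence of $I$. The leading coefficient $a_j$ is forced by $\overline{g_j}/\overline{f_{j+1}}=\alpha_j/\alpha_{j+1}=\overline{a_j}$; writing $g_j=a_jf_{j+1}+\pi u_0$ and invoking $\pi^{j+1}g_j\in\langle\pi^{j+1}f_{j+1},\dots,\pi^{n-1}f_{n-1}\rangle$ (Proposition~3.1(i)), then reducing modulo $\pi^{j+3}$ and modulo $\pi$, yields $\overline{u_0}=\overline{c_1'}\alpha_{j+1}+\overline{c_2}\alpha_{j+2}$, so $\alpha_{j+2}\mid\overline{u_0}$ in $F[x]$. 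Polynomial division of $\overline{u_0}/\alpha_{j+2}$ by the monic $\alpha_{j+1}/\alpha_{j+2}$ then produces a remainder of degree below $\deg(\alpha_{j+1}/\alpha_{j+2})$ that lifts to $b_{j1}\in\frak F[x]$. Iterating the same procedure at each higher $\pi$-level (with the requisite divisibility again extracted from the canonical-sequence relation via Proposition~3.1(i)) produces $b_{j2},\dots,b_{j,n-j-1}$. The resulting $f_j$ has form (5.1), satisfies $\pi^jf_j\in I$ by construction, and Proposition~3.1(ii) confirms that replacing $g_j$ by $f_j$ preserves the canonical-sequence property; set $f_j=0$ for $j<i$. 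The main technical point throughout is the $\pi$-adic bookkeeping that keeps the remainders in $\frak F[x]$ with the prescribed degree bounds, which is clean because each $\alpha_{j+k}/\alpha_{j+k+1}$ is monic in $F[x]$.
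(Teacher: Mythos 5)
Your proof is correct and follows essentially the same route as the paper's: surjectivity by backward induction, rewriting a given canonical sequence into the form (5.1) level by level in $\pi$ via division with remainder by the monic quotients $\alpha_{j+k}/\alpha_{j+k+1}$, and injectivity by extracting the $\pi$-adic leading coefficient of $\pi^j(f_j-f_j')$ and noting its degree is too small for membership in $\langle\pi^{j+k}f_{j+k},\dots,\pi^{n-1}f_{n-1}\rangle$. The only cosmetic difference is that you run the injectivity argument as a full induction over all $(j,k)$ with divisibility-plus-degree constraints, whereas the paper isolates the first differing parameter $b_{uv}$ and exhibits a single element of $J'\setminus J$; the underlying computation is identical.
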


\begin{proof}
$1^\circ$ Let $I\in\mathcal I^A$. We show that $I$ has a canonical sequence of the form \ref{5.1}.

Let $(g_0,\dots,g_{n-1})$ be a canonical sequence of $I$ and set $g_n=1$. Since $\alpha_{i-1}=0$, we have $I\subset\langle\pi^i\rangle$. Thus we may assume $g_0=\cdots=g_{i-1}=0$. We show by induction that $g_n,\dots,g_i$ can be replaced with some $f_n,\dots,f_i$ of the form \eqref{5.1} without changing the ideal $I$. Assume that $g_n,\dots,g_{j+1}$ have been replaced by $f_n,\dots,f_{j+1}$, where $i\le j\le n-1$. Since $\overline{g_j}=\overline {a_j}\overline{f_{j+1}}$, we have $\pi^{j+1}g_j\equiv\pi^{j+1}a_jf_{j+1}\pmod{I\cap\langle\pi^{j+2}\rangle}$. Since $I\cap\langle\pi^{j+2}\rangle=\langle \pi^{j+2}f_{j+2},\dots,\pi^{n-1}f_{n-1}\rangle$, we have 
\begin{equation}\label{5.3}
\pi^{j+1}g_j\equiv a_j\pi^{j+1}f_{j+1}+c_{j1}\pi^{j+2}f_{j+2}\pmod{I\cap\langle\pi^{j+3}\rangle}
\end{equation}
for some $c_{j1}\in R[x]$. Let $b_{j1}\in\frak F[x]$ be such that $\overline{c_{j1}}\equiv \overline{b_{j1}}\pmod{\overline{f_{j+1}}/\overline{f_{j+2}}}$ and $b_{j1}=0$ or $\deg \overline{b_{j1}}<\deg(\overline{f_{j+1}}/\overline{f_{j+2}})$. Then 
\begin{equation}\label{5.4}
c_{j1}f_{j+2}\equiv b_{j1}f_{j+2}+d_1f_{j+1}\pmod{\langle\pi\rangle}
\end{equation}
for some $d_1\in R[x]$. By \eqref{5.3} and \eqref{5.4},
\[
\pi^{j+1}g_j\equiv a_j\pi^{j+1}f_{j+1}+b_{j1}\pi^{j+2}f_{j+2}+d_1\pi^{j+2}f_{j+1}\pmod{I\cap\langle\pi^{j+3}\rangle}.
\]
Continuing this way, we have
\[
\begin{split}
\pi^{j+1}g_j=\,&a_j\pi^{j+1}f_{j+1}+b_{j1}\pi^{j+2}f_{j+2}+\cdots+b_{j,n-j-2}\pi^{n-1}f_{n-1}\cr
&+d_1\pi^{j+2}f_{j+1}+\cdots+d_{n-j-2}\pi^{n-1}f_{n-2},
\end{split}
\]
where $b_{jk}\in\frak F[x]$, $b_{jk}=0$ or $\deg\overline {b_{jk}}<\deg(\overline{f_{j+k}}/\overline{f_{j+k+1}})$, $1\le k\le n-j-2$, $d_1,\dots,d_{n-j-2}\in R[x]$.
Therefore
\begin{equation}\label{5.5}
\pi^jg_j\equiv a_j\pi^{j}f_{j+1}+b_{j1}\pi^{j+1}f_{j+2}+\cdots+b_{j,n-j-2}\pi^{n-2}f_{n-1}+\pi^{n-1}c\pmod I
\end{equation}
for some $c\in R[x]$. Let $b_{j,n-j-1}\in\frak F[x]$ be such that $\overline c\equiv\overline{b_{j,n-j-1}}\pmod{\overline{f_{n-1}}}$ and $b_{j,n-j-1}=0$ or $\deg\overline{b_{j,n-j-1}}<\deg\overline{f_{n-1}}$. Then
\begin{equation}\label{5.6}
c\equiv b_{j,n-j-1}+d_{n-j-1}f_{n-1}\pmod{\langle\pi\rangle}
\end{equation}
for some $d_{n-j-1}\in R[x]$. By \eqref{5.5} and \eqref{5.6} we have
\[
\begin{split}
\pi^jg_j\,&\equiv a_j\pi^jf_{j+1}+b_{j1}\pi^{j+1}f_{j+2}+\cdots+b_{j,n-j-2}\pi^{n-2}f_{n-1}+b_{j,n-j-1}\pi^{n-1}f_n\pmod I\cr
&=\pi^j(a_jf_{j+1}+\pi b_{j1}f_{j+2}+\cdots+\pi^{n-j-1}b_{j,n-j-1}f_n).
\end{split}
\]
Hence we can replace $g_j$ with $a_jf_{j+1}+\pi b_{j1}f_{j+2}+\cdots+\pi^{n-j-1}b_{j,n-j-1}f_n$. 

\medskip
$2^\circ$ Now let $(b_{jk}),(b_{jk}')\in \mathcal B$ be different. We show that their corresponding ideals in \eqref{5.2} are different. Let $(f_0,\dots,f_{n-1})$ and  $(f_0',\dots,f_{n-1}')$
be canonical sequences corresponding to $(b_{jk})$ and $(b_{jk}')$, respectively. Let $u,v$ ($i\le u\le n-2,\ 1\le v\le n-u-1$) be integers such that $b_{uv}\ne b_{uv}'$ but $b_{jk}=b_{jk}'$ for all $j,k$ with $j>u$ or $j=u$ and $k<v$. The by \eqref{5.1}, $f_j=f_j'$ for all $j>u$ and 
\begin{equation}\label{5.7}
\pi^u(f_u-f_u')\equiv\pi^{u+v}(b_{uv}-b_{uv}')f_{u+v+1}\pmod{\langle\pi^{u+v+1}\rangle}.
\end{equation}
Since $(\overline{b_{uv}}-\overline{b_{uv}'})\overline{f_{u+v+1}}\ne 0$ and $\deg (\overline{b_{uv}}-\overline{b_{uv}'})\overline{f_{u+v+1}}<\deg\overline{f_{u+v}}$, we have that $\pi^u(f_u-f_u')\notin\langle\pi^{u+v}f_{u+v},\dots,\pi^{n-1}f_{n-1}\rangle$. It follows that $\pi^u(f_u-f_u')\notin\langle\pi^0f_0,\dots,\pi^{n-1}f_{n-1}\rangle$. (Otherwise, since $\pi^u(f_u-f_u')\in\langle\pi^{u+v}\rangle$, we have $\pi^u(f_u-f_u')\in\langle\pi^0f_0,\dots,\pi^{n-1}f_{n-1}\rangle\cap \langle\pi^{u+v}\rangle=\langle\pi^{u+v}f_{u+v},\dots,\pi^{n-1}f_{n-1}\rangle$, which is a contradiction.) Then $\pi^uf_u'\notin\langle\pi^0f_0,\dots,\pi^{n-1}f_{n-1}\rangle$, and hence $\langle\pi^0f_0',\dots,\pi^{n-1}f_{n-1}'\rangle\ne \langle\pi^0f_0,\dots,\pi^{n-1}f_{n-1}\rangle$.
\end{proof}

\begin{exmp}\label{E5.2}\rm 
Let $R=\Bbb Z_4[\pi]/\langle\pi^2-2, 2\pi\rangle$. This is a finite commutative chain ring with $\text{rad}\,R=\pi R$, where $\pi$ is of nilpotency $3$, and the residue field $F=R/\text{rad}\,\pi R\cong\Bbb F_2$. Let $\alpha=x+1$, $\beta=x^2+x+1\in \Bbb F_2[x]$, which are irreducible. We will determine all ideals of $R[x]$ with invariant sequence $A\prec(\alpha^2\beta,\alpha,\alpha)$, and we will determine the partial order (inclusion) among all these ideals.

The partial order $\prec$ on $\{A\in\mathcal A:A\prec(\alpha^2\beta,\alpha,\alpha)\}$ is depicted in Figure~\ref{F1}. For each $A\prec(\alpha^2\beta,\alpha,\alpha)$, we use \eqref{5.1} to enumerate the members of $\mathcal I^A$ in terms their canonical sequences. The members of $\mathcal I^A$ are denoted by $I^A_{(\cdots)}$, where $(\cdots)$ are the parameters that appear in the canonical sequence of the ideal. See Table~\ref{Tb1} for the details. 

%%%%%%%%%%%%%%%%%%%%%%%%% figure 1 %%%%%%%%%%%%%%%%%%%%%%%%%%%%%%%%%%%%%%%%%%%
\begin{figure}
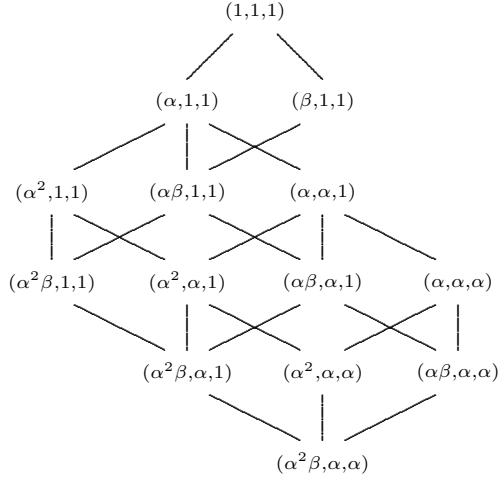

\[
\beginpicture
\setcoordinatesystem units <3mm,3mm> point at 0 0

\setlinear
\plot 11 1  7 3  /
\plot 12 1  12 3 /
\plot 13 1  17 3 /
\plot 5 5  1 7  /
\plot 6 5  6 7  /
\plot 7 5  11 7  /
\plot 11 5  7 7  /
\plot 13 5  17 7  /
\plot 17 5  13 7  /
\plot 18 5  18 7  /
\plot 0 9  0 11  /
\plot 1 9  5 11  /
\plot 5 9  1 11 /
\plot 7 9  11 11  /
\plot 11 9  7 11  /
\plot 12 9  12 11  /
\plot 17 9  13 11  /
\plot 1 13  5 15  /
\plot 6 13  6 15  /
\plot 7 13  11 15  /
\plot 11 13  7 15 /
\plot 6 17  8 19  /
\plot 12 17  10 19  /

\put {$\scriptstyle(\alpha^2\beta,\alpha,\alpha)$} at 12 0
\put {$\scriptstyle(\alpha^2\beta,\alpha,1)$} at 6 4
\put {$\scriptstyle(\alpha^2,\alpha,\alpha)$} at 12 4
\put {$\scriptstyle(\alpha\beta,\alpha,\alpha)$} at 18 4
\put {$\scriptstyle(\alpha^2\beta,1,1)$} at 0 8
\put {$\scriptstyle(\alpha^2,\alpha,1)$} at 6 8
\put {$\scriptstyle(\alpha\beta,\alpha,1)$} at 12 8
\put {$\scriptstyle(\alpha,\alpha,\alpha)$} at 18 8
\put {$\scriptstyle(\alpha^2,1,1)$} at 0 12
\put {$\scriptstyle(\alpha\beta,1,1)$} at 6 12
\put {$\scriptstyle(\alpha,\alpha,1)$} at 12 12
\put {$\scriptstyle(\alpha,1,1)$} at 6 16
\put {$\scriptstyle(\beta,1,1)$} at 12 16
\put {$\scriptstyle(1,1,1)$} at 9 20

\endpicture
\]
\caption{The partial order $\prec$ on $\{A\in\mathcal A:A\prec(\alpha^2\beta,\alpha,\alpha)\}$}\label{F1}
\end{figure}

\begin{table}
\caption{Canonical sequences of $I^A_{(\cdots)}$, $A\prec (\alpha^2\beta,\alpha,\alpha)$}\label{Tb1}
\begin{tabular}{c|c|c|l}
\hline
inv. sequence& parameters & \multicolumn{2}{c}{canonical sequence}\\ \hline
& & $f_2$ & $x+1$\\ \cline{3-4}
$(\alpha^2\beta,\alpha,\alpha)$ & $b_{11},\; b_{02}\in\{0,1\}$ & $f_1$ & $x+1+\pi b_{11}$ \\ \cline{3-4}
& & $f_0$ & $(x+1)(x^2+x+1)(x+1+\pi b_{11})+\pi^2 b_{02}$ \\ \hline
%%%%%%
& & $f_2$ & $1$\\ \cline{3-4}
$(\alpha^2\beta,\alpha,1)$ & $b_{01}\in\{0,1\}$ & $f_1$ & $x+1$ \\ \cline{3-4}
& & $f_0$ & $(x+1)^2(x^2+x+1)+\pi b_{01}$ \\ \hline
%%%%%%
& & $f_2$ & $x+1$\\ \cline{3-4}
$(\alpha^2,\alpha,\alpha)$ & $b_{11},\;b_{02}\in\{0,1\}$ & $f_1$ & $x+1+\pi b_{11}$ \\ \cline{3-4}
& & $f_0$ & $(x+1)(x+1+\pi b_{11})+\pi^2 b_{02}$ \\ \hline
%%%%%%
& & $f_2$ & $x+1$\\ \cline{3-4}
$(\alpha\beta,\alpha,\alpha)$ & $b_{11},\;b_{02}\in\{0,1\}$ & $f_1$ & $x+1+\pi b_{11}$ \\ \cline{3-4}
& & $f_0$ & $(x^2+x+1)(x+1+\pi b_{11})+\pi^2 b_{02}$ \\ \hline
%%%%%%
& & $f_2$ & $1$\\ \cline{3-4}
$(\alpha^2\beta,1,1)$ &  & $f_1$ & $1$ \\ \cline{3-4}
& & $f_0$ & $(x+1)^2(x^2+x+1)$ \\ \hline
%%%%%%
& & $f_2$ & $1$\\ \cline{3-4}
$(\alpha^2,\alpha,1)$ & $b_{01}\in\{0,1\}$ & $f_1$ & $x+1$ \\ \cline{3-4}
& & $f_0$ & $(x+1)^2+\pi b_{01}$ \\ \hline
%%%%%%
& & $f_2$ & $1$\\ \cline{3-4}
$(\alpha\beta,\alpha,1)$ & $b_{01}\in\{0,1\}$ & $f_1$ & $x+1$ \\ \cline{3-4}
& & $f_0$ & $(x+1)(x^2+x+1)+\pi b_{01}$ \\ \hline
%%%%%%
& & $f_2$ & $x+1$\\ \cline{3-4}
$(\alpha,\alpha,\alpha)$ & $b_{11},\;b_{02}\in\{0,1\}$ & $f_1$ & $x+1+\pi b_{11}$ \\ \cline{3-4}
& & $f_0$ & $x+1+\pi b_{11}+\pi^2 b_{02}$ \\ \hline
%%%%%%
& & $f_2$ & $1$\\ \cline{3-4}
$(\alpha^2,1,1)$ &  & $f_1$ & $1$ \\ \cline{3-4}
& & $f_0$ & $(x+1)^2$ \\ \hline
%%%%%%
& & $f_2$ & $1$\\ \cline{3-4}
$(\alpha\beta,1,1)$ &  & $f_1$ & $1$ \\ \cline{3-4}
& & $f_0$ & $(x+1)(x^2+x+1)$ \\ \hline
%%%%%%
& & $f_2$ & $1$\\ \cline{3-4}
$(\alpha,\alpha,1)$ & $b_{01}\in\{0,1\}$ & $f_1$ & $x+1$ \\ \cline{3-4}
& & $f_0$ & $x+1+\pi b_{01}$ \\ \hline
%%%%%%
& & $f_2$ & $1$\\ \cline{3-4}
$(\alpha,1,1)$ &  & $f_1$ & $1$ \\ \cline{3-4}
& & $f_0$ & $x+1$ \\ \hline
%%%%%%
& & $f_2$ & $1$\\ \cline{3-4}
$(\beta,1,1)$ &  & $f_1$ & $1$ \\ \cline{3-4}
& & $f_0$ & $x^2+x+1$ \\ \hline
%%%%%%
& & $f_2$ & $1$\\ \cline{3-4}
$(1,1,1)$ &  & $f_1$ & $1$ \\ \cline{3-4}
& & $f_0$ & $1$ \\ \hline
\end{tabular}
\end{table}

There are 30 ideals of $R[x]$ with invariant sequence $A\prec(\alpha^2\beta,\alpha,\alpha)$. To determine the inclusion relations among these ideals, we only have to determine when $I^A_{(\cdots)}\subset I^B_{(\cdots)}$, where $A\prec(\alpha^2\beta,\alpha,\alpha)$ and $A\boxsucc\,B$. The necessary and sufficient condition for $I^A_{(\cdots)}\subset I^B_{(\cdots)}$ in terms of the parameters is easily found by reducing the generators of $I^A_{(\cdots)}$ by the canonical sequences of $I^B_{(\cdots)}$; see \eqref{3.3}. For example, we have 
\[
I^{(\alpha^2\beta,\alpha,\alpha)}_{b_{11},b_{02}}\subset I^{(\alpha^2\beta,\alpha,1)}_{b_{01}'} \Leftrightarrow b_{01}'=0.
\]
The diagram of the inclusion relations among all members of $\bigcup_{A\prec(\alpha^2\beta,\alpha,\alpha)}\mathcal I^A$ is obtained as follows: Replace each $A$ in Figure~\ref{F1} with the members of $\mathcal I^A$ and replace each edge going up from $A$ to $B$ in Figure~\ref{F1} with suitable edges from $\mathcal I^A$ to $\mathcal I^B$ using the necessary and sufficient condition for $I^A_{(\cdots)}\subset I^B_{(\cdots)}$. For example, the edge 
\[
\begin{array}{c}
(\alpha^2\beta,\alpha,1)\cr
\Big\vert\cr
(\alpha^2\beta,\alpha,\alpha)
\end{array}
\]
in Figure~\ref{F1} is replaced with the diagram in Figure~\ref{F2}. The complete diagram of $\bigl(\bigcup_{A\prec(\alpha^2\beta,\alpha,\alpha)}\mathcal I^A,\;\subset\bigr)$ is given in Figure~\ref{F3}.  

\begin{figure}[h]
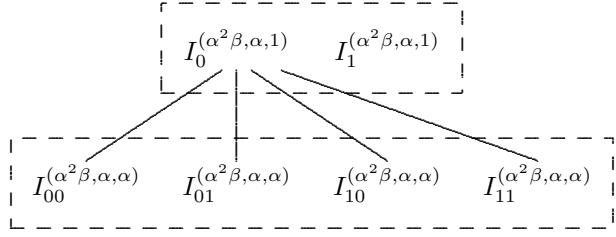

\[
\beginpicture
\setcoordinatesystem units <4mm,3mm> point at 0 0

\setlinear
\plot 2.5 3  7 7  /
\plot 7.5 3  7.5 7 /
\plot 12.5 3  8 7 /
\plot 17.5 3  9 7  /
\setdashes
\plot 0 0  20 0  20 4  0 4  0 0  /
\plot 5 6  15 6  15 10  5 10  5 6  /

\put {$ I_{00}^{(\alpha^2\beta,\alpha,\alpha)}$} at 2.5 2 
\put {$ I_{01}^{(\alpha^2\beta,\alpha,\alpha)}$} at 7.5 2 
\put {$ I_{10}^{(\alpha^2\beta,\alpha,\alpha)}$} at 12.5 2 
\put {$ I_{11}^{(\alpha^2\beta,\alpha,\alpha)}$} at 17.5 2 
\put {$ I_0^{(\alpha^2\beta,\alpha,1)}$} at 7.5 8
\put {$ I_1^{(\alpha^2\beta,\alpha,1)}$} at 12.5 8
\endpicture
\]
\caption{Ideals in $\mathcal I^{(\alpha^2\beta,\alpha,\alpha)}$ and $\mathcal I^{(\alpha^2\beta,\alpha,1)}$}\label{F2}
\end{figure}
\end{exmp}

\newpage
\begin{figure}
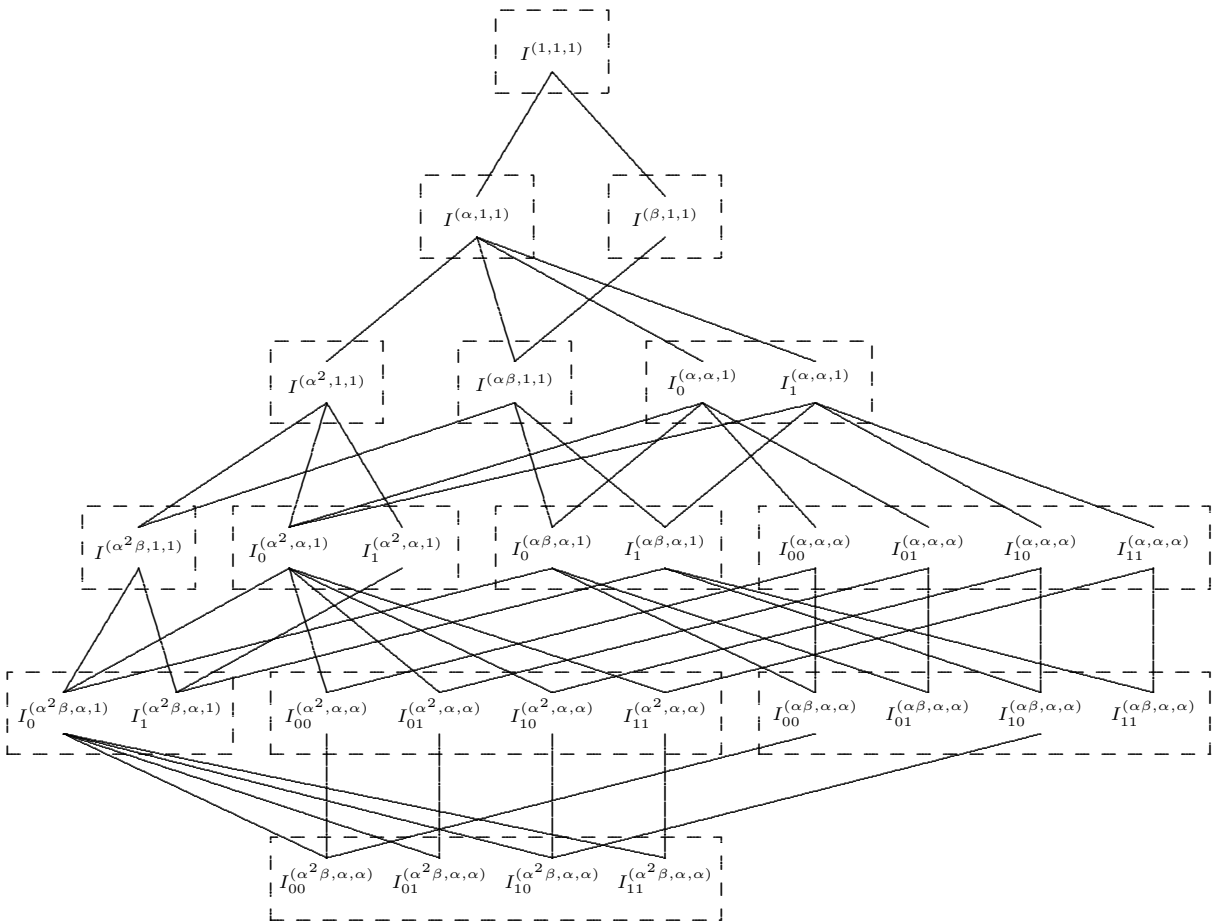

\vskip5mm
\begin{sideways}
\beginpicture
\setcoordinatesystem units <5mm,5.5mm> point at 0 0

\setlinear
\plot 9 1.5  2 4.5 /
\plot 9 1.5  9 4.5  /
\plot 9 1.5  22 4.5 /
\plot 12 1.5  2 4.5 /
\plot 12 1.5  12 4.5 /
\plot 15 1.5  2 4.5 /
\plot 15 1.5  15 4.5 /
\plot 15 1.5  28 4.5 /
\plot 18 1.5  2 4.5 /
\plot 18 1.5  18 4.5 /
\plot 2 5.5  4 8.5 /
\plot 2 5.5  8 8.5 /
\plot 2 5.5  15 8.5 /
\plot 5 5.5  4 8.5 /
\plot 5 5.5  11 8.5 /
\plot 5 5.5  18 8.5 /
\plot 9 5.5  8 8.5 /
\plot 9 5.5  22 8.5 /
\plot 12 5.5  8 8.5 /
\plot 12 5.5  25 8.5 /
\plot 15 5.5  8 8.5 /
\plot 15 5.5  28 8.5 /
\plot 18 5.5  8 8.5 /
\plot 18 5.5  31 8.5 /
\plot 22 5.5  15 8.5 /
\plot 22 5.5  22 8.5 /
\plot 25 5.5  15 8.5 /
\plot 25 5.5  25 8.5 /
\plot 28 5.5  18 8.5 /
\plot 28 5.5  28 8.5 /
\plot 31 5.5  18 8.5 /
\plot 31 5.5  31 8.5 /
\plot 4 9.5  9 12.5 /
\plot 4 9.5  14 12.5 /
\plot 8 9.5  9 12.5 /
\plot 8 9.5  19 12.5 /
\plot 8 9.5  22 12.5 /
\plot 11 9.5  9 12.5 /
\plot 15 9.5  14 12.5 /
\plot 15 9.5  19 12.5 /
\plot 18 9.5  14 12.5 /
\plot 18 9.5  22 12.5 /
\plot 22 9.5  19 12.5 /
\plot 25 9.5  19 12.5 /
\plot 28 9.5  22 12.5 /
\plot 31 9.5  22 12.5 /
\plot 9 13.5  13 16.5 /
\plot 14 13.5  13 16.5 /
\plot 14 13.5  18 16.5 /
\plot 19 13.5  13 16.5 /
\plot 22 13.5  13 16.5 /
\plot 13 17.5  15 20.5 /
\plot 18 17.5  15 20.5 /

\setdashes
\plot 7.5 0  19.5 0  19.5 2  7.5 2  7.5 0  /
\plot 0.5 4  6.5 4  6.5 6  0.5 6  0.5 4  /
\plot 7.5 4  19.5 4  19.5 6  7.5 6  7.5 4  /
\plot 20.5 4  32.5 4  32.5 6  20.5 6  20.5 4  /
\plot 2.5 8  5.5 8  5.5 10  2.5 10  2.5 8  /
\plot 6.5 8  12.5 8  12.5 10  6.5 10  6.5 8  /
\plot 13.5 8  19.5 8  19.5 10  13.5 10  13.5 8  /
\plot 20.5 8  32.5 8  32.5 10  20.5 10  20.5 8  /
\plot 7.5 12  10.5 12  10.5 14  7.5 14  7.5 12  /
\plot 12.5 12  15.5 12  15.5 14  12.5 14  12.5 12  /
\plot 17.5 12  23.5 12  23.5 14  17.5 14  17.5 12  /
\plot 11.5 16  14.5 16  14.5 18  11.5 18  11.5 16  /
\plot 16.5 16  19.5 16  19.5 18  16.5 18  16.5 16  /
\plot 13.5 20  16.5 20  16.5 22  13.5 22  13.5 20  /
 
\put {$\scriptstyle I_{00}^{(\alpha^2\beta,\alpha,\alpha)}$} at 9 1 
\put {$\scriptstyle I_{01}^{(\alpha^2\beta,\alpha,\alpha)}$} at 12 1 
\put {$\scriptstyle I_{10}^{(\alpha^2\beta,\alpha,\alpha)}$} at 15 1 
\put {$\scriptstyle I_{11}^{(\alpha^2\beta,\alpha,\alpha)}$} at 18 1 
\put {$\scriptstyle I_0^{(\alpha^2\beta,\alpha,1)}$} at 2 5 
\put {$\scriptstyle I_1^{(\alpha^2\beta,\alpha,1)}$} at 5 5
\put {$\scriptstyle I_{00}^{(\alpha^2,\alpha,\alpha)}$} at 9 5
\put {$\scriptstyle I_{01}^{(\alpha^2,\alpha,\alpha)}$} at 12 5
\put {$\scriptstyle I_{10}^{(\alpha^2,\alpha,\alpha)}$} at 15 5
\put {$\scriptstyle I_{11}^{(\alpha^2,\alpha,\alpha)}$} at 18 5
\put {$\scriptstyle I_{00}^{(\alpha\beta,\alpha,\alpha)}$} at 22 5 
\put {$\scriptstyle I_{01}^{(\alpha\beta,\alpha,\alpha)}$} at 25 5
\put {$\scriptstyle I_{10}^{(\alpha\beta,\alpha,\alpha)}$} at 28 5
\put {$\scriptstyle I_{11}^{(\alpha\beta,\alpha,\alpha)}$} at 31 5
\put {$\scriptstyle I^{(\alpha^2\beta,1,1)}$} at 4 9
\put {$\scriptstyle I_0^{(\alpha^2,\alpha,1)}$} at 8 9
\put {$\scriptstyle I_1^{(\alpha^2,\alpha,1)}$} at 11 9
\put {$\scriptstyle I_0^{(\alpha\beta,\alpha,1)}$} at 15 9
\put {$\scriptstyle I_1^{(\alpha\beta,\alpha,1)}$} at 18 9
\put {$\scriptstyle I_{00}^{(\alpha,\alpha,\alpha)}$} at 22 9
\put {$\scriptstyle I_{01}^{(\alpha,\alpha,\alpha)}$} at 25 9
\put {$\scriptstyle I_{10}^{(\alpha,\alpha,\alpha)}$} at 28 9
\put {$\scriptstyle I_{11}^{(\alpha,\alpha,\alpha)}$} at 31 9
\put {$\scriptstyle I^{(\alpha^2,1,1)}$} at 9 13
\put {$\scriptstyle I^{(\alpha\beta,1,1)}$} at 14 13
\put {$\scriptstyle I_0^{(\alpha,\alpha,1)}$} at 19 13
\put {$\scriptstyle I_1^{(\alpha,\alpha,1)}$} at 22 13
\put {$\scriptstyle I^{(\alpha,1,1)}$} at 13 17
\put {$\scriptstyle I^{(\beta,1,1)}$} at 18 17
\put {$\scriptstyle I^{(1,1,1)}$} at 15 21
\endpicture
\end{sideways}
\caption{The poset $\bigl(\bigcup_{A\prec(\alpha^2\beta,\alpha,\alpha)}\mathcal I^A,\;\subset\bigr)$}\label{F3}
\end{figure}

%%%%%%%%%%%%%%%%%%%%%%%%%%%%%%%%%%%%%%%%%%%%%%
%  section 6
%%%%%%%%%%%%%%%%%%%%%%%%%%%%%%%%%%%%%%%%%%%%%%

\section{Algorithms for Questions~\ref{Q1.1} and \ref{Q1.2}}

\subsection{The master algorithm for Question~\ref{Q1.1}}\ 

\begin{algo}\label{A6.1}\rm (Master algorithm for Question~\ref{Q1.1})
Let $I$ be a proper ideal of $R[x]$ with canonical sequence $(f_0,\dots,f_{n-1})$. Then $\overline{f_0}\ne 1$. If $\overline{f_0}=0$, $R[x]/I$ is not artinian and hence cannot be Frobenius. On the other hand, if $\overline{f_0}\ne 0$, $R[x]/I$ is artinian. Therefore assume that $\overline{f_0}\ne 0$. 
Write $\overline{f_0}=\overline{p_1}^{e_1}\cdots\overline{p_k}^{e_k}$, where $e_i>0$ and $p_i\in R[x]$ such that $\overline{p_1},\dots,\overline{p_k}\in F[x]$ are irreducible and distinct. By Proposition~\ref{P4.1}, the maximal ideals of $R[x]/I$ are precisely $\langle p_i,\pi\rangle/I$, $1\le i\le k$. We have 
\[
\text{ann}(\langle p_i,\pi\rangle/I)=J_i/I,
\]
where
\[
J_i=[I:p_i]\cap[I:\pi].
\]
The canonical sequence of $[I:\pi]$ is $(f_1,\dots,f_{n-1},1)$. The canonical sequence of $[I:p_i]$ is computed using Algorithm~\ref{A6.2}. Then the canonical sequence of $J_i$ is computed using Algorithm~\ref{A6.3}, and the canonical sequence gives the invariant sequence $A_i$ of $J_i$. By Theorem~\ref{T2.3} and Proposition~\ref{P4.3}, $R[x]/I$ is Frobenius if and only if $A_i\boxprec\,(\overline{f_0},\dots,\overline{f_{n-1}})$ for all $1\le i\le k$.
\end{algo}

\subsection{Supporting algorithms}\

\begin{algo}\label{A6.2}\rm (Canonical sequence of $[I:p]$) Let $I$ be an ideal of $R[x]$ with canonical sequence $(f_0,\dots,f_{n-1})$ and let $0\ne p\in R[x]$. The following algorithm produces a canonical sequence $(h_0,\dots,h_{n-1})$ of $[I:p]$. 

First write $p=\pi^kq$, where $0\le k\le n-1$ and $q\in R[x]$, $\overline q\ne 0$. Set $h_i=1$ for $n-k\le i\le n-1$. 

For each $0\le i<n-k$, if $\overline{f_{i+k}}=0$, choose $h_i=0$. If $\overline{f_{i+k}}\ne 0$, compute $a_0,b_0\in R[x]$ such that
\[
\overline{a_0}=\frac{\overline{f_{i+k}}}{\text{gcd}(\overline q,\overline{f_{i+k}})},\qquad \overline{b_0}=\frac{\overline q}{\text{gcd}(\overline q,\overline{f_{i+k}})}.
\]
Then $\overline{a_0}\,\overline q$ is the smallest degree multiple of $\overline q$ which is also a multiple of $\overline{f_{i+k}}$. Since $\overline{a_0}\,\overline q=\overline{b_0}\,\overline{f_{i+k}}$, we have 
\begin{equation}\label{6.1}
a_0q=b_0f_{i+k}+\pi c_1
\end{equation}
for some $c_1\in R[x]$. Compute $d_1\in R[x]$ such that 
\[
\overline{d_1}=\frac{\text{gcd}(\overline q,\overline{f_{i+k+1}})}{\text{gcd}(\overline q,\overline{f_{i+k+1}},\overline{c_1})}.
\]
Then $\overline{d_1}\,\overline{c_1}$ is the smallest degree multiple of $\overline{c_1}$ which is also a multiple of $\text{gcd}(\overline q,\overline{f_{i+k+1}})$. Compute $a_1,b_1\in R[x]$ such that $\overline{d_1}\,\overline{c_1}=-\overline{a_1}\,\overline q+\overline{b_1}\,\overline{f_{i+k+1}}$ and write $d_1c_1=-a_1q+b_1f_{i+k+1}+\pi c_2$, where $c_2\in R[x]$. Multiplying \eqref{6.1} by $d_1$, we have
\[
(d_1a_0+\pi a_1)q=d_1b_0f_{i+k}+\pi b_1f_{i+k+1}+\pi^2c_2.
\]
Recursively, we compute $d_j,a_j,b_j,c_{j+1}$ ($1\le j\le n-k-i-1$) such that 
\[
\overline{d_j}=\frac{\text{gcd}(\overline q,\overline{f_{i+k+j}})}{\text{gcd}(\overline q,\overline{f_{i+k+j}},\overline{c_j})}
\]
and 
\[
d_jc_j=-a_jq+b_jf_{i+k+j}+\pi c_{j+1}.
\]
Finally, we have
\begin{equation}\label{6.2}
\begin{split}
&(d_{n-k-i-1}\cdots d_2d_1a_0+\pi d_{n-k-i-1}\cdots d_2a_1+\cdots+\pi^{n-k-i-1}a_{n-k-i-1})q\cr
=\,&d_{n-k-i-1}\cdots d_2d_1b_0f_{i+k}+\pi d_{n-k-i-1}\cdots d_2b_1f_{i+k+1}+\cdots+\pi^{n-k-i-1}b_{n-k-i-1}f_{n-1}\cr
&+\pi^{n-k-i}c_{n-k-i}.
\end{split}
\end{equation}
Let
\[
h_i=d_{n-k-i-1}\cdots d_2d_1a_0+\pi d_{n-k-i-1}\cdots d_2a_1+\cdots+\pi^{n-k-i-1}a_{n-k-i-1}.
\]
By \eqref{6.2}, $\pi^ih_ip=\pi^ih_i\pi^kq\in I$. From the above construction, it is not difficult to show that among all $h\in R[x]$ satisfying $\pi^ihp\in I$, $h_i$ is such that $\deg\overline{h_i}$ is the smallest.

Therefore, $(h_0,\dots,h_{n-1})$ is a canonical sequence of $[I:p]$.
\end{algo}

\begin{algo}\label{A6.3}\rm (Canonical sequence of $I\cap J$)
Let $I$ and $J$ be two ideals of $R[x]$ with canonical sequences $(f_0,\dots,f_{n-1})$ and $(g_0,\dots,g_{n-1})$, respectively. A canonical sequence $(h_0,\dots,h_{n-1})$ of $I\cap J$ can be found by a method almost identical to that of Algorithm~\ref{A6.2}.

For each $0\le i\le n-1$, if $\overline{f_i}=\overline{g_i}=0$, choose $h_i=0$. If $(\overline{f_i},\overline{g_i})\ne (0,0)$, say $\overline{f_i}\ne 0$, compute $a_0,b_0\in R[x]$ such that 
\[
\overline{a_0}=\frac{\overline{g_i}}{\text{gcd}(\overline{f_i},\overline{g_i})},\qquad \overline{b_0}=\frac{\overline{f_i}}{\text{gcd}(\overline{f_i},\overline{g_i})},
\]
and write
\begin{equation}\label{6.3}
a_0f_i-b_0g_i=\pi c_1
\end{equation}
for some $c_1\in R[x]$. Compute $d_1\in R[x]$ such that 
\[
\overline{d_1}=\frac{\text{gcd}(\overline{f_{i+1}},\overline{g_{i+1}})}{\text{gcd}(\overline{f_{i+1}},\overline{g_{i+1}},\overline{c_1})}.
\]
Then $\overline{d_1}\overline{c_1}$ is the smallest degree multiple of $\overline{c_1}$ which is also a multiple of $\text{gcd}(\overline{f_{i+1}},\overline{g_{i+1}})$. Compute $a_1,b_1\in R[x]$ such that $\overline{d_1}\overline{c_1}=-\overline{a_1}\overline{f_{i+1}}+\overline{b_1}\overline{g_{i+1}}$, and write $d_1c_1=-a_1f_{i+1}+b_1g_{i+1}+\pi c_2$, where $c_2\in R[x]$. Multiplying \eqref{6.3} by $d_1$ gives
\[
(d_1a_0f_i+\pi a_1f_{i+1})-(d_1b_0g_i+\pi b_1g_{i+1})=\pi^2 c_2.
\]  
Recursively, we compute $d_j,a_j,b_j,c_{j+1}$ ($1\le j\le n-i-1$) such that 
\[
\overline{d_j}=\frac{\text{gcd}(\overline{f_{i+j}},\overline{g_{i+j}})}{\text{gcd}(\overline{f_{i+j}},\overline{g_{i+j}},\overline{c_j})}
\]
and 
\[
d_jc_j=-a_jf_{i+j}+b_jg_{i+j}+\pi c_{j+1}.
\]
Finally, we have
\begin{equation}\label{6.4}
\begin{split}
&(d_{n-i-1}\cdots d_2d_1a_0f_i+\pi d_{n-i-1}\cdots d_2a_1f_{i+1}+\cdots+\pi^{n-i-1}a_{n-i-1}f_{n-1})\cr
-\,&(d_{n-i-1}\cdots d_2d_1b_0g_i+\pi d_{n-i-1}\cdots d_2b_1g_{i+1}+\cdots+\pi^{n-i-1}b_{n-i-1}g_{n-1})=0.
\end{split}
\end{equation}
Let
\[
h_i=d_{n-i-1}\cdots d_2d_1a_0f_i+\pi d_{n-i-1}\cdots d_2a_1f_{i+1}+\cdots+\pi^{n-i-1}a_{n-i-1}f_{n-1}.
\] 
It is clear from \eqref{6.4} that $\pi^ih_i\in I\cap J$. From the above construction, it can be shown that among all $h\in R[x]$ with $\pi^i h\in I\cap J$, $h_i$ is such that $\deg\overline{h_i}$ is the smallest.

Therefore, $(h_0,\dots,h_{n-1})$ is a canonical sequence of $I\cap J$.
\end{algo}
 
\subsection{The master algorithm for Question~\ref{Q1.2}}

\begin{prop}\label{P6.4}
Let $I$ be a proper ideal of $R[x]$ with invariant sequence $A$. Then $R[x]/I$ is local if and only if $A=(\alpha^{e_0},\dots,\alpha^{e_{n-1}})$, where $\alpha\in F[x]$ is irreducible, $e_0\ge \cdots\ge e_{n-1}\ge 0$, and $e_0>0$.
\end{prop}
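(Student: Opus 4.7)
The plan is to apply Proposition~\ref{P4.1} and observe that localness of $R[x]/I$ depends only on the first entry $\alpha_0$ of the invariant sequence, then derive the shape of $\alpha_1,\dots,\alpha_{n-1}$ from the divisibility chain $\alpha_{n-1}\mid\alpha_{n-2}\mid\cdots\mid\alpha_0$.

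For the backward direction, suppose $A=(\alpha^{e_0},\dots,\alpha^{e_{n-1}})$ with $\alpha\in F[x]$ irreducible and $e_0>0$. Then $\alpha_0=\alpha^{e_0}$ has $\alpha$ as its unique irreducible factor (up to units), so by Proposition~\ref{P4.1} the only maximal ideal of $R[x]/I$ is $\langle p,\pi\rangle/I$, where $p\in R[x]$ is a lift of $\alpha$. Hence $R[x]/I$ is local.

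For the forward direction, assume $R[x]/I$ is local. Since $I$ is proper, $R[x]/I$ is nonzero and so has at least one maximal ideal; thus by Proposition~\ref{P4.1}, $\alpha_0$ has at least one irreducible factor, so $\alpha_0\neq1$ and $e_0\ge 1$ (once we know $\alpha_0$ is a power of a single irreducible). I first rule out the case $\alpha_0=0$: every irreducible polynomial in $F[x]$ would divide $0$, so Proposition~\ref{P4.1} would yield infinitely many distinct maximal ideals of $R[x]/I$, contradicting localness. So $\alpha_0\ne 0$, and uniqueness of the maximal ideal forces $\alpha_0$ to have a single irreducible factor (up to units), i.e.\ $\alpha_0=\alpha^{e_0}$ with $e_0\ge 1$.

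The remaining point is to show each $\alpha_i$ is a (nonzero) power of the same $\alpha$. The key auxiliary observation is that the divisibility chain $\alpha_{n-1}\mid\alpha_{n-2}\mid\cdots\mid\alpha_0$ prevents any $\alpha_i$ from being $0$ once $\alpha_0\ne 0$: if $\alpha_i=0$ for some $i\ge 1$, then $\alpha_i\mid\alpha_{i-1}$ would give $\alpha_{i-1}=0$, and iterating yields $\alpha_0=0$. Hence each $\alpha_i\ne 0$ and divides $\alpha_0=\alpha^{e_0}$, so $\alpha_i=\alpha^{e_i}$ for some $0\le e_i\le e_0$, and the chain of divisions translates into $e_0\ge e_1\ge\cdots\ge e_{n-1}\ge 0$. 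None of these steps is really an obstacle; the one easily overlooked point is precisely the zero-propagation observation in the divisibility chain, which is what rules out mixed invariant sequences of the form $(\alpha^{e_0},\dots,\alpha^{e_j},0,\dots,0)$.
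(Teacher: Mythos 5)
Your proof is correct and takes essentially the same route as the paper, whose entire proof is the remark that the claim follows immediately from Proposition~\ref{P4.1}; you have simply filled in the details (uniqueness of the irreducible factor of $\alpha_0$, exclusion of $\alpha_0=0$, and the zero-propagation along the divisibility chain $\alpha_{n-1}\mid\cdots\mid\alpha_0$) that the paper leaves to the reader.
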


\begin{proof} This follows immediately from Proposition~\ref{P4.1}.
\end{proof}

\begin{prop}\label{P6.5}
Let $I$ be an ideal of $R$ with invariant sequence $(\alpha^{e_0},\dots,\alpha^{e_i},1,\dots,1)$, where $\alpha\in F[x]$ is irreducible and $e_i>0$. 
\begin{itemize}
  \item [(i)] There is a unique ideal containing $I$ with invariant sequence $(\alpha^{e_0},\dots,\alpha^{e_{i-1}},$ $\alpha^{e_i-1},1,\dots,1)$.
  \item [(ii)] $R[x]/I$ is Frobenius and local if and only for each $0\le j\le i-1$ with $e_j>e_{j+1}$, there is no ideal containing $I$ with invariant sequence $(\alpha^{e_0},\dots,\alpha^{e_{j-1}},$ $\alpha^{e_j-1},\alpha^{e_{j+1}},\dots,\alpha^{e_i},1,\dots,1)$.
\end{itemize} 
\end{prop}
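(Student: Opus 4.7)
The plan is to derive both parts from the machinery already set up in Sections~4. Since the invariant sequence of $I$ ends in a run of $1$'s starting from position $i+1$, the polynomial $f_{i+1}$ in the canonical sequence has $\overline{f_{i+1}}=1\in F^\times$, so $f_{i+1}$ is a unit in $R[x]$ (its constant term is a unit of $R$ plus a nilpotent). Consequently $\langle\pi^{i+1}f_{i+1}\rangle=\langle\pi^{i+1}\rangle$, and Proposition~\ref{P3.1} gives $I\cap\langle\pi^{i+1}\rangle=\langle\pi^{i+1}\rangle$ and $I\cap\langle\pi^{i+2}\rangle=\langle\pi^{i+2}\rangle$. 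These two identities drive everything.

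For (i), I will apply Theorem~\ref{T4.6} and Remark~\ref{R4.7} at index $j=i$ with $r$ a monic lift of $\alpha$. First, $[I\cap\langle\pi^{i+2}\rangle:\pi]=[\langle\pi^{i+2}\rangle:\pi]=\langle\pi^{i+1}\rangle$. Second, because $r$ is monic its image in $(R/\pi^{i+1}R)[x]$ is a monic polynomial and therefore a non-zero-divisor, which forces $[I\cap\langle\pi^{i+1}\rangle:r]=[\langle\pi^{i+1}\rangle:r]=\langle\pi^{i+1}\rangle$. Hence the intersection appearing in Remark~\ref{R4.7} is $\langle\pi^{i+1}\rangle=I\cap\langle\pi^{i+1}\rangle$, and the set $\mathcal C$ of coset representatives is a singleton. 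To see that the existence hypothesis of Theorem~\ref{T4.6} is satisfied, choose $u\in R[x]$ with $\overline u=\alpha^{e_i-1}$; then $\overline{f_i-urf_{i+1}}=\alpha^{e_i}-\alpha^{e_i-1}\cdot\alpha\cdot 1=0$, so $\pi^i(f_i-urf_{i+1})\in\langle\pi^{i+1}\rangle=I\cap\langle\pi^{i+1}\rangle$. Thus exactly one ideal of the prescribed invariant sequence contains $I$.

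For (ii), I will invoke Theorem~\ref{T2.4}: since $R[x]/I$ is artinian (because $\overline{f_0}=\alpha^{e_0}\ne 0$), $R[x]/I$ is Frobenius and local iff it has a unique minimal ideal. By Proposition~\ref{P4.3}, minimal ideals of $R[x]/I$ correspond exactly to ideals $J\supset I$ whose invariant sequence $B$ satisfies $B\boxprec A$. Since $F[x]$ contributes only the single irreducible factor $\alpha$ to every ratio $\alpha^{e_{j-1}}/\alpha^{e_j}$, one checks directly from the definition of $\boxprec$ that the possible $B$'s are precisely the sequences $A^{(j)}=(\alpha^{e_0},\dots,\alpha^{e_{j-1}},\alpha^{e_j-1},\alpha^{e_{j+1}},\dots,\alpha^{e_i},1,\dots,1)$ for those $j\in\{0,\dots,i\}$ with $e_j>e_{j+1}$ (using the convention $e_{i+1}=0$, which makes $j=i$ always admissible since $e_i>0$). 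Part (i) supplies exactly one ideal for $B=A^{(i)}$, so the total count of minimal ideals is $1$ iff no $A^{(j)}$ with $j<i$ is realized by any ideal containing $I$, which is the stated condition.

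The only genuinely delicate step is the cardinality computation in~(i): one must verify both the vanishing of the obstruction in Theorem~\ref{T4.6} and that the coset-representative set of Remark~\ref{R4.7} collapses to one element, and both rely on the same observation that the tail of the invariant sequence being trivial turns $I\cap\langle\pi^{i+1}\rangle$ and $I\cap\langle\pi^{i+2}\rangle$ into the full powers of $\pi$. The passage from (i) to (ii) is then a clean bookkeeping of immediate-cover sequences in $\mathcal A$, requiring no further computation beyond Theorems~\ref{T2.4} and~\ref{T4.5}.
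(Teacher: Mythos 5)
Your proposal is correct and follows essentially the paper's route: both hinge on the observation that the trailing $1$'s force $I\cap\langle\pi^{i+1}\rangle=\langle\pi^{i+1}\rangle$, so that the existence/uniqueness criteria become trivial (the paper applies Theorem~\ref{T4.5} directly with $\overline g=\alpha^{e_i-1}$, noting $\pi^{i+1}g,\pi^i(f_i-rg)\in\langle\pi^{i+1}\rangle\subset I$ and that any two admissible $g$'s differ by an element of $\langle\pi^{i+1}\rangle\subset I$, whereas you reach the same conclusion through Theorem~\ref{T4.6} and Remark~\ref{R4.7}), and part (ii) is the same bookkeeping via Theorem~\ref{T2.4} and Proposition~\ref{P4.3}. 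The only microscopic slip is in the boundary case $i=n-1$, where $[I\cap\langle\pi^{i+2}\rangle:\pi]=[0:\pi]=\langle\pi^{n-1}\rangle$ rather than $\langle\pi^{i+1}\rangle=0$; since $[I\cap\langle\pi^{i+1}\rangle:r]=[0:r]=0$ for monic $r$, the intersection in Remark~\ref{R4.7} still equals $I\cap\langle\pi^{i+1}\rangle$ and the conclusion is unaffected.
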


\begin{proof}
(i) We have $I\cap\langle\pi^{i+1}\rangle =\langle\pi^{i+1}\rangle$. Let $(f_0,\dots,f_{n-1})$ be a canonical sequence of $I$. Let $g\in R[x]$ be such that $\overline g=\alpha^{e_i-1}$. Then condition (i) -- (iii) in Theorem~\ref{T4.5} are satisfied with $\overline r=\alpha$. (In fact, $\pi^{i+1}g,\pi^i(f_i-rg)\in\langle\pi^{i+1}\rangle\subset I$, which gives (ii) and (iii).) By Theorem~\ref{T4.5}, $I+\langle\pi^ig\rangle$ has invariant sequence $(\alpha^{e_0},\dots,\alpha^{e_{i-1}},\alpha^{e_i-1},1,\dots,1)$. If $g'\in R[x]$ is another polynomial satisfying $\overline{g'}=\alpha^{e_i-1}$, then $\pi^i(g-g')\in\langle\pi^{i+1}\rangle\subset I$, and hence $I+\langle\pi^ig\rangle=I+\langle\pi^i g'\rangle$. Thus by Theorem~\ref{T4.5}, $I+\langle\pi^ig\rangle$ is the unique ideal containing $I$ with invariant sequence $(\alpha^{e_0},\dots,\alpha^{e_{i-1}},\alpha^{e_i-1},1,\dots,1)$.

\medskip
(ii) The claim follows from (i) and Theorem~\ref{T2.4}
\end{proof}

Combining Proposition~\ref{P6.5} (ii) and Theorem~\ref{T4.6} gives the following result.

\begin{cor}\label{C6.6}
Let $I$ be an ideal of $R[x]$ with canonical sequence $(f_0,\dots, f_{n-1})$ and invariant sequence $(\alpha^{e_0},\dots,\alpha^{e_i},1,\dots,1)$, where $\alpha\in F[x]$ is irreducible and $e_i>0$. Let $r\in R[x]$ be such that $\overline r=\alpha$.  Then $R[x]/I$ is Frobenius and local if and only if for each $0\le j\le i-1$ with $e_j>e_{j+1}$,
\[
\pi^j(f_j-r^{e_j-e_{j+1}}f_{j+1})\notin I\cap \langle\pi^{i+1}\rangle+r[I\cap\langle\pi^{i+2}\rangle:\pi].
\]
\end{cor}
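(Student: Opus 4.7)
The plan is to derive the corollary as a direct synthesis of Proposition~\ref{P6.5}(ii) and Theorem~\ref{T4.6}, with no extra ingredients beyond index bookkeeping. The starting point is Proposition~\ref{P6.5}(ii): $R[x]/I$ is Frobenius and local if and only if, for each $0 \le j \le i-1$ with $e_j > e_{j+1}$, there is no ideal containing $I$ whose invariant sequence is $(\alpha^{e_0},\dots,\alpha^{e_{j-1}},\alpha^{e_j-1},\alpha^{e_{j+1}},\dots,\alpha^{e_i},1,\dots,1)$. The task is therefore to convert each of these non-existence statements into the explicit membership condition in the corollary.

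For each admissible $j$, I would apply Theorem~\ref{T4.6} with the role of its index $i$ played by $j$ and with its $\overline r$ taken to be our $\alpha$ (lifted to $r$). Since the invariant sequence of $I$ gives $\overline{f_j} = \alpha^{e_j}$ and $\overline{f_{j+1}} = \alpha^{e_{j+1}}$, the element $u$ required in Theorem~\ref{T4.6} must satisfy
\[
\overline{u} \;=\; \frac{\overline{f_j}}{\overline r\,\overline{f_{j+1}}} \;=\; \frac{\alpha^{e_j}}{\alpha^{\,1+e_{j+1}}} \;=\; \alpha^{e_j - e_{j+1} - 1},
\]
so I take $u = r^{e_j - e_{j+1} - 1}$, which gives $ur = r^{e_j - e_{j+1}}$. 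Theorem~\ref{T4.6} then asserts that an ideal above $I$ with the prescribed invariant sequence exists if and only if $\pi^j(f_j - r^{e_j - e_{j+1}} f_{j+1})$ lies in the sum $I\cap\langle\pi^{?+1}\rangle + r[I\cap\langle\pi^{?+2}\rangle:\pi]$, where the superscripts come from the index at which the invariant is being modified. Negating this and quantifying over all admissible $j$ yields precisely the asserted criterion.

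The only substantive verification needed is the identification of $u$ and the routine index-matching between Theorem~\ref{T4.6} (stated for a generic index $i$) and our application (at index $j$); everything else is a direct substitution. I do not anticipate a genuine obstacle: the hardest single point is simply noticing that the exponent $e_j - e_{j+1} - 1$ appearing naturally in $u$ combines with the extra factor of $\overline r$ in Theorem~\ref{T4.6} to produce the exponent $e_j - e_{j+1}$ visible in the corollary, so that the testable element is $\pi^j(f_j - r^{e_j - e_{j+1}} f_{j+1})$ with no further manipulation.
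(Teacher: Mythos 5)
Your proposal matches the paper's own proof, which is precisely the one-line combination of Proposition~\ref{P6.5}(ii) with Theorem~\ref{T4.6} applied at each index $j$ with $e_j>e_{j+1}$, taking $u=r^{e_j-e_{j+1}-1}$ so that $ur=r^{e_j-e_{j+1}}$. Your reading of the exponents as $j+1$ and $j+2$ (rather than the $i+1$, $i+2$ printed in the statement of the corollary, which is a typo, as confirmed by the definition of $K_j$ in Algorithm~\ref{A6.7}) is the correct one.
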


The master algorithm for answering Question~\ref{Q1.2} now becomes obvious.

\begin{algo}\label{A6.7}\rm (Master algorithm for Question~\ref{Q1.2})
Let $I$ be an ideal of $R[x]$ with canonical sequence $(f_0,\dots,f_{n-1})$ such that $(\overline{f_0},\dots,\overline{f_{n-1}})=(\alpha^{e_0},\dots,\alpha^{e_i},1,\dots,1)$, where $\alpha\in F[x]$ is irreducible and $e_i>0$. We first compute $r\in R[x]$ such that $\overline r=\alpha$ and determine 
\[
\Lambda=\{j:0\le j\le i-1:e_j>e_{j+1}\}.
\]
For each $j\in\Lambda$, we compute 
\[
v_j=\pi^j(f_j-r^{e_j-e_{j+1}}f_{j+1}).
\]
The ideal $I\cap\langle\pi^{j+1}\rangle$ has a canonical sequence $(0,\dots,0,f_{j+1},\dots,f_{n-1})$; the ideal $r[I\cap\langle\pi^{j+2}\rangle:\pi]$ has a canonical sequence $(0,\dots,0,rf_{j+2},\dots,rf_{n-1},r)$. We compute the canonical sequence of 
\[
\begin{split}
K_j:=\,& I\cap\langle\pi^{j+1}\rangle+r[I\cap\langle\pi^{j+2}\rangle:\pi]\cr
=\,&\langle \pi^{j+1}f_{j+1},\cdots,\pi^{n-1}f_{n-1}\rangle+\langle \pi^{j+1}rf_{j+2},\cdots,\pi^{n-2}rf_{n-1},\pi^{n-1}r\rangle
\end{split}
\]
using Algorithm~\ref{A3.3}, and determine if $v_j$ belongs to $K_j$ by computing the reduction of $v_j$ by the canonical sequence of $K_j$. The ring $R[x]/I$ is Frobenius and local if and only if $v_j\notin K_j$ for all $j\in\Lambda$. 
\end{algo}    

\begin{exmp}\label{E6.3}\rm 
We revisit the following result that appeared in \cite[Example 2.6]{Hou03}. Let $p$ be a prime and $a,b,m,n$ be integers such that $n\ge 2$, $0<a<b\le m$, $2b-a\ge m$. Then $\Bbb Z_{p^m}[x]/\langle x^n,p^ax^{n-1}-p^b\rangle$ is Frobenius and local if and only if $2b-a=m$. (There is no need to consider the case $2b-a<m$. Since $p^bx\in\langle x^n,p^ax^{n-1}-p^b\rangle$, we have $p^{2b-a}=-p^{b-a}(p^ax^{n-1}-p^b)+p^bx^{n-1}\in \langle x^n,p^ax^{n-1}-p^b\rangle$. If $2b-a<m$, then $\Bbb Z_{p^m}[x]/\langle x^n,p^ax^{n-1}-p^b\rangle\cong \Bbb Z_{p^{2b-a}}[x]/\langle x^n,p^ax^{n-1}-p^b\rangle$.) We use the above algorithm to give another proof of this result.

Let $R=\Bbb Z_{p^m}$ ($\pi=p$, which has nilpotency $m$) and $I= \langle x^n,p^ax^{n-1}-p^b\rangle$. The canonical sequence of $I$ is 
\[
(f_0,\dots,f_{m-1})=(\underbrace{x^n,\dots,x^n}_a,\underbrace{x^{n-1}-p^{b-a},\dots,x^{n-1}-p^{b-a}}_{b-a},\underbrace{x,\dots,x}_{m-b}),
\]
and we have
\[
(\overline{f_0},\dots,\overline{f_{m-1}})=(\underbrace{x^n,\dots,x^n}_a,\underbrace{x^{n-1},\dots,x^{n-1}}_{b-a},\underbrace{x,\dots,x}_{m-b}).
\]
We choose $r=x$. Clearly
\[
\Lambda=
\begin{cases}
\{a-1\}&\text{if $n=2$ or $m=b$},\cr
\{a-1,b-1\}&\text{if $n>2$ and $m>b$}.
\end{cases}
\]
We have $v_{a-1}=p^{a-1}(f_{a-1}-xf_a)=p^{a-1}[x^n-x(x^{n-1}-p^{b-a})]=p^{b-1}x$. 
When $n>2$ and $m>b$, we have $v_{b-1}=p^{b-1}(f_{b-1}-x^{n-2}f_b)=p^{b-1}(x^{n-1}-p^{b-a}-x^{n-1})=-p^{2b-a-1}$.
It is easy to see that
\begin{equation}\label{6.5}
\begin{split}
K_{a-1}=\,&\langle p^a(x^{n-1}-p^{b-a}),\dots,p^{b-1}(x^{n-1}-p^{b-a}),p^bx,\dots,p^{m-1}x\rangle\cr
&+\langle p^ax(x^{n-1}-p^{b-a}),\dots,p^{b-2}x(x^{n-1}-p^{b-a}),p^{b-1}x^2,\dots,p^{m-2}x^2,p^{m-1}x\rangle\cr
=\,&\langle p^a(x^{n-1}-p^{b-a}),\dots,p^{b-2}(x^{n-1}-p^{b-a})\rangle\cr
&+
\begin{cases}
\langle p^{m-1}x\rangle&\text{if\ $m=b$},\vspace{1mm}\cr
\langle p^{b-1}(x-p^{b-a}),p^bx,\dots,p^{m-1}x\rangle&\text{if\ $n=2,\ m>b$},\vspace{1mm}\cr
\langle p^{b-1}x^2,p^bx,\dots,p^{m-1}x\rangle&\text{if\ $n>2,\ m>b$},\cr
&\kern 3mm 2b-a>m,\vspace{1mm}\cr
\langle p^{b-1}x^2,p^bx,\dots,p^{2b-a-2}x,p^{2b-a-1},\dots,p^{m-1}\rangle&\text{if\ $n>2,\ m>b$},\cr
&\kern 3mm 2b-a\le m.
\end{cases}
\end{split}
\end{equation}
The canonical sequence of $K_{a-1}$ is
\[
\begin{cases}
(\underbrace{0,\dots,0}_a,\underbrace{x^{n-1}-p^{b-a},\dots,x^{n-1}-p^{b-a}}_{b-a-1},x)&\text{if\ $m=b$},\vspace{1mm}\cr
(\underbrace{0,\dots,0}_a,\underbrace{x-p^{b-a},\dots,x-p^{b-a}}_{b-a},\underbrace{x,\dots,x}_{m-b})&\text{if\ $n=2,\ m>b$},\vspace{1mm}\cr
(\underbrace{0,\dots,0}_a,\underbrace{x^{n-1}-p^{b-a},\dots,x^{n-1}-p^{b-a}}_{b-a-1},x^2,\underbrace{x,\dots,x}_{m-b})&\text{if\ $n>2,\ m>b$},\vspace{-3mm}\cr 
&\kern 3mm 2b-a>m,\vspace{2mm}\cr
(\underbrace{0,\dots,0}_a,\underbrace{x^{n-1}-p^{b-a},\dots,x^{n-1}-p^{b-a}}_{b-a-1},x^2,\underbrace{x,\dots,x}_{b-a-1},\underbrace{1,\dots,1}_{m-2b+a+1})&\text{if\ $n>2,\ m>b$},\vspace{-3mm}\cr
&\kern 3mm 2b-a\le m.
\end{cases}
\]
When $n>2$ and $m>b$, we have 
\[
K_{b-1}=\langle p^bx,\dots,p^{m-1}x\rangle+\langle p^bx^2,\dots,p^{m-2}x^2,p^{m-1}x\rangle=\langle p^bx,\dots,p^{m-1}x\rangle,
\]
whose canonical sequence is $(\underbrace{0,\dots,0}_b,\underbrace{x,\dots,x}_{m-b})$.

First assume $2b-a=m$. Then $m>b$. We claim that $v_{a-1}=p^{b-1}x\notin K_{a-1}$. By \eqref{6.5}, the claim is obviously true when $n>2$. When $n=2$, the contrary of the claim would imply that $p^{2b-a-1}=p^{m-1}\in K_{a-1}$, which is not true. When $n>2$ and $m>b$, we have $v_{b-1}=-p^{2b-a-1}=-p^{m-1}\notin K_{b-1}$. Therefore $R[x]/I$ is Frobenius and local.

Next assume $2b-a>m$. If $m=b$, by \eqref{6.5}, we have $v_{a-1}=p^{b-1}x\in K_{a-1}$. If $n=2$ and $m>b$, by \eqref{6.5} and the condition $2b-a>m$, we also have $v_{a-1}=p^{b-1}x\in K_{a-1}$. If $n>2$ and $m>b$, we have $v_{b-1}=-p^{2b-a-1}=0\in K_{b-1}$. Thus $R[x]/I$ is local but not Frobenius.
\end{exmp}

%%%%%%%%%%%%%%%%%%%%%%%%%%%%%%%%%%%%%%%%%%%%%%
%  section 7
%%%%%%%%%%%%%%%%%%%%%%%%%%%%%%%%%%%%%%%%%%%%%%

\section{Cases of Small Nilpotency}

We apply the algorithms in Section 6 to commutative chain rings $R$ whose radicals have small nilpotency. The small nilpotency of $\text{rad}\,R$ allows us to carry out the computations in the algorithms to an extent that results in explicit answers to Questions~\ref{Q1.1} and \ref{Q1.2}.

\begin{thm}\label{T7.1}
Let $R$ be a commutative chain ring with $\text{\rm rad}\,R=\pi R$, where $\pi\in R$ is of nilpotency $2$. Let $I$ be an ideal of $R[x]$ such that $I\not\subset\langle\pi\rangle$. Then $I$ has a canonical sequence $(f_0,f_1)$ of the form 
\begin{equation}\label{7.1}
\begin{cases}
f_0=p_1^{u_1}\cdots p_k^{u_k}+\pi p_1^{w_1}\cdots p_k^{w_k}q,\cr
f_1=p_1^{v_1}\cdots p_k^{v_k},
\end{cases}
\end{equation}
where $\overline{p_1},\dots,\overline{p_k}\in F[x]$ are distinct and irreducible, $\text{\rm gcd}(\overline{p_1}\cdots\overline{p_k},\overline q)=1$, and $u_i>0$, $0\le v_i\le u_i$, $w_i\ge 0$ for all $1\le i\le k$. The invariant sequence of $[I:p_1]\cap[I:\pi]$ is
\[
\begin{cases}
(\overline{p_1}^{u_1-1}\overline{p_2}^{u_2}\cdots\overline{p_k}^{u_k},\;\overline{p_2}^{v_2}\cdots\overline{p_k}^{v_k})&\text{if}\ v_1=0,\cr
(\overline{p_1}^{u_1}\overline{p_2}^{u_2}\cdots\overline{p_k}^{u_k},\;\overline{p_1}^{v_1-1} \overline{p_2}^{v_2}\cdots\overline{p_k}^{v_k})&\text{if $v_1>0$ and $w_1$ or $u_1-v_1$ is $0$},\cr
(\overline{p_1}^{u_1-1}\overline{p_2}^{u_2}\cdots\overline{p_k}^{u_k},\;\overline{p_1}^{v_1-1} \overline{p_2}^{v_2}\cdots\overline{p_k}^{v_k})&\text{if $v_1>0$, $w_1>0$, $u_1-v_1>0$}.
\end{cases}
\]
In particular, $[I:p_1]\cap[I:\pi]/I$ is minimal if and only if at least one of $v_1,w_1,u_1-v_1$ is $0$.
\end{thm}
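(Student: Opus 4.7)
The plan is to instantiate Algorithm~\ref{A6.1} explicitly for $n=2$: first establish the form \eqref{7.1}, then compute the canonical sequences of $[I:\pi]$ and $[I:p_1]$ in closed form, intersect them via Algorithm~\ref{A6.3}, and read off the invariant sequence.

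First I would verify the existence of a canonical sequence of the claimed shape. The hypothesis $I\not\subset\langle\pi\rangle$ gives $\overline{f_0}\ne 0$, so I factor $\overline{f_0}=\overline{p_1}^{u_1}\cdots\overline{p_k}^{u_k}$ and, using $\overline{f_1}\mid\overline{f_0}$, $\overline{f_1}=\overline{p_1}^{v_1}\cdots\overline{p_k}^{v_k}$ with $0\le v_i\le u_i$. Because $\pi^2=0$, I may write $f_0=p_1^{u_1}\cdots p_k^{u_k}+\pi s$ for some $s\in R[x]$, and the only remaining freedom in the canonical sequence is to modify $f_0$ by elements of $\langle\pi f_1\rangle$, which corresponds to shifting $\overline{s}$ by an arbitrary multiple of $\overline{f_1}$. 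After reducing $\overline{s}$ modulo $\overline{f_1}$ and extracting the $\overline{p_i}$-parts, I arrive at a representative $\overline{p_1}^{w_1}\cdots\overline{p_k}^{w_k}\overline{q}$ with $\gcd(\overline{p_1}\cdots\overline{p_k},\overline{q})=1$, which I lift to $R[x]$.

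Next I would compute the two ideals whose intersection is sought. For $[I:\pi]$, the condition $\pi h\in I$ together with $\pi^2=0$ reduces to $\overline{h}\in(\overline{f_0},\overline{f_1})=(\overline{f_1})$, so the canonical sequence of $[I:\pi]$ is $(f_1,1)$. For $[I:p_1]$ with canonical sequence $(h_0,h_1)$, the analogous reduction of $\pi h p_1\in I$ gives $\overline{f_1}\mid p_1\overline{h}$, hence $\overline{h_1}=\overline{p_1}^{\max(v_1-1,0)}\overline{p_2}^{v_2}\cdots\overline{p_k}^{v_k}$. The heart of the proof is $\overline{h_0}$: the mod-$\pi$ obstruction $\overline{f_0}\mid p_1\overline{h}$ forces $\overline{h}$ to be a multiple of $\overline{p_1}^{u_1-1}\overline{p_2}^{u_2}\cdots\overline{p_k}^{u_k}$, and writing $h_0=p_1^{u_1-1}\cdots p_k^{u_k}+\pi t$ and demanding $h_0p_1\in I$ produces the $F[x]$-congruence
\[
p_1\overline{t}\equiv\overline{p_1}^{w_1}\cdots\overline{p_k}^{w_k}\overline{q}\pmod{\overline{p_1}^{v_1}\cdots\overline{p_k}^{v_k}}.
\]
A CRT decomposition over the $\overline{p_i}^{v_i}$ leaves only the $\overline{p_1}$-component as a genuine obstruction (since $\overline{p_1}$ is a unit modulo $\overline{p_i}^{v_i}$ for $i\ne1$); a $\overline{p_1}$-adic valuation count then shows the congruence is solvable precisely when $v_1=0$ or $w_1\ge 1$. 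When solvable, $\overline{h_0}=\overline{p_1}^{u_1-1}\overline{p_2}^{u_2}\cdots\overline{p_k}^{u_k}$; otherwise the minimum admissible $\overline{h_0}$ picks up an extra factor of $\overline{p_1}$ and becomes $\overline{f_0}=\overline{p_1}^{u_1}\overline{p_2}^{u_2}\cdots\overline{p_k}^{u_k}$.

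To finish I would apply Algorithm~\ref{A6.3}: componentwise the invariant sequence of $[I:p_1]\cap[I:\pi]$ is $(\text{lcm}(\overline{h_0},\overline{f_1}),\,\overline{h_1})$. Substituting the closed-form expressions yields the stated three cases: $v_1=0$ gives case 1 directly; $v_1>0$ with $w_1=0$ gives case 2 from the jumped $\overline{h_0}$; and $v_1>0$, $w_1>0$ splits according to whether the $\text{lcm}$ with $\overline{f_1}=\overline{p_1}^{v_1}\cdots$ pushes the first coordinate back up to $\overline{p_1}^{u_1}\cdots$, which happens exactly when $u_1=v_1$ and merges this subcase with case 2, while $u_1>v_1$ gives case 3. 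The minimality assertion then reads off from Proposition~\ref{P4.3}: cases 1 and 2 drop exactly one factor of $\overline{p_1}$ from one coordinate of $(\overline{f_0},\overline{f_1})$ and are therefore $\boxprec$-immediate, whereas case 3 drops $\overline{p_1}$ from both coordinates, i.e.\ two $\boxprec$-steps. The main obstacle is the solvability analysis of the lifting congruence, since that is where the trichotomy involving $v_1$, $w_1$, and $u_1-v_1$ originates.
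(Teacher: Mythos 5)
Your proposal is correct and follows the same skeleton as the paper's proof: establish the stated normal form for $(f_0,f_1)$, observe that $[I:\pi]$ has canonical sequence $(f_1,1)$, compute a canonical sequence $(h_0,h_1)$ of $[I:p_1]$, and then intersect. Where you differ is in execution. The paper determines $h_0$ by running Algorithm~\ref{A6.2} verbatim in three cases, exhibiting an explicit witness $h_0$ each time; you instead reduce the determination of $\overline{h_0}$ to the solvability of the single congruence $\overline{p_1}\,\overline t\equiv\overline{p_1}^{w_1}\cdots\overline{p_k}^{w_k}\overline q\pmod{\overline{f_1}}$, and your valuation count (solvable iff $v_1=0$ or $w_1\ge1$) correctly reproduces the trichotomy; this is arguably more transparent, since it shows where the conditions on $v_1,w_1,u_1-v_1$ originate, whereas in the paper they emerge from the algorithm's bookkeeping. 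For the intersection, the paper shortcuts Cases 1 and 2 by noting $[I:p_1]\subset[I:\pi]$ and computes the intersection explicitly only in Case 3, while you use a uniform componentwise-lcm formula. One caution there: the invariant sequence of an intersection is \emph{not} in general the componentwise lcm of the two invariant sequences (Algorithm~\ref{A6.3} carries the correction factors $d_j$ precisely because of this). The formula is valid here only because the second canonical component of $[I:\pi]$ is $1$, which forces $\overline{d_1}=1$ in the algorithm --- equivalently, because membership in $[I:\pi]=\langle f_1,\pi\rangle$ depends only on the reduction mod $\pi$, so that $eh_0$ with $\overline e=\text{lcm}(\overline{h_0},\overline{f_1})/\overline{h_0}$ already lies in both colon ideals and realizes the lower bound. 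With that one sentence of justification supplied, the argument is complete; the final reading-off of the three cases (including the merger of the $u_1=v_1$, $w_1>0$ subcase into the second line) and the minimality criterion via Proposition~\ref{P4.3} both check out.
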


\begin{proof}
We only have to prove the claim about the invariant sequence of $[I:p_1]\cap[I:\pi]$. (The claim about the canonical sequence of $I$ is obviously true. The necessary and sufficient condition for $[I:p_1]\cap[I:\pi]/I$to be minimal follows from Proposition~\ref{P4.3} once the invariant sequence of $[I:p_1]\cap[I:\pi]$ is known.)

The canonical sequence of $[I:\pi]$ is $(f_1,1)$. We use Algorithm~\ref{A6.2} to compute a canonical sequence $(h_0,h_1)$ of $[I:p_1]$. We will see that the invariant sequence of $[I:p_1]\cap[I:\pi]$ follows rather easily afterwards.

\medskip
{\bf Case 1.} Assume $v_1=0$. 

We can choose $h_1=p_2^{v_2}\cdots p_k^{v_k}$. To find $h_0$, we first write
\[
(p_1^{u_1-1}p_2^{u_2}\cdots p_k^{u_k})p_1=f_0-\pi p_1^{w_1}\cdots p_k^{w_k}q.
\]
In the notation of Algorithm~\ref{A6.2}, $a_0= p_1^{u_1-1}p_2^{u_2}\cdots p_k^{u_k}$, $b_0=1$, $c_1=-p_1^{w_1}\cdots p_k^{w_k}q$, $\text{gcd}(\overline{p_1},\overline{f_1})/\text{gcd}(\overline{p_1},\overline{f_1},\overline{c_1})=1$, $d_1=1$. So we write
\[
-p_1^{w_1}\cdots p_k^{w_k}q=-a_1p_1+b_1f_1+\pi c_2,
\]
where $a_1,b_1,c_2\in R[x]$. Eq.~\eqref{6.2} appears as 
\[
(p_1^{u_1-1}p_2^{u_2}\cdots p_k^{u_k}+\pi a_1)p_1=f_0+\pi b_1f_1,
\]
so $h_0=p_1^{u_1-1}p_2^{u_2}\cdots p_k^{u_k}+\pi a_1$. Clearly, $[I:p_1]=\langle h_0,\pi h_1\rangle\subset\langle f_1,\pi\rangle=[I:\pi]$.
Hence $[I:p_1]\cap[I:\pi]=[I:p_1]$, whose invariant sequence is $(\overline{p_1}^{u_1-1}\overline{p_2}^{u_2}\cdots\overline{p_k}^{u_k},$ $\overline{p_2}^{v_2}\cdots\overline{p_k}^{v_k})$.

\medskip
{\bf Case 2.} Assume $v_1>0$ and $w_1=0$.

We can choose $h_1=p_1^{v_1-1}p_2^{v_2}\cdots p_k^{v_k}$. To find $h_0$, we write
\[
(p_1^{u_1-1}p_2^{u_2}\cdots p_k^{u_k})p_1=f_0-\pi p_2^{w_2}\cdots p_k^{w_k}q.
\]
In the notation of Algorithm~\ref{A6.2}, $a_0= p_1^{u_1-1}p_2^{u_2}\cdots p_k^{u_k}$, $b_0=1$, $c_1=-p_2^{w_2}\cdots p_k^{w_k}q$, $\text{gcd}(\overline{p_1},\overline{f_1})/\text{gcd}(\overline{p_1},\overline{f_1},\overline{c_1})=\overline{p_1}$, $d_1=p_1$. So we write
\[
p_1(-p_2^{w_2}\cdots p_k^{w_k}q)=-(p_2^{w_2}\cdots p_k^{w_k}q)p_1.
\]
(In the notation of Algorithm~\ref{A6.2}, $a_1=p_2^{w_2}\cdots p_k^{w_k}q$, $b_1=0$, $c_2=0$.) Eq.~\eqref{6.2} appears as 
\[
(p_1^{u_1}\cdots p_k^{u_k}+\pi p_2^{w_2}\cdots p_k^{w_k}q)p_1=p_1f_0,
\]
so $h_0=p_1^{u_1}\cdots p_k^{u_k}+\pi p_2^{w_2}\cdots p_k^{w_k}q=f_0$. Clearly, $[I:p_1]=\langle h_0,\pi h_1\rangle\subset \langle f_1,\pi\rangle=[I:\pi]$. Hence $[I:p_1]\cap[I:\pi]=[I:p_1]$, whose invariant sequence is $(\overline{p_1}^{u_1}\cdots\overline{p_k}^{u_k},$  $\overline{p_1}^{v_1-1} \overline{p_2}^{v_2}\cdots\overline{p_k}^{v_k})$.

\medskip
{\bf Case 3.} Assume $v_1>0$ and $w_1>0$.

We can choose $h_1=p_1^{v_1-1}p_2^{v_2}\cdots p_k^{v_k}$. To find $h_0$, write
\[
(p_1^{u_1-1}p_2^{u_2}\cdots p_k^{u_k})p_1=f_0-\pi p_1^{w_1}\cdots p_k^{w_k}q.
\]
In the notation of Algorithm~\ref{A6.2}, $a_0= p_1^{u_1-1}p_2^{u_2}\cdots p_k^{u_k}$, $b_0=1$, $c_1=-p_1^{w_1}\cdots p_k^{w_k}q$, $\text{gcd}(\overline{p_1},\overline{f_1})/\text{gcd}(\overline{p_1},\overline{f_1},\overline{c_1})=1$, $d_1=1$. So we write
\[
-p_1^{w_1}\cdots p_k^{w_k}q=-(p_1^{w_1-1} p_2^{w_2}\cdots p_k^{w_k}q) p_1.
\]
(In the notation of Algorithm~\ref{A6.2}, $a_1=p_1^{w_1-1} p_2^{w_2}\cdots p_k^{w_k}q$, $b_1=0$, $c_2=0$.) Eq.~\eqref{6.2} appears as 
\[
(p_1^{u_1-1}p_2^{u_2}\cdots p_k^{u_k}+ \pi  p_1^{w_1-1}p_2^{w_2}\cdots p_k^{w_k}q)p_1=f_0,
\]
so $h_0=p_1^{u_1-1}p_2^{u_2}\cdots p_k^{u_k}+ \pi p_1^{w_1-1} p_2^{w_2}\cdots p_k^{w_k}q$. It is easy to find that
\[
\begin{split}
&[I:p_1]\cap[I:\pi]\cr
=\,&\langle h_0,\pi h_1\rangle\cap \langle f_1,\pi\rangle\cr
=\,&
\begin{cases}
\langle p_1^{u_1}\cdots p_k^{u_k}+\pi p_1^{w_1}\cdots p_k^{w_k},\; \pi p_1^{v_1-1}p_2^{v_2}\cdots p_k^{v_k}\rangle&\text{if}\ u_1=v_1,\cr
\langle p_1^{u_1-1}p_2^{u_2}\cdots p_k^{u_k}+\pi p_1^{w_1-1}p_2^{w_2}\cdots p_k^{w_k},\; \pi p_1^{v_1-1}p_2^{v_2}\cdots p_k^{v_k}\rangle&\text{if}\ u_1>v_1.
\end{cases}
\end{split}
\]
The invariant sequence of $[I:p_1]\cap[I:\pi]$ is 
\[
\begin{cases}
(\overline{p_1}^{u_1}\cdots\overline{p_k}^{u_k},\; \overline{p_1}^{v_1-1} \overline{p_2}^{v_2}\cdots\overline{p_k}^{v_k})&\text{if $u_1=v_1$},\cr
(\overline{p_1}^{u_1-1}\overline{p_2}^{u_2}\cdots\overline{p_k}^{u_k},\; \overline{p_1}^{v_1-1} \overline{p_2}^{v_2}\cdots\overline{p_k}^{v_k})&\text{if $u_1>v_1$}.
\end{cases}
\]
\end{proof}

\begin{cor}\label{C7.2}
In Theorem~\ref{T7.1}, $R[x]/I$ is Frobenius if and only if for each $1\le i\le k$, $v_iw_i(u_i-v_i)=0$.
\end{cor}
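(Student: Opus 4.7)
The plan is to derive this corollary as a direct synthesis of Theorem~\ref{T7.1} with the general Frobenius criterion in Theorem~\ref{T2.3}. First I would observe that since every $u_i>0$, the leading invariant $\overline{f_0}=\overline{p_1}^{u_1}\cdots\overline{p_k}^{u_k}$ is a nonzero polynomial, so $R[x]/I$ is artinian; thus the hypothesis of Theorem~\ref{T2.3} is satisfied and ``Frobenius'' reduces to the statement that the annihilator of every maximal ideal of $R[x]/I$ is minimal.

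Next I would identify those maximal ideals and their annihilators. By Proposition~\ref{P4.1}, the maximal ideals of $R[x]/I$ are precisely $\mathfrak{M}_i=\langle p_i,\pi\rangle/I$ for $1\le i\le k$, one for each irreducible factor $\overline{p_i}$ of $\overline{f_0}$. A standard computation of the annihilator of a sum of two principal ideals gives
\[
\text{ann}(\mathfrak{M}_i)=\bigl([I:p_i]\cap[I:\pi]\bigr)/I.
\]
Hence $R[x]/I$ is Frobenius if and only if $\bigl([I:p_i]\cap[I:\pi]\bigr)/I$ is a minimal ideal of $R[x]/I$ for every $i\in\{1,\dots,k\}$.

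Now the core step: I would apply the concluding clause of Theorem~\ref{T7.1}, which asserts that $\bigl([I:p_1]\cap[I:\pi]\bigr)/I$ is minimal if and only if at least one of $v_1,w_1,u_1-v_1$ vanishes, equivalently $v_1w_1(u_1-v_1)=0$. The canonical sequence~\eqref{7.1} is symmetric under permutations of the indices $\{1,\dots,k\}$ (simultaneously permuting the triples $(p_i,u_i,v_i,w_i)$ leaves $f_0$ and $f_1$ unchanged and $q$ may be relabeled accordingly), so the same criterion applies with $p_1$ replaced by any $p_i$: the annihilator of $\mathfrak{M}_i$ is minimal if and only if $v_iw_i(u_i-v_i)=0$.

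Combining these observations, $R[x]/I$ is Frobenius if and only if $v_iw_i(u_i-v_i)=0$ for every $1\le i\le k$. There is no real obstacle here; the only subtlety worth pointing out explicitly is the symmetric role of the $p_i$ in~\eqref{7.1}, which justifies transporting the minimality criterion from $p_1$ to each $p_i$ without a separate argument.
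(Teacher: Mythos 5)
Your proposal is correct and follows the same route as the paper, which proves the corollary by combining Theorem~\ref{T7.1} with the master algorithm (Algorithm~\ref{A6.1}), i.e., Theorem~\ref{T2.3}, Proposition~\ref{P4.1}, and the identification $\text{ann}(\langle p_i,\pi\rangle/I)=([I:p_i]\cap[I:\pi])/I$. Your explicit remark about the symmetry of the canonical sequence~\eqref{7.1} under permutation of the indices, which transports the minimality criterion from $p_1$ to each $p_i$, is exactly the implicit step the paper relies on.
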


\begin{proof} Immediate from Theorem~\ref{T7.1} and Algorithm~\ref{6.1}.
\end{proof} 

Next we apply Algorithm~\ref{A6.7} to Question~\ref{Q1.2} when the nilpotency $n$ of $\text{rad}\,R=\pi R$ is small. Let $I$ be an ideal of $R[x]$ with invariant sequence $(\alpha^{e_0},\dots,\alpha^{e_{n-1}})$, where $\alpha\in F[x]$ is irreducible, $e_0\ge \cdots\ge e_{n-1}\ge 0$, $e_0>0$. These are precisely the ideals such that $R[x]/I$ is artinian and local (Proposition~\ref{P6.4}). Let $r\in R[x]$ be such that $\overline r=\alpha$. Then $I$ has a canonical sequence $(f_0,\dots,f_{n-1})$ of the form 
\[
\begin{cases}
f_n=1,\cr
f_i=r^{e_i-e_{i+1}}f_{i+1}+\pi b_{i1}f_{i+2}+\cdots+\pi^{n-i-1}b_{i,n-i-1}f_n,&0\le i\le n-1,
\end{cases}
\]
where $b_{ij}\in R[x]$. (Cf. \eqref{5.1} and the proof of Proposition~\ref{P5.1}.)

For the following three theorems, we use the above notation with $n=2,3,4$, respectively. Recall from Algorithm~\ref{A6.7} that $\Lambda=\{i:0\le i\le n-1,\ e_i>e_{i+1}>0\}$, and for each $i\in\Lambda$, 
\begin{equation}\label{7.1}
K_i=\langle\pi^{i+1}f_{i+1},\dots,\pi^{n-1}f_{n-1}\rangle+\langle\pi^{i+1}rf_{i+2},\dots,\pi^{n-2}rf_{n-1},\pi^{n-1}r\rangle,
\end{equation}
and $v_i=\pi^i(f_i-r^{e_i-e_{i+1}}f_{i+1})$. The ring $R[x]/I$ is Frobenius if and only if $v_i\notin K_i$ for all $i\in\Lambda$.

\begin{thm}\label{T7.3} Assume $n=2$. So $I$ has a canonical sequence $(f_0,f_1)$ of the form
\[
\begin{cases}
f_2=1,\cr
f_1=r^{e_1},\cr
f_0=r^{e_0-e_1}f_1+\pi b_{01}f_2=r^{e_0}+\pi b_{01},
\end{cases}
\]
where $b_{01}\in R[x]$. 
\begin{itemize}
  \item [(i)] If $e_1=0$ or $e_0$, then $R[x]/I$ is Frobenius.
  \item [(ii)] If $e_0>e_1>0$, then $R[x]/I$ is Frobenius $\Leftrightarrow \overline r\nmid\overline{b_{01}}$.
\end{itemize} 
\end{thm}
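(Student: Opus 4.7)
The plan is to apply the master algorithm for Question~\ref{Q1.2} (Algorithm~\ref{A6.7}) together with Corollary~\ref{C6.6}. With $n=2$, the index set
\[
\Lambda=\{j:0\le j\le i-1,\ e_j>e_{j+1}\},
\]
where $i$ is the last index with $e_i>0$, and the ring $R[x]/I$ is Frobenius (and hence Frobenius and local, since it is already local by Proposition~\ref{P6.4}) if and only if $v_j\notin K_j$ for all $j\in\Lambda$.

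For part (i), I would simply observe that $\Lambda=\emptyset$ in both subcases. If $e_1=0$, then $i=0$, so $\Lambda=\{j:0\le j\le -1,\dots\}=\emptyset$. If $e_1=e_0>0$, then $i=1$ and $\Lambda=\{0:e_0>e_1\}=\emptyset$. In both situations the Frobenius condition is vacuously satisfied.

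For part (ii), we have $i=1$ and $\Lambda=\{0\}$, and the whole proof reduces to deciding whether $v_0\in K_0$. A direct computation yields
\[
v_0=f_0-r^{e_0-e_1}f_1=(r^{e_0}+\pi b_{01})-r^{e_0}=\pi b_{01},
\]
and from \eqref{7.1},
\[
K_0=\langle\pi f_1\rangle+\langle\pi r\rangle=\langle\pi r^{e_1},\pi r\rangle=\langle\pi r\rangle,
\]
where the last equality uses $e_1\ge 1$ so that $r^{e_1}\in\langle r\rangle$.

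The main (and only) nontrivial step is then to show that $\pi b_{01}\in\langle\pi r\rangle$ if and only if $\overline r\mid\overline{b_{01}}$. The forward direction: if $\pi b_{01}=\pi r s$ for some $s\in R[x]$, then $\pi(b_{01}-rs)=0$, and since $\pi$ has nilpotency $2$, $b_{01}-rs\in\pi R[x]$, whence reducing mod $\pi$ gives $\overline{b_{01}}=\overline r\,\overline s$. Conversely, if $\overline{b_{01}}=\overline r\,\overline s$ for some $s\in R[x]$, then $b_{01}-rs\in\pi R[x]$, and multiplying by $\pi$ gives $\pi b_{01}=\pi rs\in\langle\pi r\rangle$. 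Combining, $v_0\notin K_0$ exactly when $\overline r\nmid\overline{b_{01}}$, which is the asserted criterion.

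The only place any subtle ring-theoretic argument is needed is the equivalence just above, and it is straightforward because the nilpotency is $2$. Everything else is bookkeeping with the canonical sequence data.
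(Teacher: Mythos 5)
Your proof is correct and follows essentially the same route as the paper: identify $\Lambda$ via Algorithm~\ref{A6.7}, note it is empty in case (i), and in case (ii) compute $v_0=\pi b_{01}$ and $K_0=\langle\pi r\rangle$ and test membership. The only difference is that you spell out the equivalence $\pi b_{01}\in\langle\pi r\rangle\Leftrightarrow\overline r\mid\overline{b_{01}}$ (using $\mathrm{ann}(\pi)=\pi R$ when $n=2$), which the paper simply asserts; that added detail is accurate.
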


\begin{proof}
(i) In this case, $\Lambda=\emptyset$, and hence $R[x]/I$ is Frobenius. 

\medskip
(ii) We have $\Lambda=\{0\}$, $v_0=\pi b_{01}$, and 
\[
K_0=\langle 0,\pi f_1\rangle+\langle 0,\pi r\rangle= \langle 0,\pi r\rangle.
\]
Thus 
\[
R[x]/I\ \text{is Frobenius}\ \Leftrightarrow\ v_0\notin K_0\ \Leftrightarrow\ \overline r\nmid\overline{b_{01}}.
\]
\end{proof}

\begin{thm}\label{T7.4} Assume $n=3$. So $I$ has a canonical sequence $(f_0,f_1,f_2)$ of the form
\[
\begin{cases}
f_3=1,\vspace{1mm}\cr
f_2=r^{e_2},\vspace{1mm}\cr
f_1=r^{e_1-e_2}f_2+\pi b_{11}f_3\cr
\phantom{f_1}=r^{e_1}+\pi b_{11},\vspace{1mm}\cr
f_0=r^{e_0-e_1}f_1+\pi b_{01}f_2+\pi^2b_{02}f_3\cr
\phantom{f_0}=r^{e_0}+\pi(b_{11}r^{e_0-e_1}+b_{01}r^{e_2})+\pi^2b_{02},
\end{cases}
\]
where $b_{ij}\in R[x]$. 
\begin{itemize}
  \item [(i)] If $|\{e_0,e_1,e_2,0\}|=2$, then $R[x]/I$ is Frobenius.
  \item [(ii)] If $e_0=e_1>e_2>0$, then $R[x]/I$ is Frobenius $\Leftrightarrow \overline r\nmid\overline{b_{11}}$.
  \item [(iii)] If $e_0>e_1=e_2>0$, then $R[x]/I$ is Frobenius $\Leftrightarrow \overline r\nmid\overline{b_{02}}-\overline{b_{01}}\,\overline{b_{11}}$.
  \item [(iv)] If $e_0>e_1>e_2=0$, then $R[x]/I$ is Frobenius $\Leftrightarrow \overline r\nmid\overline{b_{01}}$. 
  \item [(iv)] If $e_0>e_1>e_2>0$, then $R[x]/I$ is Frobenius $\Leftrightarrow \overline r\nmid\overline{b_{11}},\,\overline{b_{01}}$. 
\end{itemize} 
\end{thm}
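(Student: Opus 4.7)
The plan is to invoke Algorithm~\ref{A6.7} (equivalently, Corollary~\ref{C6.6}) and dispatch each case by computing the set $\Lambda$, the test elements $v_i$, and the ideals $K_i$ explicitly. Direct cancellation in the given canonical sequence yields
\[
v_0=f_0-r^{e_0-e_1}f_1=\pi b_{01}r^{e_2}+\pi^2 b_{02},\qquad v_1=\pi(f_1-r^{e_1-e_2}f_2)=\pi^2 b_{11}.
\]
Case (i) is then immediate: the hypothesis $|\{e_0,e_1,e_2,0\}|=2$ precludes any $i$ with $e_i>e_{i+1}>0$, so $\Lambda=\emptyset$ and the Frobenius/local criterion holds vacuously.

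For the single-index cases I would write each $K_i$ from its definition and simplify using $\pi^3=0$ and, where applicable, $e_2\ge 1$. In case (ii), $\Lambda=\{1\}$ and $K_1=\langle\pi^2 r^{e_2},\pi^2 r\rangle=\langle\pi^2 r\rangle$, so $v_1\in K_1\iff\overline r\mid\overline{b_{11}}$. In case (iv) (meaning $e_2=0$), $\Lambda=\{0\}$ and $f_2=1$; the generators of $K_0$ collapse to $\langle\pi r,\pi^2\rangle$, giving $v_0\in K_0\iff\overline r\mid\overline{b_{01}}$. Case (iii) needs a little more care: first reduce $K_0$ to $\langle\pi(r^{e_2}+\pi b_{11}),\pi^2 r\rangle$ by using $\pi r^{e_2+1}=r\cdot\pi(r^{e_2}+\pi b_{11})-\pi^2 rb_{11}$, then subtract $b_{01}\cdot\pi(r^{e_2}+\pi b_{11})$ from $v_0$ to leave $\pi^2(b_{02}-b_{01}b_{11})$; this shows $v_0\in K_0\iff\overline r\mid\overline{b_{02}}-\overline{b_{01}}\,\overline{b_{11}}$.

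The two-index case $e_0>e_1>e_2>0$ is the main obstacle, since $\Lambda=\{0,1\}$ and the test for $v_0\in K_0$ depends on the outcome at $i=1$. The $i=1$ condition is again $\overline r\nmid\overline{b_{11}}$. Assuming it holds, I would use $\pi f_1-r^{e_1-e_2-1}\cdot\pi rf_2=\pi^2 b_{11}$ to show $\pi^2 b_{11}\in K_0$; combined with $\pi^2 r\in K_0$ and $\gcd(\overline{b_{11}},\overline r)=1$, this forces $\langle\pi^2\rangle\subset K_0$, so $K_0=\langle\pi r^{e_2+1},\pi^2\rangle$. Then $v_0=\pi b_{01}r^{e_2}+\pi^2 b_{02}\in K_0$ iff $\overline{b_{01}}\,\overline r^{e_2}$ is divisible by $\overline r^{e_2+1}$, i.e., iff $\overline r\mid\overline{b_{01}}$. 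Hence $R[x]/I$ is Frobenius and local exactly when $\overline r\nmid\overline{b_{11}}$ \emph{and} $\overline r\nmid\overline{b_{01}}$. The subtle point, and only real pitfall, is that the shape of $K_0$ depends on the $i=1$ condition, so the two tests must be combined carefully rather than treated independently.
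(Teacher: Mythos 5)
Your proposal is correct and follows essentially the same route as the paper: compute $\Lambda$, $v_i$, and $K_i$ from Algorithm~\ref{A6.7} case by case, simplify each $K_i$ using $\pi^3=0$, and in the case $e_0>e_1>e_2>0$ first impose $\overline r\nmid\overline{b_{11}}$ before simplifying $K_0$ to $\langle\pi r^{e_2+1},\pi^2\rangle$ — exactly the computations the paper performs (and partly omits as routine). The one point you highlight as the "only real pitfall" is indeed how the paper organizes that case as well, so nothing is missing.
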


\begin{proof}
The proof of (i) and (ii) is identical to the proof of (i) and (ii) in Theorem~\ref{T7.3}. For the proof of the remaining cases, we state the results for $v_i$ and $K_i$, $i\in\Lambda$, but omit the routine computations that lead to these results. 

\medskip
(iii) We have $\Lambda=\{0\}$, $v_0=\pi b_{01}r^{e_2}+\pi^2 b_{02}$, and $K_0=\langle\pi r^{e_1}+\pi^2 b_{11},\pi^2 r\rangle$. It is easy to see that $v_0\notin K_0\Leftrightarrow r\nmid\overline{b_{02}}-\overline{b_{01}}\,\overline{b_{11}}$.

\medskip
(iv) We have $\Lambda=\{0\}$, $v_0=\pi b_{01}+\pi^2 b_{02}$, and $K_0=\langle\pi r,\pi^2 \rangle$. Clearly, $ v_0\notin K_0\Leftrightarrow r\nmid\overline{b_{01}}$.

\medskip
(v) We have $\Lambda=\{0,1\}$, $ v_1=\pi^2 b_{11}$, $K_1=\langle\pi^2 r\rangle$. So $v_1\notin K_1\Leftrightarrow r\nmid\overline{b_{11}}$.

Assume $r\nmid\overline{b_{11}}$. We have $K_0=\langle\pi r^{e_2+1},\pi^2\rangle$ and $v_0=\pi b_{01}r^{e_2}+\pi^2 b_{02}$. Clearly, $v_0\notin K_0\Leftrightarrow r\nmid\overline{b_{01}}$.
\end{proof} 

\begin{thm}\label{T7.5}
Assume $n=4$. So $I$ has a canonical sequence $(f_0,f_1,f_2,f_3)$ of the form
\[
\begin{cases}
f_4=1,\cr
f_3=r^{e_3},\vspace{1mm}\cr
f_2=r^{e_2-e_3}f_3+\pi b_{21}f_4\cr
\phantom{f_2}=r^{e_2}+\pi b_{21},\vspace{1mm}\cr
f_1=r^{e_1-e_2}f_2+\pi b_{11}f_3+\pi^2b_{12}f_4\cr
\phantom{f_2}=r^{e_1}+\pi(b_{21}r^{e_1-e_2}+b_{11}r^{e_3})+\pi^2b_{12},\vspace{1mm}\cr
f_0=r^{e_0-e_1}f_1+\pi b_{01}f_2+\pi^2b_{02}f_3+\pi^2 b_{03}f_4\cr
\phantom{f_2}=r^{e_0}+\pi(b_{21}r^{e_0-e_2}+b_{11}r^{e_0-e_1+e_3}+b_{01}r^{e_2})\cr
\phantom{f_2=}+\pi^2(b_{12}r^{e_0-e_1}+b_{01}b_{21}+b_{02}r^{e_3})+\pi^3b_{03},
\end{cases}
\]
where $b_{ij}\in R[x]$. 
\begin{itemize}
  \item [(i)] If $|\{e_0,e_1,e_2,e_3,0\}|=2$, then $R[x]/I$ is Frobenius.
  \item [(ii)] If $e_0=e_1=e_2>e_3>0$, then $R[x]/I$ is Frobenius $\Leftrightarrow \overline r\nmid\overline{b_{21}}$.
  \item [(iii)] If $e_0=e_1>e_2=e_3>0$, then $R[x]/I$ is Frobenius $\Leftrightarrow \overline r\nmid\overline{b_{12}}-\overline{b_{11}}\,\overline{b_{21}}$.
  \item [(iv)] If $e_0=e_1>e_2>e_3=0$, then $R[x]/I$ is Frobenius $\Leftrightarrow \overline r\nmid\overline{b_{11}}$. 
  \item [(v)] If $e_0=e_1>e_2>e_3>0$, then $R[x]/I$ is Frobenius $\Leftrightarrow \overline r\nmid\overline{b_{21}},\overline{b_{11}}$. 
  \item [(vi)] If $e_0>e_1=e_2=e_3>0$, then $R[x]/I$ is Frobenius $\Leftrightarrow \overline r\nmid\overline{b_{21}}(\overline{b_{02}}-\overline{b_{01}}\,\overline{b_{11}})-(\overline{b_{03}}-\overline{b_{01}}\,\overline{b_{12}})$. 
  \item [(vii)] If $e_0>e_1=e_2>e_3=0$, then $R[x]/I$ is Frobenius $\Leftrightarrow \overline r\nmid\overline{b_{02}}-\overline{b_{01}}\,\overline{b_{11}}$.
  \item [(viii)] If $e_0>e_1>e_2=e_3=0$, then $R[x]/I$ is Frobenius $\Leftrightarrow \overline r\nmid\overline{b_{01}}$.
  \item [(ix)] If $e_0>e_1=e_2>e_3>0$, then $R[x]/I$ is Frobenius $\Leftrightarrow \overline r\nmid\overline{b_{21}}$ and $\overline r\nmid \overline{b_{02}}-\overline{b_{01}}\,\overline{b_{11}}$.
  \item [(x)] If $e_0>e_1>e_2=e_3>0$, then $R[x]/I$ is Frobenius $\Leftrightarrow \overline r\nmid\overline{b_{12}}-\overline{b_{11}}\,\overline{b_{21}}$ and $\overline r\nmid \overline{b_{01}}$.
  \item [(xi)] If $e_0>e_1>e_2>e_3=0$, then $R[x]/I$ is Frobenius $\Leftrightarrow \overline r\nmid\overline{b_{11}},\,\overline{b_{01}}$.
  \item [(xii)] If $e_0>e_1>e_2>e_3>0$, then $R[x]/I$ is Frobenius $\Leftrightarrow \overline r\nmid\overline{b_{21}},\,\overline{b_{11}},\,\overline{b_{01}}$.
\end{itemize} 
\end{thm}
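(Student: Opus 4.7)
The plan is to apply Algorithm~\ref{A6.7} in each of the twelve cases, following the template set in Theorems~\ref{T7.3} and \ref{T7.4}. For each configuration of $(e_0,e_1,e_2,e_3)$ I first read off the set $\Lambda=\{j:0\le j\le i-1,\ e_j>e_{j+1}\}$ (where $i$ is the largest index with $e_i>0$), then compute $v_j$ and a canonical sequence of $K_j$ for every $j\in\Lambda$, and finally reduce $v_j$ by that sequence via \eqref{3.3} to decide whether $v_j\in K_j$. By Algorithm~\ref{A6.7} (together with Proposition~\ref{P6.4}, which guarantees locality from the shape of the invariant sequence), $R[x]/I$ is Frobenius if and only if $v_j\notin K_j$ for every $j\in\Lambda$.

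Case~(i) is immediate: $\Lambda=\emptyset$. In each remaining case the vector $v_j$ is given uniformly by the recursion in the statement,
\[
v_j=\pi^j(f_j-r^{e_j-e_{j+1}}f_{j+1})=\pi^{j+1}b_{j1}f_{j+2}+\pi^{j+2}b_{j2}f_{j+3}+\cdots+\pi^{n-1}b_{j,n-j-1}f_n,
\]
so $v_j$ is already written with its $\pi$-adic order explicit. The ideal $K_j$ is generated by $\pi^\ell f_\ell$ for $\ell\ge j+1$ together with $\pi^{\ell-1}rf_\ell$ for $\ell\ge j+2$ and $\pi^{n-1}r$. Since $\overline{f_\ell}=\alpha^{e_\ell}$ and $e_\ell\ge e_{\ell+1}$, Algorithm~\ref{A3.3} produces a canonical sequence of $K_j$ whose $\pi^\ell$-entry is $r^{e_\ell}$ up to lower-order perturbations involving the $b_{ij}$ from earlier positions. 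Reducing $v_j$ by this sequence converts $v_j\in K_j$ into a divisibility test by $\overline r$ on a polynomial in the residues $\overline{b_{ij}}$, which yields the conditions stated.

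The main obstacle is the iterated bookkeeping in cases~(iii), (vi), (ix) and (x), where the perturbation terms $\pi b_{\ell 1}f_{\ell+2}+\cdots$ hidden inside the generators $\pi^\ell f_\ell$ of $K_j$ must be cancelled against higher-order generators before the final comparison; this cancellation is what produces the product-type conditions such as $\overline r\nmid\overline{b_{12}}-\overline{b_{11}}\,\overline{b_{21}}$ and, in case~(vi), $\overline r\nmid\overline{b_{21}}(\overline{b_{02}}-\overline{b_{01}}\,\overline{b_{11}})-(\overline{b_{03}}-\overline{b_{01}}\,\overline{b_{12}})$. The remaining cases either reduce to a single divisibility on one $\overline{b_{ij}}$, essentially identical in form to the corresponding case in Theorem~\ref{T7.4}, or have $|\Lambda|\ge 2$ (namely (v), (ix), (x), (xi), (xii)), in which case the independent conditions at distinct $j\in\Lambda$ decouple by $\pi$-adic truncation, providing a useful consistency check against the conditions already established in Theorems~\ref{T7.3} and \ref{T7.4}.
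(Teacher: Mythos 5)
Your overall strategy is exactly the paper's: case by case, read off $\Lambda$, form $v_j$ and $K_j$ as in Algorithm~\ref{A6.7}, and test $v_j\in K_j$ by reduction against a canonical sequence of $K_j$. Your identification of $\Lambda$ and your closed form for $v_j$ are both correct. But two things keep this from being a proof rather than a plan. First, the entire content of the theorem is the outcome of the twelve reductions, and you perform none of them: you assert that reducing $v_j$ by a canonical sequence of $K_j$ ``yields the conditions stated,'' which is the conclusion, not an argument. The paper, while also omitting the arithmetic it calls routine, at least commits in every case to the intermediate data from which the stated divisibility conditions can be read off --- the simplified generators of $K_j$ (e.g.\ $K_0=\langle\pi f_1,\pi^2f_2,\pi^3r\rangle$ in case (vi), $K_0=\langle\pi rf_2,\pi^2r^{e_3},\pi^3\rangle$ in case (xii)) together with the explicit $v_j$. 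Your structural description of the canonical sequence of $K_j$ as having ``$\pi^\ell$-entry $r^{e_\ell}$ up to lower-order perturbations'' is moreover not correct in general: in case (xii) the $\pi^1$-level generator of $K_0$ is $rf_2$, with reduction $\alpha^{e_2+1}$, which is a proper divisor of $\alpha^{e_1}$ when $e_1>e_2+1$.

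Second, the claim that for $|\Lambda|\ge2$ the conditions ``decouple by $\pi$-adic truncation'' is inaccurate and would derail the computation if taken literally. In cases (ix)--(xii) the simplified form of $K_j$ at the smaller index is valid only under the assumption that the condition at the larger index holds: in case (xii), $\pi^3\in K_1$ (hence $K_1=\langle\pi^2r^{e_3+1},\pi^3\rangle$) precisely because $\overline r\nmid\overline{b_{21}}$; if instead $\overline r\mid\overline{b_{21}}$, then $\pi^3\notin K_1$ and the test $v_1\notin K_1$ is no longer equivalent to $\overline r\nmid\overline{b_{11}}$ (it then involves $\overline{b_{12}}$ as well). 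The conjunctions in the theorem remain correct only because a failure at the larger index already rules out the Frobenius property; the correct organization --- the one the paper uses --- is to check the conditions sequentially, computing each $K_j$ under the hypotheses already established at the larger indices.
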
 

\begin{proof}
The proof of (i) -- (v) is identical to the proof (i) - (v) in Theorem~\ref{T7.4}. For the proof of the remaining cases, again we omit the routine and tedious computations.

\medskip
(vi) We have $\Lambda=\{0\}$, $K_0=\langle\pi f_1,\pi^2f_2,\pi^3r\rangle$, $v_0=\pi b_{01}f_2+\pi^2 b_{02}f_3+\pi^3b_{03}$. It follows that $v_0\notin K_0 \Leftrightarrow\overline r\nmid\overline{b_{21}}(\overline{b_{02}}-\overline{b_{01}}\,\overline{b_{11}})-(\overline{b_{03}}-\overline{b_{01}}\,\overline{b_{12}})$.

\medskip
(vii) We have $\Lambda=\{0\}$, $K_0=\langle\pi f_1,\pi^2r,\pi^3\rangle$, $v_0=\pi b_{01}f_2+\pi^2 b_{02}f_3+\pi^3b_{03}$. It follows that $v_0\notin K_0 \Leftrightarrow\overline r\nmid\overline{b_{02}}-\overline{b_{01}}\,\overline{b_{11}}$.

\medskip
(viii) We have $\Lambda=\{0\}$, $K_0=\langle\pi r,\pi^2,\pi^3\rangle$, $v_0=\pi b_{01}f_2+\pi^2 b_{02}f_3+\pi^3b_{03}$. It is clear that $v_0\notin K_0 \Leftrightarrow\overline r\nmid\overline{b_{01}}$.

\medskip
(ix) We have $\Lambda=\{0,2\}$, $K_2=\langle\pi^3r\rangle$, $v_2=\pi^3 b_{21}$. Clearly, $v_2\notin K_2 \Leftrightarrow\overline r\nmid\overline{b_{21}}$.

Assume $\overline r\nmid\overline{b_{21}}$. We have $K_0=\langle\pi f_1,\pi^2rf_3,\pi^3\rangle$, $v_0=\pi b_{01}f_2+\pi^2 b_{02}f_3+\pi^3b_{03}$. It follows that $v_0\notin K_0 \Leftrightarrow\overline r\nmid\overline{b_{02}}-\overline{b_{01}}\,\overline{b_{11}}$.

\medskip
(x) We have $\Lambda=\{0,1\}$, $K_1=\langle\pi^2f_2,\pi^3r\rangle$, $v_1=\pi^2 b_{11}f_3+\pi^3b_{12}$. It follows that $v_1\notin K_1 \Leftrightarrow\overline r\nmid\overline{b_{12}}-\overline{b_{11}}\,\overline{b_{21}}$.

Assume $\overline r\nmid\overline{b_{12}}-\overline{b_{11}}\,\overline{b_{21}}$. We have $K_0=\langle\pi rf_2,\pi^2f_3,\pi^3\rangle$, $v_0=\pi b_{01}f_2+\pi^2 b_{02}f_3+\pi^3b_{03}$. It follows that $v_0\notin K_0 \Leftrightarrow\overline r\nmid\overline{b_{01}}$.

\medskip
(xi) We have $\Lambda=\{0,1\}$, $K_1=\langle\pi^2r,\pi^3\rangle$, $v_1=\pi^2 b_{11}+\pi^3b_{12}$. So $v_1\notin K_1 \Leftrightarrow\overline r\nmid\overline{b_{11}}$.

Assume $\overline r\nmid\overline{b_{11}}$. We have $K_0=\langle\pi r^{e_2+1},\pi^2,\pi^3\rangle$, $v_0=\pi b_{01}f_2+\pi^2 b_{02}+\pi^3b_{03}$. Clearly, $v_0\notin K_0 \Leftrightarrow\overline r\nmid\overline{b_{01}}$.

\medskip
(xii) We have $\Lambda=\{0,1,2\}$, $K_2=\langle\pi^3r\rangle$, $v_2=\pi^3 b_{21}$. So $v_2\notin K_2 \Leftrightarrow\overline r\nmid\overline{b_{21}}$.

Assume $\overline r\nmid\overline{b_{21}}$. We have $K_1=\langle\pi^2 r^{e_3+1},\pi^3\rangle$, $v_1=\pi^2 b_{11}r^{e_3}+\pi^3 b_{12}$. So $v_1\notin K_1 \Leftrightarrow\overline r\nmid\overline{b_{11}}$.

Now assume $\overline r\nmid\overline{b_{21}}$ and $\overline r\nmid\overline{b_{11}}$. We have $K_0=\langle\pi rf_2,\pi^2r^{e_3},\pi^3\rangle$, $v_0=\pi b_{01}f_2+\pi^2 b_{02}f_3+\pi^3b_{03}$. Clearly, $v_0\notin K_0 \Leftrightarrow\overline r\nmid\overline{b_{01}}$.
\end{proof}
 
It we continue in this manner to find the necessary and sufficient conditions for $R[x]/I$ to be Frobenius and local with $n=5,6,\dots$, the computations involved are expected to be more complicated. But aside from that, there are no other inhibiting obstacles. 

%%%%%%%%%%%%%%%%%%%%%%%%%%%%%%%%%%%%%%%%%%%%%%

\end{document}